\numberwithin{equation}{section}
\newtheorem{Theorem}{Theorem}[section]
\newtheorem{Lemma}[Theorem]{Lemma}
\newtheorem{Proposition}[Theorem]{Proposition}
 { \theoremstyle{definition}
\newtheorem{Definition}[Theorem]{Definition}
\newtheorem{Example}[Theorem]{Example}
\newtheorem{Remark}[Theorem]{Remark} }
\newcommand{\pr}[1]{#1^{\prime}}
\newcommand{\pf}{\operatorname{Pf}}
\newcommand{\tr}{{\operatorname{Tr}}}
\newcommand{\mfrak}[1]{\mathfrak{#1}}
\newcommand{\mcal}[1]{\mathcal{#1}}
\newcommand{\mbb}[1]{\mathbb{#1}}
\newcommand{\mrm}[1]{\mathrm{#1}}
\newcommand{\what}[1]{\widehat{#1}}
\begin{document}

\newcommand{\arXivNumber}{2005.02837}

\renewcommand{\PaperNumber}{008}

\FirstPageHeading

\ShortArticleName{Pfaffian Point Processes from Free Fermion Algebras}

\ArticleName{Pfaffian Point Processes from Free Fermion Algebras:\\ Perfectness and Conditional Measures}

\Author{Shinji KOSHIDA}

\AuthorNameForHeading{S.~Koshida}

\Address{Department of Physics, Faculty of Science and Engineering, Chuo University,\\ Kasuga, Bunkyo, Tokyo 112-8551, Japan}
\Email{\href{mailto:koshida@phys.chuo-u.ac.jp}{koshida@phys.chuo-u.ac.jp}}

\ArticleDates{Received July 23, 2020, in final form January 16, 2021; Published online January 26, 2021}

\Abstract{The analogy between determinantal point processes (DPPs) and free fermionic calculi is well-known. We point out that, from the perspective of free fermionic algebras, Pfaffian point processes (PfPPs) naturally emerge, and show that a positive contraction acting on a ``doubled'' one-particle space with an additional structure defines a unique PfPP. Recently, Olshanski inverted the direction from free fermions to DPPs, proposed a scheme to construct a fermionic state from a quasi-invariant probability measure, and introduced the notion of perfectness of a probability measure. We propose a method to check the perfectness and show that Schur measures are perfect as long as they are quasi-invariant under the action of the symmetric group. We also study conditional measures for PfPPs associated with projection operators. Consequently, we show that the conditional measures are again PfPPs associated with projection operators onto subspaces explicitly described.}

\Keywords{Pfaffian point process; determinantal point process; CAR algebra; quasi-free state}

\Classification{60G55; 46L53; 46L30}

\section{Introduction}
\subsection{Pfaffian point process}
In this paper, we assume that $\mfrak{X}$ is a countable set. The collection of point configurations in~$\mfrak{X}$ is identified with $\Omega=\Omega (\mfrak{X})=\{0,1\}^{\mfrak{X}}$, which is equipped with the product topology to be a~compact topological space. We regard each element $\omega\in \Omega$ as a function $\omega\colon \mfrak{X}\to\{0,1\}$ or a~collection of points $\omega=\{x_{i}\in\mfrak{X}\}_{i}$. We adopt the $\sigma$-algebra of Borel sets $\Sigma$. Then, for distinct points $x_{1},\dots, x_{n}\in \mfrak{X}$, the cylinder set
\begin{equation*}
	\Omega_{x_{1},\dots, x_{n}}=\{\omega\in\Omega\,|\,\omega (x_{1})=\cdots =\omega (x_{n})=1\}
\end{equation*}
is measurable.
Given a probability measure $M$ on $(\Omega,\Sigma)$, the $n$-point correlation function $\rho^{M}_{n}$, $n\in\mbb{N}$ is an $n$-variable symmetric function defined as the probability weight of cylinder sets:
\begin{equation*}
	\rho^{M}_{n}(x_{1},\dots, x_{n})=M(\Omega_{x_{1},\dots, x_{n}}),
\end{equation*}
where $x_{1},\dots, x_{n}\in \mfrak{X}$ are distinct. It is conventional to extend $\rho^{M}_{n}$ to a function on $\mfrak{X}^{n}$ so that it vanishes if any two points coincide.
Note that a system of correlation functions $\big\{\rho^{M}_{n}\big\}_{n\in\mbb{N}}$ determines the probability measure $M$ uniquely since the cylinder sets generate $\Sigma$.
A random variable $X$ with values in $(\Omega,\Sigma)$ is called a point process in $\mfrak{X}$. We also call a probability measure on $(\Omega,\Sigma)$ a point process not distinguishing a random variable from its distribution.

To define a Pfaffian point process, we need to fix some notations.
Suppose that a $(2\times 2)$-matrix-valued function $\mbb{K}(\cdot,\cdot)\colon \mfrak{X}\times\mfrak{X}\to M(2;\mbb{C})$ satisfying the anti-symmetry
\begin{equation}\label{eq:anti-sym_matrix_func}
	\mbb{K}(x,y)^{\mrm{T}}=-\mbb{K}(y,x),\qquad x, y\in\mfrak{X}
\end{equation}
is given. For any $n\in\mbb{N}$, we adopt the following identification of vector spaces:
\begin{align*}
	M(2;\mbb{C})\otimes M(n;\mbb{C}) & \xrightarrow{\sim} M(2n;\mbb{C}),\\
	 e_{i,j}\otimes e_{k,l}&\mapsto e_{2(k-1)+i, 2(l-1)+j},\qquad i,j=1,2,\quad k,l=1,\dots, n,
\end{align*}
where $e_{i,j}$ is the matrix with entry $1$ at the $(i,j)$-position and $0$ elsewhere.
Given points $x_{1},\dots, x_{n}\in\mfrak{X}$, we understand $[\mbb{K}(x_{i},x_{j})]_{1\le i,j\le n}$ as a $(2n\times 2n)$-matrix under the above identification and by regarding $x_{1},\dots, x_{n}$ as legs for $M(n;\mbb{C})$. The anti-symmetry (\ref{eq:anti-sym_matrix_func}) of the function $\mbb{K}(\cdot,\cdot)$ ensures the anti-symmetry of the $(2n\times 2n)$-matrix $[\mbb{K}(x_{i},x_{j})]_{1\le i,j\le n}$ so that
\begin{equation*}
[\mbb{K}(x_{i},x_{j})]_{1\le i,j\le n}^{\mrm{T}}=\big[\mbb{K}(x_{j},x_{i})^{\mrm{T}}\big]_{1\le i,j\le n}=-[\mbb{K}(x_{i},x_{j})]_{1\le i,j\le n}.
\end{equation*}
Therefore, the Pfaffian $\pf [\mbb{K}(x_{i},x_{j})]_{1\le i,j\le n}$ is defined.

\begin{Definition}
A probability measure $M$ on $(\Omega,\Sigma)$ is a Pfaffian point process (PfPP) if there exists a $(2\times 2)$-matrix-valued function $\mbb{K}^{M}(\cdot,\cdot)\colon \mfrak{X}\times\mfrak{X}\to M(2;\mbb{C})$ satisfying the anti-symmetry~(\ref{eq:anti-sym_matrix_func}) such that every $n$-point function with $n\in\mbb{N}$ admits a Pfaffian expression
\begin{equation*}
	\rho_{n}^{M}(x_{1},\dots, x_{n})=\pf \big[\mbb{K}^{M}(x_{i},x_{j})\big]_{1\le i,j\le n}.
\end{equation*}
We call the matrix-valued function $\mbb{K}^{M}(\cdot,\cdot)$ a correlation kernel of the PfPP $M$.
\end{Definition}

For a PfPP $M$, we write its correlation kernel as
\begin{equation*}
	\mbb{K}^{M}(x,y)
	=\left(
	\begin{matrix}
		\mbb{K}_{11}^{M}(x,y) & \mbb{K}_{12}^{M}(x,y) \\
		\mbb{K}_{21}^{M}(x,y) & \mbb{K}_{22}^{M}(x,y)
	\end{matrix}
	\right),\qquad x,y\in\mfrak{X}.
\end{equation*}
If, in particular, $\mbb{K}_{11}^{M}(x,y)=\mbb{K}_{22}^{M}(x,y)=0$ identically holds, we have
\begin{equation*}
	\mrm{Pf}\big[\mbb{K}^{M}(x_{i},x_{j})\big]_{1\le i,j\le n}=\det \big[\mbb{K}_{12}^{M}(x_{i},x_{j})\big]_{1\le i,j\le n}.
\end{equation*}
Therefore, we may naturally have the following definition as a special case of PfPPs.

\begin{Definition}
A PfPP $M$ on $(\Omega,\Sigma)$ is called a determinantal point process (DPP) if $\mbb{K}^{M}_{11}(x,y)\allowbreak =\mbb{K}^{M}_{22}(x,y)=0$ identically.
In this case, each correlation function admits a determinantal expression
\begin{equation*}
	\rho_{n}^{M}(x_{1},\dots, x_{n})=\det \big[K^{M}(x_{i},x_{j})\big]_{1\le i,j\le n},\qquad K^{M}(x,y)=\mbb{K}^{M}_{12}(x,y),\qquad x,y\in\mfrak{X}
\end{equation*}
We adopt the abuse of terminology to call the function $K^{M}(\cdot,\cdot)$ on $\mfrak{X}\times\mfrak{X}$ a correlation kernel of the DPP $M$.
\end{Definition}

DPPs form a significant class of point processes that have many applications to random partition and asymptotic representation theory \cite{Borodin1998a, BorodinOlshanski1998a, Borodin1998b,BorodinOlshanski1998b,Okounkov2001,Olshanski1998, Olshanski2003} (see also \cite{Borodin2011,BenHoughKrishnapurPeresVirag2006, Lyons2003, Soshnikov2000}). The analogy between DPPs and fermionic calculi is well-known \cite{Lytvynov2002,LytvynovMei2007,Olshanski2020, ShiraiTakahashi2003a, ShiraiTakahashi2003b, Spohn1987}. In particular, a quasi-free state of a certain class over an algebra of anti-commutation relations gives a DPP.
Compared to DPPs, there seem to be less attempts to unify an interplay between PfPPs and free fermions, while there are many preceding studies on concrete examples (incomplete references include \cite{BorodinRains2005, Ferrari2004, KatoriTanemura2007,Matsumoto2005, MatsumotoShirai2013,Nagao2007,Rains2000,Vuletic2007,WangLi2019}).
One of the aims of this paper is to extend a portion of the insight on DPPs found in the above mentioned literatures and to establish a framework to study PfPPs from free fermionic perspectives. Many settings, notations and problems are motivated by \cite{Olshanski2020}.

\subsection{From a positive contraction to a PfPP}
A well-known construction of a DPP on $\mfrak{X}$ \cite{Soshnikov2000} starts from an operator $K$ on $\ell^{2}(\mfrak{X})$ such that $K=K^{\ast}$ and $0\le K\le 1$, namely, a~positive contraction on~$\ell^{2}(\mfrak{X})$. Given a~positive contraction~$K$ on $\ell^{2}(\mfrak{X})$, there exists a DPP $M^{K}$ on $\mfrak{X}$ such that each correlation function is given by
\begin{equation*}
	\rho^{M^{K}}_{n}(x_{1},\dots, x_{n})=\det [K(x_{i},x_{j}) ]_{1\le i,j\le n},\quad K(x,y)=(e_{x},Ke_{y})_{\ell^{2}(\mfrak{X})}.
\end{equation*}
Here $e_{x}\in \ell^{2}(\mfrak{X})$, $x\in\mfrak{X}$ is a function defined by $e_{x}(y)=\delta_{x,y}$, $y\in\mfrak{X}$. Then the collection $\{e_{x}\}_{x\in\mfrak{X}}$ forms a complete orthonormal system of $\ell^{2}(\mfrak{X})$.

Here, we give a direct generalization of this result to PfPPs.
We define the complex conjugate~$J$ of $\ell^{2}(\mfrak{X})$ as an anti-linear operator on it that fixes each function~$e_{x}$, $x\in\mfrak{X}$.
We set $\mcal{K}=\ell^{2}(\mfrak{X})\oplus \ell^{2}(\mfrak{X})$ and take an anti-unitary involution $\Gamma$ on $\mcal{K}$ defined by
\begin{equation}
\label{eq:Gamma}
	\Gamma=\left(
	\begin{matrix}
	0 & J \\
	J & 0
	\end{matrix}
	\right)
\end{equation}
according to the prescribed direct sum decomposition.
For this pair $(\mcal{K},\Gamma)$, we consider the collection of operators
\begin{equation}
\label{eq:collection_covariance_ops}
	\mcal{Q}(\mcal{K},\Gamma)=\big\{S\in\mfrak{B}(\mcal{K})\,|\,0\le S=S^{\ast}\le 1,\, \overline{S}=1-S\big\},
\end{equation}
where $\mfrak{B}(\mcal{K})$ is the set of bounded operators on $\mcal{K}$, and for any operator $A\in\mfrak{B}(\mcal{K})$, we write $\overline{A}:=\Gamma A\Gamma$.
We will show that each operator $S\in\mcal{Q}(\mcal{K},\Gamma)$ uniquely determines a PfPP.

\begin{Proposition}\label{prop:existence_ppp}
Let us take $S\in\mcal{Q}(\mcal{K},\Gamma)$ and write it as
\begin{equation*}
	S=\left(
	\begin{matrix}
		S_{11} & S_{12} \\
		S_{21} & S_{22}
	\end{matrix}
	\right).
\end{equation*}
There exists a PfPP $M^{S}$ on $\mfrak{X}$ such that each correlation function is given by
\begin{equation*}
	\rho^{M^{S}}_{n}(x_{1},\dots, x_{n})=\mrm{Pf} [\mbb{K}_{S}(x_{i},x_{j}) ]_{1\le i,j\le n},
\end{equation*}
where
\begin{equation}
\label{eq:matrix_kernel_S}
	\mbb{K}_{S}(x,y)=\left(
	\begin{matrix}
		(e_{x},S_{21}e_{y})_{\ell^{2}(\mfrak{X})} & (e_{x},S_{22}e_{y})_{\ell^{2}(\mfrak{X})} \\
		(e_{x},(S_{11}-1)e_{y})_{\ell^{2}(\mfrak{X})} & (e_{x},S_{12}e_{y})_{\ell^{2}(\mfrak{X})}
	\end{matrix}
	\right),\qquad x,y\in\mfrak{X}.
\end{equation}
\end{Proposition}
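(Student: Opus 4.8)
The plan is to realize $S$ as the covariance of a quasi-free state on the self-dual CAR algebra attached to the pair $(\mcal{K},\Gamma)$ and then to extract the point process from the occupation observables. Let $\mcal{C}(\mcal{K},\Gamma)$ denote the self-dual CAR algebra generated by elements $B(f)$, $f\in\mcal{K}$, depending linearly on $f$ and subject to $B(f)^{\ast}=B(\Gamma f)$ together with $\{B(f)^{\ast},B(g)\}=(f,g)_{\mcal{K}}$. The conditions defining $\mcal{Q}(\mcal{K},\Gamma)$ are exactly what is needed: $0\le S=S^{\ast}\le 1$ guarantees positivity of the two-point form, while the self-duality $\overline{S}=1-S$ is precisely the identity making the prescription $\omega_{S}(B(f)^{\ast}B(g))=(f,Sg)_{\mcal{K}}$ consistent with both $B(f)^{\ast}=B(\Gamma f)$ and the anti-commutation relations, as one sees by comparing $\omega_{S}(B(f)B(g)^{\ast})$ computed two ways. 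I would therefore invoke the standard existence-and-uniqueness theorem for quasi-free states to obtain a unique state $\omega_{S}$ whose even moments are governed by the fermionic Wick rule, namely a Pfaffian of two-point functions.

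Next I would pass from the state to a genuine probability measure. Setting $a_{x}:=B(0\oplus e_{x})$ and $a_{x}^{\ast}=B(e_{x}\oplus 0)$ (the latter equality following from $\Gamma e_{x}=e_{x}$), one verifies that these obey the usual CAR and that the occupation numbers $n_{x}:=a_{x}^{\ast}a_{x}$ are mutually commuting projections. The $C^{\ast}$-subalgebra they generate is commutative with joint spectrum $\Omega=\{0,1\}^{\mfrak{X}}$, so the restriction of $\omega_{S}$ to it is, by Riesz--Markov, a probability measure $M^{S}$ on $(\Omega,\Sigma)$ satisfying $M^{S}(\Omega_{x_{1},\dots,x_{n}})=\omega_{S}(n_{x_{1}}\cdots n_{x_{n}})$. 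Positivity of the correlation functions is automatic, being the value of a state on a positive element.

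It remains to compute these moments. Ordering the $2n$ generators as $c_{1},\dots,c_{2n}=a_{x_{1}}^{\ast},a_{x_{1}},\dots,a_{x_{n}}^{\ast},a_{x_{n}}$ and applying the Pfaffian Wick rule gives $\omega_{S}(n_{x_{1}}\cdots n_{x_{n}})=\pf[\what{A}]$, where $\what{A}$ is the $2n\times 2n$ anti-symmetric matrix determined by $\what{A}_{ij}=\omega_{S}(c_{i}c_{j})$ for $i<j$. A short calculation from the definition of $\omega_{S}$ yields the four elementary two-point functions $\omega_{S}(a_{x}^{\ast}a_{y}^{\ast})=(e_{x},S_{21}e_{y})$, $\omega_{S}(a_{x}^{\ast}a_{y})=(e_{x},S_{22}e_{y})$, $\omega_{S}(a_{x}a_{y}^{\ast})=(e_{x},S_{11}e_{y})$ and $\omega_{S}(a_{x}a_{y})=(e_{x},S_{12}e_{y})$, so the $(k,l)$ block of $\what{A}$ equals $\mbb{K}_{S}(x_{k},x_{l})$ on the off-diagonal, where $\delta_{x_{k}x_{l}}=0$ erases the shift by $1$ in the lower-left entry. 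On the diagonal I would use the anti-symmetry forced by $\overline{S}=1-S$ to see that the $(1,1)$ and $(2,2)$ entries vanish and that the lower-left entry is $\omega_{S}(a_{x_{k}}a_{x_{k}}^{\ast})-1=-\omega_{S}(n_{x_{k}})$, again matching $\mbb{K}_{S}(x_{k},x_{k})$ exactly. Hence $\what{A}=[\mbb{K}_{S}(x_{i},x_{j})]$ and the Pfaffian formula follows, while uniqueness of $M^{S}$ is inherited from the fact that the system of correlation functions determines the measure.

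The main obstacle I anticipate is the bookkeeping in this last step: arranging the sign conventions of the Wick/Pfaffian rule, the chosen operator ordering, and the placement of creation versus annihilation operators so that the resulting anti-symmetric matrix reproduces $\mbb{K}_{S}$ block by block — in particular reconciling the diagonal blocks, where the naive two-point functions do not vanish but the combination dictated by $\overline{S}=1-S$ does, with the off-diagonal blocks, where the explicit $-1$ in the kernel is harmless. Everything else is either a direct appeal to the quasi-free state formalism or a routine identification of the commutative subalgebra with $C(\Omega)$.
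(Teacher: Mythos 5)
Your proposal is correct, and it shares with the paper the two ingredients that carry the real weight: Araki's existence theorem for the quasi-free state $\varphi_{S}$ with covariance $S$ (the paper's Lemma~\ref{lem:quasi-free_covariance}, which the paper re-proves via a doubling construction while you invoke it as known), and the Wick--Pfaffian computation identifying the Wick matrix of $a_{x_{1}}^{\ast}a_{x_{1}}\cdots a_{x_{n}}^{\ast}a_{x_{n}}$ with $[\mbb{K}_{S}(x_{i},x_{j})]_{1\le i,j\le n}$ --- your block-by-block bookkeeping, including the observation that $\delta_{x,y}$ makes the shift by $1$ invisible off the diagonal and that the diagonal blocks close up thanks to the anti-symmetry forced by $\overline{S}=1-S$, is exactly the content of the paper's equation~(\ref{eq:quasifree_correlation_function}) together with Remark~\ref{rem:check_anti-sym_matrix_func}. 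Where you genuinely diverge is the construction of the measure: the paper verifies Lenard's criteria, reducing everything to the positivity of the single element $A_{\bm{\Phi}}\in\mcal{C}(\mcal{K},\Gamma)$, which it checks by diagonalizing $\pi_{P_{0}}(A_{\bm{\Phi}})$ on the Fock basis $\{e_{\omega}\}_{\omega\in\Omega^{\circ}}$; you instead restrict $\varphi_{S}$ to the commutative occupation subalgebra and apply Riesz--Markov, so positivity of the measure is automatic and you obtain directly the identity $\varphi_{S}(F)=\int_{\Omega}F(\omega)\,M^{S}({\rm d}\omega)$ for $F\in C(\Omega)$ --- precisely the philosophy the paper announces in its introduction before opting for Lenard's theorem, and an identity the paper needs later anyway. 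The trade-off is that your route leans on the claim that the joint spectrum of $\{a_{x}^{\ast}a_{x}\}_{x\in\mfrak{X}}$ is all of $\{0,1\}^{\mfrak{X}}$, which you assert without proof; the honest justification --- every minimal projection $\prod_{x\in X}a_{x}^{\ast}a_{x}\prod_{y\in Y}a_{y}a_{y}^{\ast}$ is nonzero because it acts nontrivially on a suitable Fock vector $e_{\omega}$ --- uses the same Fock-representation input as the paper's positivity check, and the paper itself states this identification of the subalgebra with $C(\Omega)$ without proof, so this is a one-sentence omission rather than a flaw; note also that for existence alone it suffices that the Gelfand spectrum \emph{embeds} into $\{0,1\}^{\mfrak{X}}$, since the Riesz--Markov measure can then be pushed forward to $\Omega$, and the resulting correlation functions are still $\varphi_{S}(a_{x_{1}}^{\ast}a_{x_{1}}\cdots a_{x_{n}}^{\ast}a_{x_{n}})$ as required.
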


\begin{Remark}
If $S$ is diagonal; $S_{12}=S_{21}=0$, the PfPP $M^{S}$ is a DPP.
\end{Remark}

\begin{Remark}
\label{rem:check_anti-sym_matrix_func}
For $S\in\mcal{Q}(\mcal{K},\Gamma)$, the function $\mbb{K}_{S}(\cdot,\cdot)$ satisfies the required anti-symmetry (\ref{eq:anti-sym_matrix_func}). In fact, the self-adjointness of $S$ gives $S_{11}^{\ast}=S_{11}$, $S_{22}^{\ast}=S_{22}$, $S_{12}^{\ast}=S_{21}$, and the identity $\overline{S}=1-S$ implies $JS_{11}J=1-S_{22}$, $JS_{12}J=-S_{21}$. In particular, we have $S_{12}^{\mrm{T}}=-S_{12}$, $S_{21}^{\mrm{T}}=-S_{21}$. Therefore,
\begin{gather*}
	(e_{x},S_{21}e_{y})_{\ell^{2}(\mfrak{X})} =-(e_{y},S_{21}e_{x})_{\ell^{2}(\mfrak{X})},\\
	(e_{x},S_{12}e_{y})_{\ell^{2}(\mfrak{X})} =-(e_{y},S_{12}e_{x})_{\ell^{2}(\mfrak{X})}, \\
	(e_{x},S_{22}e_{y})_{\ell^{2}(\mfrak{X})} =\overline{(e_{y},J(1-S_{11})Je_{x})_{\ell^{2}(\mfrak{X})}}=-(e_{y},(S_{11}-1)e_{x})_{\ell^{2}(\mfrak{X})}, \\
(e_{x},(S_{11}-1)e_{y})_{\ell^{2}(\mfrak{X})}=-\overline{(e_{y},JS_{22}Je_{x})_{\ell^{2}(\mfrak{X})}}=-(e_{y},S_{22}e_{x})_{\ell^{2}(\mfrak{X})}
\end{gather*}
for any $x,y\in\mfrak{X}$, which implies the anti-symmetry (\ref{eq:anti-sym_matrix_func}).
\end{Remark}

It is standard to restate Proposition~\ref{prop:existence_ppp} in terms of a Fredholm Pfaffian. Let us assume that a matrix-valued function $\mbb{K}\colon \mfrak{X}\times\mfrak{X}\to M(2;\mbb{C})$ is finitely supported. We take another matrix-valued function $\mbb{J}\colon \mfrak{X}\times\mfrak{X}\to M(2;\mbb{C})$ defined by
\begin{equation*}
	\mbb{J}(x,y)=\delta_{x,y}\left(
	\begin{matrix}
	0 & 1 \\
	-1 & 0
	\end{matrix}
	\right),\qquad x,y\in\mfrak{X}.
\end{equation*}
Then, the sum $\mbb{J}+\mbb{K}$ still exhibits the anti-symmetry~(\ref{eq:anti-sym_matrix_func}).
For each $Y=\{x_{1},\dots,x_{n}\}\subset\mfrak{X}$, it can be verified that~\cite{Rains2000}
\begin{equation*}
	\pf [(\mbb{J}+\mbb{K})(x_{i},x_{j}) ]_{1\le i,j\le n}=1+\sum_{X\subset Y}\pf [\mbb{K}(x,y) ]_{x,y\in X},
\end{equation*}
where the sum runs over non-empty subsets $X\subset Y$. Since $\mbb{K}$ is now supposed to be finitely supported, this description gets stable under the limit $Y \to \mfrak{X}$ so that the following definition of the Fredholm Pfaffian makes sense:
\begin{equation*}
	\pf [\mbb{J}+\mbb{K} ]_{\mfrak{X}}:=1+\sum_{X\subset\mfrak{X}}\pf [\mbb{K}(x,y) ]_{x,y\in X},
\end{equation*}
where the sum over $X$ in fact reduces to a finite sum. It is, of course, possible to extend the definition of Fredholm Pfaffian to a not-necessarily finitely supported function $\mbb{K}$, but we will not need such a generality.

Let $\alpha\colon \mfrak{X}\to\mbb{R}$ be a function such that $\alpha (x)\ge 1$, $x\in\mfrak{X}$ and $\alpha-1$ is finitely supported. Given an anti-symmetric matrix-valued function $\mbb{K}\colon \mfrak{X}\times\mfrak{X}\to M(2;\mbb{C})$, we understand a new function denoted as $\sqrt{\alpha-1}\mbb{K}\sqrt{\alpha-1}$ as
\begin{equation*}
	\left(\sqrt{\alpha-1}\mbb{K}\sqrt{\alpha-1}\right)(x,y):=\sqrt{\alpha (x)-1}\mbb{K}(x,y)\sqrt{\alpha (y)-1},\qquad x,y\in\mfrak{X}.
\end{equation*}
Obviously, the new function $\sqrt{\alpha-1}\mbb{K}\sqrt{\alpha-1}$ again exhibits the anti-symmetry~(\ref{eq:anti-sym_matrix_func}) and is finitely supported.
To such a function $\alpha$, we can associate a multiplicative functional $\Psi_{\alpha}$, which is a measurable function on $(\Omega,\Sigma)$ defined by
\begin{equation*}
	\Psi_{\alpha}(\omega)=\prod_{x\in\omega}\alpha (x),\qquad \omega\in \Omega.
\end{equation*}
Notice that the infinite product reduces to a finite one since $\alpha -1$ is finitely supported.
In terms of these notions, Proposition~\ref{prop:existence_ppp} is equivalent to the following one:
\begin{Proposition}\label{prop:existence_ppp_fredholm}
Let $S\in\mcal{Q}(\mcal{K},\Gamma)$. There is a unique PfPP $M^{S}$ on $\mfrak{X}$ possessing the following property: for any function $\alpha$ on $\mfrak{X}$ such that $\alpha (x)\ge 1$, $x\in\mfrak{X}$ and $\alpha -1$ is finitely supported,
\begin{equation*}
	\int_{\Omega}\Psi_{\alpha}(\omega) M^{S}({\rm d}\omega)=\pf\big[\mbb{J}+\sqrt{\alpha-1}\mbb{K}_{S}\sqrt{\alpha-1}\big]_{\mfrak{X}}.
\end{equation*}
\end{Proposition}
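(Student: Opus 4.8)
The plan is to derive this statement directly from Proposition~\ref{prop:existence_ppp}. Concretely, I take $M^{S}$ to be the PfPP furnished by Proposition~\ref{prop:existence_ppp}, so that $\rho^{M^{S}}_{n}(x_{1},\dots,x_{n})=\pf[\mbb{K}_{S}(x_{i},x_{j})]_{1\le i,j\le n}$, and verify that this very measure satisfies the integral identity; uniqueness will then follow because the identity, read as a generating function, recovers all correlation functions. The two ingredients I will use are the set-theoretic expansion of the multiplicative functional and a scaling property of Pfaffians.

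First I would expand $\Psi_{\alpha}$. Let $A=\{x\in\mfrak{X}\,|\,\alpha(x)>1\}$, which is finite by hypothesis. Writing $\alpha(x)=1+(\alpha(x)-1)$ and expanding the product gives, for every $\omega\in\Omega$,
\begin{equation*}
	\Psi_{\alpha}(\omega)=\prod_{x\in\omega\cap A}\alpha(x)=\sum_{X\subset A,\ X\subset\omega}\prod_{x\in X}(\alpha(x)-1),
\end{equation*}
a finite sum. Integrating term by term (legitimate since the sum is finite) and using $M^{S}(\{\omega\,|\,X\subset\omega\})=M^{S}(\Omega_{x_{1},\dots,x_{k}})=\rho^{M^{S}}_{|X|}(X)$ for $X=\{x_{1},\dots,x_{k}\}$, I obtain
\begin{equation*}
	\int_{\Omega}\Psi_{\alpha}(\omega)M^{S}({\rm d}\omega)=1+\sum_{\emptyset\neq X\subset A}\Bigg(\prod_{x\in X}(\alpha(x)-1)\Bigg)\pf[\mbb{K}_{S}(x,y)]_{x,y\in X},
\end{equation*}
where the empty set contributes $\rho^{M^{S}}_{0}=1$ and the remaining correlation functions are inserted from Proposition~\ref{prop:existence_ppp}.

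Next I would absorb the prefactor into the Pfaffian. For finite $X$, scaling the $2\times 2$ block row and column indexed by each $x\in X$ by $\sqrt{\alpha(x)-1}$ amounts to conjugating the antisymmetric $(2|X|\times 2|X|)$-matrix $[\mbb{K}_{S}(x,y)]_{x,y\in X}$ by the diagonal matrix $D=\operatorname{diag}(\sqrt{\alpha(x)-1}\,I_{2})_{x\in X}$; since $\pf(DMD)=\det(D)\pf(M)$ and $\det(D)=\prod_{x\in X}(\alpha(x)-1)$, this yields
\begin{equation*}
	\Bigg(\prod_{x\in X}(\alpha(x)-1)\Bigg)\pf[\mbb{K}_{S}(x,y)]_{x,y\in X}=\pf\big[\big(\sqrt{\alpha-1}\mbb{K}_{S}\sqrt{\alpha-1}\big)(x,y)\big]_{x,y\in X}.
\end{equation*}
Because $\sqrt{\alpha(x)-1}=0$ for $x\notin A$, every subset meeting the complement of $A$ contributes a vanishing Pfaffian, so the sum over $\emptyset\neq X\subset A$ may be harmlessly enlarged to all non-empty $X\subset\mfrak{X}$. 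Comparing with the combinatorial definition of the Fredholm Pfaffian recalled above (the expansion of $\pf[\mbb{J}+\mbb{K}]_{\mfrak{X}}$ after Rains), the right-hand side is exactly $\pf[\mbb{J}+\sqrt{\alpha-1}\mbb{K}_{S}\sqrt{\alpha-1}]_{\mfrak{X}}$, which proves the identity.

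Finally, for uniqueness, I would observe that the displayed integral is, for each fixed finite $A$, a polynomial in the variables $\{\alpha(x)-1\}_{x\in A}$ ranging over $[0,\infty)$, whose monomial coefficients are precisely the correlation functions $\rho_{|X|}(X)$, $X\subset A$. Hence any PfPP satisfying the integral identity for all admissible $\alpha$ has the same correlation functions as $M^{S}$, and since a system of correlation functions determines a point process uniquely (as noted in the introduction), it must coincide with $M^{S}$. I expect the only genuinely delicate point to be the careful bookkeeping of the $2\times 2$ block structure in the Pfaffian scaling step, together with the justification that the subset sum truncates to the support of $\alpha-1$; the remainder is a direct translation between the correlation-function and generating-function encodings of the same measure.
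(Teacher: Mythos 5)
Your proposal is correct and follows essentially the same route as the paper's proof: you derive the identity from Proposition~\ref{prop:existence_ppp} by expanding $\Psi_{\alpha}$ over subsets of the support of $\alpha-1$, integrating term by term, and absorbing the factors $\prod_{x\in X}(\alpha(x)-1)$ into the Pfaffian via $\pf\big(B^{\mrm{T}}AB\big)=(\det B)(\pf A)$, exactly as in Section~\ref{sect:existence_PfPP}. The only cosmetic difference is the uniqueness step, where the paper specializes to $\alpha_{X}=\chi_{X}+1$ and inverts the triangular subset-inclusion matrix, while you identify coefficients of the multilinear polynomial in $\{\alpha(x)-1\}$; these are equivalent ways of recovering the correlation functions from the generating identity.
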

The equivalence between Propositions~\ref{prop:existence_ppp} and~\ref{prop:existence_ppp_fredholm} basically follows from~\cite{Rains2000}. We will present a proof of this equivalence in Section~\ref{sect:existence_PfPP} for readers' convenience.

\begin{Remark} In \cite{Matsumoto2005b}, the author considered a problem concerning the existence of an $\alpha$-deformed PfPP for a given correlation kernel. Note that the case of $\alpha=-1$ is that of \mbox{PfPPs}. To compare the result at $\alpha=-1$ therein to ours, we regard a matrix-valued function $\mfrak{X}\times\mfrak{X}\to M(2;\mbb{C})$ as an operator on $\mcal{K}$. Notice, in particular, that the function $\mbb{J}$ is identified with
\begin{equation*}
	\mbb{J}=\left(
	\begin{matrix}
		0 & I \\
		-I & 0
	\end{matrix}
	\right)
\end{equation*}
according to the direct sum decomposition $\mcal{K}=\ell^{2}(\mfrak{X})\oplus\ell^{2}(\mfrak{X})$. The case considered in \cite{Matsumoto2005b} was that, for a correlation kernel $\mbb{K}$, the product $\mbb{J}\mbb{K}$ is self-adjoint and the author gave sufficient conditions so that there is a corresponding PfPP. On the other hand, in our case, a correlation kernel $\mbb{K}_{S}$ is related to $S\in\mcal{Q}(\mcal{K},\Gamma)$ as
\begin{equation*}
	\mbb{V}\mbb{K}_{S}=S-
	\left(
	\begin{matrix}
	I & 0 \\
	0 & 0
	\end{matrix}
	\right),\qquad \mbox{where}\quad
	\mbb{V}=\left(
	\begin{matrix}
	0 & I \\
	I & 0
	\end{matrix}
	\right).
\end{equation*}
Hence, in particular, $\mbb{V}\mbb{K}_{S}$ is self-adjoint. When we further impose that $\mbb{J}\mbb{K}_{S}$ is self-adjoint, we must have $S_{12}=S_{21}=0$.
Therefore, the intersection between the result in~\cite{Matsumoto2005b} at $\alpha=-1$ and ours is the case of DPPs.
PfPPs of the same symmetry class as in~\cite{Matsumoto2005b} have also been studied in~\cite{Kargin2014,KasselLevy2019} using quaternion determinants.
\end{Remark}

An interesting subclass of PfPPs obtained in this manner consists of those associated with projection operators.
We write the collection of projection operators in $\mcal{Q}(\mcal{K},\Gamma)$ as
\begin{equation*}
	\mrm{Gr}(\mcal{K},\Gamma)=\big\{P\in\mcal{Q}(\mcal{K},\Gamma)\,|\,P^{2}=P\big\}.
\end{equation*}
This notation is, of course, motivated by the fact that a projection operator $P\in\mrm{Gr}(\mcal{K},\Gamma)$ determines a closed subspace $P\mcal{K}\subset\mcal{K}$ and, therefore, the collection of projection operators can be regarded as an analogue of the Grassmann variety.
Let $P_{0}\in \mrm{Gr}(\mcal{K},\Gamma)$ be the projection operator onto the first component of the direct sum decomposition $\mcal{K}=\ell^{2}(\mfrak{X})\oplus\ell^{2}(\mfrak{X})$, which is expressed as
\begin{equation}
\label{eq:projection_vacuum}
	P_{0}=\left(
	\begin{matrix}
		I & 0 \\
		0 & 0
	\end{matrix}
	\right).
\end{equation}
For each $n\in\mbb{Z}_{\ge 0}$, we write $\Omega_{n}=\{\omega\in \Omega\,|\, \#\omega=n\}$ for the collection of $n$-point configurations and set $\Omega^{\circ}=\bigcup_{n=0}^{\infty}\Omega_{n}$, which consists of configurations of finitely many points.

\begin{Proposition}\label{prop:finitary_PfPP}
Let $P\in \mrm{Gr}(\mcal{K},\Gamma)$ be a projection operator such that $P-P_{0}$ is of Hilbert--Schmidt class. Then the associated PfPP $M^{P}$ is supported in $\Omega^{\circ}$. Equivalently, a~point process~$X$ in~$\mfrak{X}$ obeying $M^{P}$ satisfies $\# X<\infty$ almost surely.
\end{Proposition}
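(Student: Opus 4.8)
The plan is to bound the expected number of points. Writing $\#\omega$ for the number of points of a configuration $\omega$, the random variable $\#\omega$ takes values in $\mbb{Z}_{\ge 0}\cup\{\infty\}$, so it suffices to show that its $M^{P}$-expectation is finite: finiteness of the expectation forces $\#\omega<\infty$ $M^{P}$-almost surely, which is exactly the assertion. By Tonelli's theorem,
\begin{equation*}
	\int_{\Omega}\#\omega\, M^{P}({\rm d}\omega)=\sum_{x\in\mfrak{X}}M^{P}(\Omega_{x})=\sum_{x\in\mfrak{X}}\rho_{1}^{M^{P}}(x),
\end{equation*}
so everything reduces to summing the one-point function.

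First I would evaluate $\rho_{1}^{M^{P}}(x)=\pf\mbb{K}_{P}(x,x)$. The two diagonal entries of the $(2\times 2)$-matrix $\mbb{K}_{P}(x,x)$ vanish: by Remark~\ref{rem:check_anti-sym_matrix_func} the blocks satisfy $P_{12}^{\mrm{T}}=-P_{12}$ and $P_{21}^{\mrm{T}}=-P_{21}$, whence $(e_{x},P_{12}e_{x})_{\ell^{2}(\mfrak{X})}=(e_{x},P_{21}e_{x})_{\ell^{2}(\mfrak{X})}=0$. Consequently $\mbb{K}_{P}(x,x)$ is the antisymmetric $(2\times2)$-matrix whose off-diagonal entry is $(e_{x},P_{22}e_{x})_{\ell^{2}(\mfrak{X})}$, and its Pfaffian is $\rho_{1}^{M^{P}}(x)=(e_{x},P_{22}e_{x})_{\ell^{2}(\mfrak{X})}$. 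Summing over $x$ gives $\int_{\Omega}\#\omega\,M^{P}({\rm d}\omega)=\tr P_{22}$.

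It then remains to prove that $P_{22}$ is of trace class, and this is the crux. A na\"ive estimate only shows that $P_{22}$, being the $(2,2)$-block of $P-P_{0}$ (recall $(P_{0})_{22}=0$), is of Hilbert--Schmidt class, which is insufficient. Here the hypothesis that $P$ is a \emph{projection} is essential. Expanding the $(2,2)$-block of the identity $P^{2}=P$ and using $P_{21}=P_{12}^{\ast}$ yields $P_{22}-P_{22}^{2}=P_{12}^{\ast}P_{12}$; the right-hand side is a product of two Hilbert--Schmidt operators, hence of trace class, so $P_{22}(1-P_{22})$ is of trace class. Now $P_{22}$ is a self-adjoint compact operator with $0\le P_{22}\le 1$, so its eigenvalues $\lambda_{k}$ tend to $0$; thus $\lambda_{k}\le 1/2$ for all but finitely many $k$, and for those $\lambda_{k}\le 2\lambda_{k}(1-\lambda_{k})$. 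Since $\{\lambda_{k}(1-\lambda_{k})\}$ is summable, so is $\{\lambda_{k}\}$, i.e.\ $\tr P_{22}=\sum_{k}\lambda_{k}<\infty$.

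Combining the two computations, $\int_{\Omega}\#\omega\,M^{P}({\rm d}\omega)=\tr P_{22}<\infty$, so $\#\omega<\infty$ almost surely and $M^{P}$ is supported in $\Omega^{\circ}$. The one genuinely delicate point is the upgrade from Hilbert--Schmidt to trace class in the third paragraph; this is precisely where being a projection (rather than a general element of $\mcal{Q}(\mcal{K},\Gamma)$) is used, and it is what singles out the finitary behaviour of $M^{P}$.
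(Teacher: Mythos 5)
Your proof is correct, and it takes a genuinely different route from the paper's. The paper deduces the statement from the theory of unitarily implementable Bogoliubov automorphisms: choosing $V\in\mcal{I}(\mcal{K},\Gamma)$ with $P=V^{\ast}P_{0}V$, the Hilbert--Schmidt hypothesis and Theorem~\ref{thm:implementability} produce a unitary $\mcal{U}$ on $\mcal{F}\big(\ell^{2}(\mfrak{X})\big)$ such that $\varphi_{P}(A)=(\mcal{U}\bm{1},\pi_{P_{0}}(A)\mcal{U}\bm{1})_{\mcal{F}(\ell^{2}(\mfrak{X}))}$, and expanding $\mcal{U}\bm{1}=\sum_{\omega\in\Omega^{\circ}}c^{P}(\omega)e_{\omega}$ over the orthonormal system indexed by \emph{finite} configurations shows that $M^{P}=\big|c^{P}(\cdot)\big|^{2}$ is supported on $\Omega^{\circ}$. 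You instead run a first-moment argument that avoids implementability theory entirely: $\int_{\Omega}\#\omega\, M^{P}({\rm d}\omega)=\tr P_{22}$, and the block identity $P_{22}-P_{22}^{2}=P_{12}^{\ast}P_{12}$ coming from $P^{2}=P$ upgrades $P_{22}$ from Hilbert--Schmidt to trace class. All your steps check out: the vanishing of the diagonal entries of $\mbb{K}_{P}(x,x)$ follows from Remark~\ref{rem:check_anti-sym_matrix_func} exactly as you say, the $(2,2)$-block computation is right, and the eigenvalue argument is valid --- though you can shortcut it, since $P_{22}$ is itself Hilbert--Schmidt the identity $P_{22}=P_{12}^{\ast}P_{12}+P_{22}^{2}$ already exhibits $P_{22}$ as a sum of two trace-class operators. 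As for what each approach buys: yours is more elementary and quantitative, giving the explicit bound $\int_{\Omega}\#\omega\, M^{P}({\rm d}\omega)=\tr (P_{12}^{\ast}P_{12})+\tr \big(P_{22}^{2}\big)\le \tr\big((P-P_{0})^{2}\big)$ on the expected number of points, and it correctly isolates where projectivity (as opposed to mere membership in $\mcal{Q}(\mcal{K},\Gamma)$) enters; the paper's is more structural, producing the explicit weights $\big|c^{P}(\omega)\big|^{2}$, extending verbatim to Proposition~\ref{prop:finitary_PfPP_generalization} (where the expected number of points is infinite, so your argument would have to be redone by counting discrepancies of $\omega$ relative to $X$), and the expansion of the transformed vacuum it provides is reused later in the perfectness proofs.
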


\subsection{CAR algebra and quasifree states}
We introduce an algebra of canonical anti-commutation relations (CAR algebra, for short) with a general one-particle Hilbert space following \cite{Araki1970,Binnenhei1995}.
Another style of, but equivalent, definition of a CAR algebra, can be found in, e.g., \cite{BratteliRobinson1997} (see Remark~\ref{rem:equivalence_CAR} below for the equivalence).
Let~$\mcal{K}$ be a complex Hilbert space of infinite dimension and $\Gamma$ be an anti-unitary involution on $\mcal{K}$.
The algebra $\mcal{C}_{0}(\mcal{K},\Gamma)$ is a~$\ast$-algebra over $\mbb{C}$ generated by $B(f)$, $f\in\mcal{K}$ subject to the relations
\begin{alignat*}{3}
	&B(\alpha f+\beta g)=\alpha B(f)+\beta B(g),\qquad && f, g\in\mcal{K},\quad \alpha,\beta\in\mbb{C},& \\
	&B(f)^{\ast}= B(\Gamma f),\qquad && f\in\mcal{K},&\\
	&\{B(f)^{\ast}, B(g)\}=(f,g)_{\mcal{K}}, \qquad && f,g\in\mcal{K},&
\end{alignat*}
where $\{\cdot,\cdot\}$ is the anti-commutator; $\{a,b\}:=ab+ba$.
It is known that the algebra $\mcal{C}_{0}(\mcal{K},\Gamma)$ admits a unique C$^{\ast}$-norm $\|\cdot\|$. We denote the C$^{\ast}$-completion by $\mcal{C}(\mcal{K},\Gamma)=\overline{\mcal{C}_{0}(\mcal{K},\Gamma)}^{\|\cdot\|}$ and call it the (self-dual) CAR algebra with one-particle space $(\mcal{K},\Gamma)$. To those who are more familiar with regarding $\ell^{2}(\mfrak{X})$ as a one-particle Hilbert space, we emphasize that we adopt a~``doubled'' space as a one-particle Hilbert space.

For a general C$^{\ast}$-algebra $\mfrak{A}$, a state over it is, by definition, a linear functional $\varphi\colon \mfrak{A}\to\mbb{C}$ satisfying the conditions that
\begin{enumerate}\itemsep=0pt
\item[1)] 	for every $A\in\mfrak{A}$, $\varphi (A^{\ast}A)\ge 0$ holds,
\item[2)] 	it is normalized:
		\begin{equation*}
			\|\varphi\|:=\sup\big\{|\varphi (A)|\, \big|\, \|A\|=1\big\}=1.
		\end{equation*}
\end{enumerate}
Note that, from these properties, it can be deduced that $\varphi (1)=1$ (see, e.g., \cite[Chapter~I, Section~9]{Takesaki1979}). Since any positive element $B\in\mfrak{A}$ admits an expression $B=A^{\ast}A$ with some $A\in\mfrak{A}$, the first condition is equivalently stated that $\varphi (B)\ge 0$ for all positive elements $B\in \mfrak{A}$.

\begin{Definition}A state $\varphi$ over a CAR algebra $\mcal{C}(\mcal{K},\Gamma)$ is said to be quasi-free if the moments admit Wick's formula, i.e.,
\begin{gather*}
	\varphi (B(f_{1})\cdots B(f_{2n+1}) ) =0, \\
	\varphi (B(f_{1})\cdots B(f_{2n}) ) =(-1)^{n(n-1)/2}\sum_{\sigma}\operatorname{sgn} \sigma \prod_{i=1}^{n}\varphi (B(f_{\sigma (i)})B(f_{\sigma (i+n)}) ),
\end{gather*}
where $\sigma$ runs over permutations in $\mfrak{S}_{2n}$ such that $\sigma(1)<\cdots<\sigma (n)$ and $\sigma (i)<\sigma (i+n)$, $i=1,\dots, n$.
\end{Definition}

It is immediate from the definition that, for a quasi-free state $\varphi$ over $\mcal{C}(\mcal{K},\Gamma)$, a $2n$-point correlation function is expressed in terms of a Pfaffian so that
\begin{equation*}
	\varphi (B(f_{1})\cdots B(f_{2n}) )=\pf \mbb{A}_{\varphi}(f_{1},\dots, f_{2n}),
\end{equation*}
where $\mbb{A}_{\varphi}(f_{1},\dots, f_{2n})$ is the unique anti-symmetric $(2n\times 2n)$-matrix defined by
\begin{equation*}
	\mbb{A}_{\varphi}(f_{1},\dots, f_{2n})_{i,j}=\varphi (B(f_{i})B(f_{j}) ),\qquad 1\le i <j \le 2n.
\end{equation*}

By definition, a quasi-free state over a CAR algebra is uniquely determined by the two-point function. In fact, we have the following.
\begin{Lemma}[\cite{Araki1970}]\label{lem:quasi-free_covariance}
The collection of quasi-free states over $\mcal{C}(\mcal{K},\Gamma)$ is in one-to-one correspondence with the collection of operators $\mcal{Q}(\mcal{K},\Gamma)$ defined in~\eqref{eq:collection_covariance_ops}, under which an operator $S\in\mcal{Q}(\mcal{K},\Gamma)$ corresponds to the quasi-free state~$\varphi_{S}$ defined by
\begin{equation*}
	\varphi_{S}\big(B(f)^{\ast}B(g)\big)=(f,Sg)_{\mcal{K}},\qquad f,g\in\mcal{K}.
\end{equation*}
\end{Lemma}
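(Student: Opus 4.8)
The plan is to exhibit maps in both directions and verify that they are mutually inverse; the forward map $\varphi\mapsto S$ is elementary, while the reverse map is Araki's existence theorem and carries all of the analytic weight.

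For the forward direction, given a quasi-free state $\varphi$ I would introduce the sesquilinear form $s(f,g):=\varphi\big(B(f)^{\ast}B(g)\big)$ on $\mcal{K}\times\mcal{K}$ and show that it is bounded and represented by an operator of $\mcal{Q}(\mcal{K},\Gamma)$. Boundedness together with $0\le S\le 1$ follows from positivity of $\varphi$ and the CAR: since $\varphi\big(B(f)^{\ast}B(f)\big)\ge 0$ and $\varphi\big(B(f)B(f)^{\ast}\big)\ge 0$ while their sum equals $\varphi\big(\{B(f)^{\ast},B(f)\}\big)=\|f\|^{2}$, one gets $0\le s(f,f)\le\|f\|^{2}$; Hermitian symmetry $\overline{s(f,g)}=s(g,f)$ comes from $\varphi(A^{\ast})=\overline{\varphi(A)}$. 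Hence there is a unique $S=S^{\ast}$ with $0\le S\le 1$ and $s(f,g)=(f,Sg)_{\mcal{K}}$. To see $\overline{S}=1-S$ I would use $B(f)^{\ast}=B(\Gamma f)$ and $\Gamma^{2}=1$ to rewrite $s(\Gamma g,\Gamma f)=\varphi\big(B(g)B(f)^{\ast}\big)$, and the anti-unitarity of $\Gamma$ to identify this with $(f,\overline{S}g)_{\mcal{K}}$; the relation $\{B(f)^{\ast},B(g)\}=(f,g)_{\mcal{K}}$ then reads $s(f,g)+s(\Gamma g,\Gamma f)=(f,g)_{\mcal{K}}$, i.e.\ $S+\overline{S}=1$. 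Thus $S\in\mcal{Q}(\mcal{K},\Gamma)$ and $\varphi=\varphi_{S}$.

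Injectivity of $\varphi\mapsto S$ is then immediate from the defining Wick formula: all odd moments vanish and every even moment is the Pfaffian of the matrix with entries $\varphi\big(B(f_{i})B(f_{j})\big)$. Since $\varphi\big(B(f)B(g)\big)=s(\Gamma f,g)=(\Gamma f,Sg)_{\mcal{K}}$ is recovered from $S$, the operator $S$ determines every moment, hence $\varphi$ on the dense subalgebra $\mcal{C}_{0}(\mcal{K},\Gamma)$, and by continuity on all of $\mcal{C}(\mcal{K},\Gamma)$.

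The substantive direction is surjectivity: for each $S\in\mcal{Q}(\mcal{K},\Gamma)$ one must produce a quasi-free state with two-point form $(\cdot,S\,\cdot)_{\mcal{K}}$. Declaring $\varphi_{S}$ on monomials through the Wick formula makes consistency with the CAR relations a purely combinatorial check; the genuine obstacle is \emph{positivity}, $\varphi_{S}(A^{\ast}A)\ge 0$. My plan is to obtain it by realizing $\varphi_{S}$ as a Fock state. When $S=P$ is a projection with $\overline{P}=1-P$, the splitting $\mcal{K}=P\mcal{K}\oplus\overline{P}\mcal{K}$ is a polarization compatible with $\Gamma$ and $\varphi_{P}$ is the vacuum state of the associated (irreducible) Fock representation, so its positivity is manifest. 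For general $0\le S\le 1$ with $\overline{S}=1-S$ I would reduce to this case by purification: on $\widetilde{\mcal{K}}=\mcal{K}\oplus\mcal{K}$ with $\widetilde{\Gamma}=\mrm{diag}(\Gamma,\Gamma)$ set
\begin{equation*}
	P=\left(\begin{matrix} S & i\sqrt{S(1-S)} \\ -i\sqrt{S(1-S)} & 1-S \end{matrix}\right),
\end{equation*}
which one checks is a basis projection ($P=P^{\ast}=P^{2}$ and $\overline{P}=1-P$, using $\overline{S}=1-S$ together with $\Gamma i=-i\Gamma$). The embedding $f\mapsto(f,0)$ identifies $\mcal{C}(\mcal{K},\Gamma)$ with a subalgebra of $\mcal{C}(\widetilde{\mcal{K}},\widetilde{\Gamma})$, and restricting the Fock state $\varphi_{P}$ to it yields a state whose two-point form is the $(1,1)$-block $S$; positivity passes to the restriction, giving the desired $\varphi_{S}$. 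This existence step is exactly the content of Araki's theorem \cite{Araki1970} and is where all of the analysis lies, after which the mutual-inverse property follows from the two preceding paragraphs.
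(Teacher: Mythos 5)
Your proposal is correct and follows essentially the same route as the paper: the forward direction via a bounded, positive quadratic form $Q(f,g)=\varphi\big(B(f)^{\ast}B(g)\big)$ forced by the CAR to satisfy $\overline{S}=1-S$, and surjectivity via purification of $S$ to a basis projection on the doubled space followed by restriction of the corresponding Fock state to the embedded subalgebra $\mcal{C}(\mcal{K},\Gamma)$. The only difference is cosmetic: you put the phase $\pm {\rm i}$ into the off-diagonal blocks $\pm{\rm i}\sqrt{S(1-S)}$ and keep the involution $\mrm{diag}(\Gamma,\Gamma)$, whereas the paper takes real off-diagonal blocks $S^{1/2}(1-S)^{1/2}$ and compensates with $\what{\Gamma}=\mrm{diag}(\Gamma,-\Gamma)$; the anti-linearity of $\Gamma$ makes the two choices equivalent.
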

We will give a proof of Lemma~\ref{lem:quasi-free_covariance} in Section~\ref{sect:existence_PfPP} for readers' convenience.

As we have announced, we work on the case when $\mcal{K}=\ell^{2}(\mfrak{X})\oplus\ell^{2}(\mfrak{X})$ equipped with $\Gamma$ given by (\ref{eq:Gamma}), and will adopt this pair $(\mcal{K},\Gamma)$ in the sequel without any specification.
The associated CAR algebra $\mcal{C}(\mcal{K},\Gamma)$ is generated by $a_{x}=B((0,e_{x}))$ and $a_{x}^{\ast}=B((e_{x},0))$, $x\in\mfrak{X}$.
Notice that the notation is compatible with the $\ast$-involution since $\Gamma (e_{x},0)=(0,e_{x})$.
Then, from the definition of a quasi-free state, we have
\begin{equation}
\label{eq:quasifree_correlation_function}
	\varphi_{S}\big(a_{x_{1}}^{\ast} \cdots a_{x_{n}}^{\ast}a_{x_{n}}\cdots a_{x_{1}}\big)=\pf [\mbb{K}_{S}(x_{i},x_{j}) ]_{1\le i,j\le n},
\end{equation}
where the matrix-valued function $\mbb{K}_{S}(\cdot,\cdot)\colon \mfrak{X}\times\mfrak{X}\to M(2;\mbb{C})$ was defined in~(\ref{eq:matrix_kernel_S}) associated with the operator~$S$. In fact, the values of the matrix-valued function $\mbb{K}_{S}(\cdot,\cdot)$ is written in terms of the quasi-free state as
\begin{equation*}
	\mbb{K}_{S}(x,y)=\left(
	\begin{matrix}
	\varphi_{S}(a_{x}^{\ast}a_{y}^{\ast}) & \varphi_{S}(a_{x}^{\ast}a_{y}) \\
	\varphi_{S}(a_{x}a_{y}^{\ast})-\delta_{x,y} & \varphi_{S}(a_{x}a_{y})
	\end{matrix}
	\right),\qquad x,y\in\mfrak{X}.
\end{equation*}
At this stage, we can find an analogy between this expectation value and a correlation function of a PfPP.

\begin{Remark}\label{rem:equivalence_CAR}
We may define functionals $a^{\ast}\colon \ell^{2}(\mfrak{X})\to \mcal{C}(\mcal{K},\Gamma)$ and $a\colon \ell^{2}(\mfrak{X})\to \mcal{C}(\mcal{K},\Gamma)$ by extending the assignments $e_{x}\mapsto a_{x}^{\ast}$ and $e_{x}\mapsto a_{x}$, $x\in\mfrak{X}$ linearly and anti-linearly, respectively. In other words, these functionals are defined by $a^{\ast}(u)=B((u,0))$, $a(u)=B((0,Ju))$, $u\in \ell^{2}(\mfrak{X})$.
Then, we find anti-commutation relations that are widely accepted~\cite{BratteliRobinson1997}:
\begin{gather}\label{eq:anti-commutation_BratteliRobinson}
	 \{a^{\ast}(u),a^{\ast}(v)\}=\{a(u),a(v)\}=0, \qquad
	 \{a(u),a^{\ast}(v)\}=(u,v)_{\ell^{2}(\mfrak{X})}, \qquad u,v\in \ell^{2}(\mfrak{X}).
\end{gather}
Conversely, when we have linear and anti-linear functional $a^{\ast}$, $a$ satisfying the anti-commutation relations (\ref{eq:anti-commutation_BratteliRobinson}), the generators of $\mcal{C}(\mcal{K},\Gamma)$ are recovered as $B((u,v))=a^{\ast}(u)+a(Jv)$, $(u,v)\in\mcal{K}$.
\end{Remark}

Let us consider a commutative algebra topologically generated by $a_{x}^{\ast}a_{x}$, $x\in\mfrak{X}$, which is identified with the algebra $C(\Omega)$ of continuous functions on $\Omega$ by the correspondence
\begin{equation*}
	\prod_{i=1}^{n}a_{x_{i}}^{\ast}a_{x_{i}}\mapsto \chi_{\Omega_{x_{1},\dots, x_{n}}},\qquad x_{1},\dots, x_{n}\in\mfrak{X}\colon \ \mbox{distinct.}
\end{equation*}
In the sequel, we regard $C(\Omega)$ as a subalgebra of $\mcal{C}(\mcal{K},\Gamma)$ under this correspondence.

Our strategy to prove Proposition~\ref{prop:existence_ppp} is to identify the above expectation value~(\ref{eq:quasifree_correlation_function}) with the correlation function of the desired PfPP, expecting that for a quasi-free state $\varphi_{S}$, there exists a probability measure $M^{S}$ on $(\Omega,\Sigma)$ so that the restriction of $\varphi_{S}$ on the subalgebra $C(\Omega)$ is identical to the integration with respect to $M^{S}$:
\begin{equation*}
	\varphi_{S}(F)=\int_{\Omega}F(\omega)M^{S}({\rm d}\omega),\qquad F\in C(\Omega).
\end{equation*}
Then, the probability measure $M^{S}$ is automatically a PfPP.
We will see in Section~\ref{sect:existence_PfPP} that this indeed happens to prove Proposition \ref{prop:existence_ppp}.

\begin{Remark}
It seems highly nontrivial if our construction can be extended to the case of continuous systems. If the space is continuous, the {\it field operators} $a(x)$, $a^{\ast}(x)$ labeled by points $x$ in the space are no longer operators. Instead, they are operator-valued distributions. Hence, for a suitable test function $u$, their integral $a(u)=\int u(x)a(x){\rm d}x$, $a^{\ast}(u)=\int u(x)a^{\ast}(x){\rm d}x$ make sense as operators. On the other hand, in the context of point processes, it is natural to consider an integral of the form $\rho (u)\overset{?}{=}\int u(x)a^{\ast}(x)a(x){\rm d}x$ for a test function~$u$. In fact, the existence of such operators allows us to have
\begin{equation*}
	\varphi_{S} (\rho (u_{1})\cdots \rho (u_{n}) )\overset{?}{=}\int \left(\prod_{i=1}^{n}{\rm d}x_{i}\right)u_{1}(x_{1})\cdots u_{n}(x_{n})\pf [\mbb{K}_{S}(x_{i},x_{j})]_{1\le i,j\le n}
\end{equation*}
under a quasi-free state $\varphi_{S}$ and to identify the integrated Pfaffian as a correlation function of a~PfPP. It is, however, not ensured that the integral $\rho (u)$ does make sense in general.
\end{Remark}

\subsection{Perfectness of a probability measure}
As we have seen in Lemma \ref{lem:quasi-free_covariance}, the set $\mcal{Q}(\mcal{K},\Gamma)$ labels quasi-free states over $\mcal{C}(\mcal{K},\Gamma)$. In this sense, Proposition~\ref{prop:existence_ppp} states that associated to a quas-free state, a PfPP exists.
Recently, Olshanski \cite{Olshanski2020} proposed a scheme to invert this correspondence, which is outlined here.

Let $\mfrak{S}$ be the group of finite permutations of $\mfrak{X}$.
The assumptions are
\begin{enumerate}\itemsep=0pt
\item 	A probability measure $M$ on $(\Omega,\Sigma)$ is $\mfrak{S}$-quasi-invariant.
\item 	The set $\mfrak{X}$ is equipped with a linear order $\le$ so that the ordered set $(\mfrak{X},\le)$ is isomorphic to $\mbb{Z}$ or $\mbb{N}$.
\end{enumerate}
We consider the gauge invariant subalgebra $\mcal{A}(\mcal{K},\Gamma)\subset \mcal{C}(\mcal{K},\Gamma)$ topologically generated by $a_{x}^{\ast}a_{y}$, $x,y\in\mfrak{X}$.
Under the above assumptions, we can associate to the probability measure~$M$ a~representation $\mcal{T}^{M}$ of the gauge invariant subalgebra $\mcal{A}(\mcal{K},\Gamma)$ on the Hilbert space $L^{2}(\Omega, M)$. Then, we immediately obtain a state~$\varphi^{M}$ on $\mcal{A}(\mcal{K},\Gamma)$ by
\begin{equation*}
	\varphi^{M}(A)=\big(\mbb{I},\mcal{T}^{M}(A)\mbb{I}\big)_{L^{2}(\Omega,M)},\qquad A\in \mcal{A}(\mcal{K},\Gamma),
\end{equation*}
where $\mbb{I}\in L^{2}(\Omega,M)$ is the unit constant function on~$\Omega$.
By construction of the representa\-tion~$\mcal{T}^{M}$, the action of $C(\Omega)$ on $L^{2}(\Omega,M)$ is just given by the multiplication. Therefore, we have
\begin{equation*}
	\varphi^{M}(F)= (\mbb{I},F\mbb{I} )_{L^{2}(\Omega,M)}=\int_{\Omega}F(\omega)M({\rm d}\omega),\qquad F \in C(\Omega),
\end{equation*}
which implies that the state $\varphi^{M}$ restricted on the commutative subalgebra $C(\Omega)$ is just the expectation value with respect to the probability measure~$M$ and, in particular, if~$M$ is a PfPP, $\varphi^{M}$ restricted on~$C(\Omega)$ admits a Pfaffian expression.

\begin{Definition}
Let $M$ be a $\mfrak{S}$-quasi-invariant probability measure on $(\Omega,\Sigma)$ and assume that~$\mfrak{X}$ can be equipped with a linear order $\le$ so that $(\mfrak{X},\le)$ is isomorphic to $\mbb{Z}$ or $\mbb{N}$. The probability measure~$M$ is said to be {\it perfect} if there exists a quasifree state $\varphi$ on $\mcal{C}(\mcal{K},\Gamma)$ such that the resulting state $\varphi^{M}$ on $\mcal{A}(\mcal{K},\Gamma)$ is realized as
\begin{equation*}
	\varphi^{M}=\varphi|_{\mcal{A}(\mcal{K},\Gamma)}.
\end{equation*}
\end{Definition}

\subsection{Schur measures} Schur measures form a family of DPPs on $\mfrak{X}=\mbb{Z}+\frac{1}{2}$ introduced in~\cite{Okounkov2001} that includes the Plancherel measure and the $z$-measure as special cases.
Let $\mbb{Y}$ be the collection of partitions, each element of which is a sequence of non-increasing integers $\lambda=(\lambda_{1}\ge \lambda_{2}\ge \cdots \ge 0)$ such that there exists $\ell\in\mbb{N}$ and $\lambda_{\ell+1}=0$. To each $\lambda\in\mbb{Y}$, we associate a subset
\begin{equation*}
	\mbb{M}(\lambda)=\left\{\lambda_{i}-i+\frac{1}{2}\,\bigg|\,i\in\mbb{N}\right\}\subset\mfrak{X},
\end{equation*}
which defines an embedding $\mbb{M}\colon \mbb{Y}\hookrightarrow \Omega$. Therefore, given a probability measure on $\mbb{Y}$, we obtain one on $(\Omega,\Sigma)$ by pushing it forward via $\mbb{M}$.

Let $\mbb{T}$ be the collection of data $\rho=(\alpha;\beta)$, where $\alpha=(\alpha_{1}\ge \alpha_{2}\ge\cdots \ge 0)$ and $\beta=(\beta_{1}\ge \beta_{2}\ge \cdots \ge 0)$ satisfying $\sum_{j\ge 1}\alpha_{j}+\sum_{j\ge 1}\beta_{j}\le 1$.
It is known that $\mbb{T}$ parametrizes Schur-positive specializations of the ring of symmetric functions.

We introduce a subset $\mbb{T}^{\circ}$ consisting of $\rho=(\alpha;\beta)$ such that $\alpha_{1}<1$ and $\beta_{1}<1$.
Note that the difference $\mbb{T}\backslash\mbb{T}^{\circ}$ consists of two elements $(1,0,\dots; 0,0,\dots)$ and $(0,0,\dots; 1,0,\dots)$. The Schur measure $M_{s(\rho)}$ associated with $\rho\in\mbb{T}^{\circ}$ is defined by
\begin{equation*}
	M_{s(\rho)}(\omega)\propto\begin{cases}
		s_{\lambda}(\rho)^{2}, & \omega=\mbb{M}(\lambda),\\
		0, & \mrm{otherwise},
	\end{cases}\qquad \omega\in \Omega,
\end{equation*}
where $s_{\lambda}(\rho)$ is the Schur function associated with $\lambda\in\mbb{Y}$ specialized at $\rho$. Note that, in general, we are allowed to have a weight at $\omega=\mbb{M}(\lambda)$ proportional to $s_{\lambda}(\rho)s_{\lambda}(\pr{\rho})$ with possibly different specializations $\rho$ and $\pr{\rho}$, but we concentrate on the spacial case when $\rho=\pr{\rho}$.

\begin{Theorem}\label{thm:perfectness_Schur}
Assume that a Schur measure $M_{s(\rho)}$ for $\rho\in\mbb{T}^{\circ}$ is $\mfrak{S}$-quasi invariant. Then it is perfect.
\end{Theorem}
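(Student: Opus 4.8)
The plan is to produce an explicit quasi-free state whose restriction to the gauge-invariant subalgebra reproduces $\varphi^{M}$. Since a Schur measure $M=M_{s(\rho)}$ is determinantal, Proposition~\ref{prop:existence_ppp} writes it as $M=M^{S_{\rho}}$ for a diagonal operator $S_{\rho}\in\mcal{Q}(\mcal{K},\Gamma)$ (i.e.\ $S_{12}=S_{21}=0$), and Lemma~\ref{lem:quasi-free_covariance} attaches to $S_{\rho}$ a quasi-free state $\varphi_{S_{\rho}}$ with $\varphi_{S_{\rho}}(a_{x}^{\ast}a_{y})=(e_{x},S_{22}e_{y})_{\ell^{2}(\mfrak{X})}=K^{M}(x,y)$, the Okounkov correlation kernel. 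This $\varphi_{S_{\rho}}$ is my candidate, so proving perfectness reduces to establishing $\varphi^{M}=\varphi_{S_{\rho}}|_{\mcal{A}(\mcal{K},\Gamma)}$. The route I would take is to realize the Olshanski representation $\mcal{T}^{M}$ unitarily inside fermionic Fock space, so that the cyclic vector $\mbb{I}$ is carried to the Fock vector generating the Schur measure.

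Concretely, via the boson--fermion correspondence I would write the weights as $M(\mbb{M}(\lambda))=Z^{-1}s_{\lambda}(\rho)^{2}$ with $Z=\sum_{\lambda\in\mbb{Y}}s_{\lambda}(\rho)^{2}$, and take the Fock vector $|\Psi_{\rho}\rangle=\mcal{V}_{-}(\rho)|\emptyset\rangle=\sum_{\lambda\in\mbb{Y}}s_{\lambda}(\rho)|\lambda\rangle$, where $\mcal{V}_{-}(\rho)$ is the half-vertex operator and $|\lambda\rangle$ the semi-infinite wedge with Maya diagram $\mbb{M}(\lambda)$; then $\langle\Psi_{\rho}|\Psi_{\rho}\rangle=Z$. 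Because $\mcal{V}_{-}(\rho)$ is an exponential of an operator bilinear in the fermions $\psi_{x},\psi_{x}^{\ast}$, conjugation by it is a Bogoliubov transformation, so the vector state $A\mapsto Z^{-1}\langle\Psi_{\rho}|A|\Psi_{\rho}\rangle$ is gauge-invariant quasi-free; as its number-operator correlations reproduce $M$ (Okounkov's realization of Schur measures), it has kernel $K^{M}$ and hence equals $\varphi_{S_{\rho}}$. I would then build an intertwiner $U\colon L^{2}(\Omega,M)\to\overline{\mcal{A}(\mcal{K},\Gamma)|\Psi_{\rho}\rangle}$ on the dense core of indicators by $\chi_{\Omega_{x_{1},\dots,x_{n}}}\mapsto Z^{-1/2}\psi_{x_{1}}^{\ast}\psi_{x_{1}}\cdots\psi_{x_{n}}^{\ast}\psi_{x_{n}}|\Psi_{\rho}\rangle$ and $\mbb{I}\mapsto Z^{-1/2}|\Psi_{\rho}\rangle$; that $U$ is isometric on this core is immediate, for if $Y,Y'\subset\mfrak{X}$ are the two occupied sets then both $(\chi_{\Omega_{Y}},\chi_{\Omega_{Y'}})_{L^{2}(\Omega,M)}$ and the corresponding Fock inner product equal the determinantal correlation $\rho^{M}_{|Y\cup Y'|}$ of $Y\cup Y'$, the number operators being commuting idempotents.

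The crux is to verify that $U$ intertwines the remaining generators, namely the hopping operators $a_{x}^{\ast}a_{y}$ with $x\neq y$, and the representative case is the off-diagonal two-point function. In $\mcal{T}^{M}$ the operator $a_{x}^{\ast}a_{y}$ acts as a signed hop weighted by the Radon--Nikodym cocycle of the transposition $\tau_{xy}\in\mfrak{S}$, so that $(\mbb{I},\mcal{T}^{M}(a_{x}^{\ast}a_{y})\mbb{I})_{L^{2}(\Omega,M)}$ collapses to a sum over configurations $\omega$ with $y\in\omega$, $x\notin\omega$ of a fermionic sign times $\sqrt{M(\omega)M(\tau_{xy}\omega)}$. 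Here is exactly where the hypotheses enter: $\mfrak{S}$-quasi-invariance together with $\rho\in\mbb{T}^{\circ}$ forces $s_{\lambda}(\rho)>0$ for every $\lambda$, so that every transposed configuration lies again in the support and the cocycle and its square root make sense. Writing $\mu(\lambda)$ for the partition with $\mbb{M}(\mu(\lambda))=\tau_{xy}\mbb{M}(\lambda)$ and substituting the weights, the sum becomes the fermionic matrix element:
\[
\varphi^{M}\big(a_{x}^{\ast}a_{y}\big)=\frac{1}{Z}\sum_{\lambda\in\mbb{Y}}\varepsilon_{xy}(\lambda)\,s_{\lambda}(\rho)\,s_{\mu(\lambda)}(\rho)=\frac{1}{Z}\langle\Psi_{\rho}|\psi_{x}^{\ast}\psi_{y}|\Psi_{\rho}\rangle=K^{M}(x,y),
\]
since $\langle\mu|\psi_{x}^{\ast}\psi_{y}|\lambda\rangle$ is nonzero precisely when $\mbb{M}(\mu)=\tau_{xy}\mbb{M}(\lambda)$. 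Granting that $U$ intertwines every generator, it intertwines their products, whence $\varphi^{M}(A)=Z^{-1}\langle\Psi_{\rho}|A|\Psi_{\rho}\rangle=\varphi_{S_{\rho}}(A)$ for all $A\in\mcal{A}(\mcal{K},\Gamma)$, which is perfectness.

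The hard part will be the sign bookkeeping in this last step: the fermionic reordering sign $\varepsilon_{xy}(\lambda)=(-1)^{\#\{z\in\mbb{M}(\lambda)\,:\,z\text{ strictly between }x\text{ and }y\}}$ from $\langle\mu|\psi_{x}^{\ast}\psi_{y}|\lambda\rangle$ must be matched against the sign the representation $\mcal{T}^{M}$ assigns to the hop, and the cocycle must be shown to symmetrize to $\sqrt{M(\omega)M(\tau_{xy}\omega)}$ under the normalization built into $\mcal{T}^{M}$; moreover the same matching must be carried out not only on $\mbb{I}$ but on all core vectors, so that $U$ genuinely intertwines $a_{x}^{\ast}a_{y}$. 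A secondary, purely analytic obstacle is to confirm $Z=\sum_{\lambda}s_{\lambda}(\rho)^{2}<\infty$ (the Cauchy identity under $\alpha_{1},\beta_{1}<1$ and $\sum_{j}\alpha_{j}+\sum_{j}\beta_{j}\le1$) and that the hopping operators extend boundedly to $L^{2}(\Omega,M)$, so that $U$ is unitary onto the cyclic subspace. Once these are settled, the state $\varphi=\varphi_{S_{\rho}}$ witnesses perfectness and the proof is complete.
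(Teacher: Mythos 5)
Your proposal is correct and coincides with the paper's own strategy: the paper likewise takes the half-vertex-operator vector $\Xi_{-}(\rho)\bm{1}=\sum_{\lambda\in\mbb{Y}}s_{\lambda}(\rho)e_{\mbb{M}(\lambda)}$ as the cyclic vector of the candidate quasi-free state (Proposition \ref{prop:Schur_measure_quasifree}), notes that its coefficients are the non-negative square roots $M_{s(\rho)}(\omega)^{1/2}$, and shows that the isometry $\delta_{\omega}\mapsto M_{s(\rho)}(\omega)^{1/2}e_{\omega}$ intertwines $\mcal{T}^{M_{s(\rho)}}$ with $\pi_{P_{0}^{\mrm{S}}}$. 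The sign bookkeeping you flag as the hard part is exactly what the paper settles in the warm-up Proposition \ref{prop:perfectness_warmup}, where the intertwining is checked on the wreath-product generators $p(\varepsilon_{x})$ and $p(s_{x,y})$ rather than on individual hops $a_{x}^{\ast}a_{y}$: the Jordan--Wigner string $\eta_{(x,y)}$ built into $p(s_{x,y})$ cancels the fermionic reordering sign, so that $\pi_{P_{0}}(p(g))e_{\omega}=e_{g(\omega)}$ holds with no sign at all, and the Radon--Nikod{\'y}m factors then match trivially against the square-root weights.
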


\subsection{Conditional measures of PfPPs}
Let $X$ and $\pr{X}$ be disjoint finite subsets in $\mfrak{X}$ and take a cylinder set
\begin{equation*}
	C(X,\pr{X})=\{\omega\in \Omega\,|\, X\subset \omega,\, \pr{X}\cap\omega=\varnothing\},
\end{equation*}
which consists of configurations such that every points in $X$ are occupied and those in $\pr{X}$ are unoccupied.
We identify the cylinder set $C(X,\pr{X})$ with $\Omega (\mfrak{X}\backslash (X\sqcup\pr{X})):=\{0,1\}^{\mfrak{X}\backslash (X\sqcup\pr{X})}$ via the map
\begin{equation*}
	F_{X,\pr{X}}\colon \ C(X,\pr{X})\to \Omega (\mfrak{X}\backslash (X\sqcup\pr{X}));\qquad \omega\mapsto \omega\backslash X.
\end{equation*}
For a probability measure $M$ on $(\Omega,\Sigma)$, assume that the cylinder set has strictly positive weight: $M(C(X,\pr{X}))>0$.
We define the conditional measure of $M$ on $(X,\pr{X})$ by
\begin{equation*}
	M_{X,\pr{X}}:=(F_{X,\pr{X}})_{\ast}\frac{M\big|_{C(X,\pr{X})}}{M(C(X,\pr{X}))}.
\end{equation*}

We focus on PfPPs associated with projection operators.
For a subset $A\subset\mfrak{X}$, we set
\begin{equation*}
	\mcal{K}_{A}^{+}=\overline{\mrm{Span}\{(e_{x},0)\,|\,x\in A\}},\qquad \mcal{K}^{-}_{A}=\overline{\mrm{Span}\{(0,e_{x})\,|\,x\in A\}}
\end{equation*}
and $\mcal{K}_{A}=\mcal{K}_{A}^{+}\oplus\mcal{K}_{A}^{-}$, which is the direct sum of Hilbert spaces.
It is obvious from the definition that $\Gamma$ preserves the subspace $\mcal{K}_{A}$. Thus, we can write $\Gamma_{A}:=\Gamma|_{\mcal{K}_{A}}$.

Let $X$ and $\pr{X}$ be finite disjoint subsets of $\mfrak{X}$ and take a projection operator $P\in \mrm{Gr}(\mcal{K},\Gamma)$.
Then, $P\mcal{K}\subset\mcal{K}$ is a closed subspace. We define a new closed subspace
\begin{equation*}
	P\mcal{K}_{X,\pr{X}}:=\big(P\mcal{K}+\big(\mcal{K}_{X}^{+}\oplus \mcal{K}^{-}_{\pr{X}}\big)\big)\cap \mcal{K}_{\mfrak{X}\backslash (X\sqcup\pr{X})}
\end{equation*}
in $\mcal{K}_{\mfrak{X}\backslash (X\sqcup\pr{X})}$ and a projection operator $P_{X,\pr{X}}$ as the orthogonal projection onto $P\mcal{K}_{X,\pr{X}}$ in $\mcal{K}_{\mfrak{X}\backslash (X\sqcup\pr{X})}$.

\begin{Lemma}\label{lem:conditional_projection}
Let $X$ and $\pr{X}$ be finite disjoint subsets of $\mfrak{X}$. Then, we have
\begin{equation*}
	P_{X,\pr{X}}\in\mrm{Gr}\big(\mcal{K}_{\mfrak{X}\backslash(X\sqcup\pr{X})},\Gamma_{\mfrak{X}\backslash (X\sqcup\pr{X})}\big).
\end{equation*}
\end{Lemma}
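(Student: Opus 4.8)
Write $Y=X\sqcup\pr{X}$, $Z=\mfrak{X}\backslash Y$, $\Gamma_{Z}=\Gamma|_{\mcal{K}_{Z}}$, $L=P\mcal{K}$ and $W=\mcal{K}_{X}^{+}\oplus\mcal{K}_{\pr{X}}^{-}$, so that $V:=P\mcal{K}_{X,\pr{X}}=(L+W)\cap\mcal{K}_{Z}$ and $P_{X,\pr{X}}$ is the orthogonal projection of $\mcal{K}_{Z}$ onto $V$. As an orthogonal projection, $P_{X,\pr{X}}$ automatically satisfies $P_{X,\pr{X}}^{2}=P_{X,\pr{X}}=P_{X,\pr{X}}^{\ast}$ and $0\le P_{X,\pr{X}}\le 1$, so the only point to establish is the defining relation of $\mcal{Q}(\mcal{K}_{Z},\Gamma_{Z})$, namely $\overline{P_{X,\pr{X}}}=1-P_{X,\pr{X}}$ with $\overline{A}=\Gamma_{Z}A\Gamma_{Z}$. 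Since $\Gamma_{Z}$ is an anti-unitary involution, $\overline{P_{X,\pr{X}}}$ is the orthogonal projection onto $\Gamma_{Z}(V)$ while $1-P_{X,\pr{X}}$ projects onto $\mcal{K}_{Z}\ominus V$; hence the lemma is equivalent to the single subspace identity
\begin{equation*}
	\Gamma_{Z}(V)=\mcal{K}_{Z}\ominus V.
\end{equation*}
The inputs I would use are: $\Gamma$ preserves each $\mcal{K}_{A}$ and interchanges $\mcal{K}_{A}^{+}$ with $\mcal{K}_{A}^{-}$; the hypothesis $P\in\mrm{Gr}(\mcal{K},\Gamma)$ reads $\Gamma L=L^{\perp}$, so $\mcal{K}=L\oplus\Gamma L$ orthogonally; and $\Gamma W=\mcal{K}_{X}^{-}\oplus\mcal{K}_{\pr{X}}^{+}$ gives $\mcal{K}_{Y}=W\oplus\Gamma W$ orthogonally, i.e.\ $\Gamma W=\mcal{K}_{Y}\ominus W$.

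First I would compute both sides. Because $\Gamma$ is an anti-unitary bijection that preserves $\mcal{K}_{Z}$, it commutes with intersections and carries sums to sums, so
\begin{equation*}
	\Gamma_{Z}(V)=\Gamma\big((L+W)\cap\mcal{K}_{Z}\big)=(\Gamma L+\Gamma W)\cap\mcal{K}_{Z}=(L^{\perp}+\Gamma W)\cap\mcal{K}_{Z}.
\end{equation*}
For the other side, note that $L+W$ is closed (as $L$ is closed and $W$ is finite-dimensional) and $\mcal{K}_{Z}^{\perp}=\mcal{K}_{Y}$, so $(L+W)^{\perp}=L^{\perp}\cap W^{\perp}$ and $V^{\perp}=\overline{(L^{\perp}\cap W^{\perp})+\mcal{K}_{Y}}=(L^{\perp}\cap W^{\perp})+\mcal{K}_{Y}$, the closure being superfluous because $\mcal{K}_{Y}$ is finite-dimensional. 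Letting $E_{Z}$, $E_{Y}$ denote the orthogonal projections onto $\mcal{K}_{Z}$, $\mcal{K}_{Y}$, every $u\in\mcal{K}_{Z}\ominus V=\mcal{K}_{Z}\cap V^{\perp}$ equals $E_{Z}c$ for some $c\in L^{\perp}\cap W^{\perp}$, and conversely $E_{Z}c\in V^{\perp}\cap\mcal{K}_{Z}$ for every such $c$; thus
\begin{equation*}
	\mcal{K}_{Z}\ominus V=E_{Z}\big(L^{\perp}\cap W^{\perp}\big).
\end{equation*}

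The identity then follows from two inclusions. For $\mcal{K}_{Z}\ominus V\subseteq\Gamma_{Z}(V)$: given $c\in L^{\perp}\cap W^{\perp}$, its component $E_{Y}c$ satisfies $E_{Y}c\perp W$ (because $c\perp W$ and $\mcal{K}_{Z}\perp W$), so $E_{Y}c\in\mcal{K}_{Y}\ominus W=\Gamma W$; hence $E_{Z}c=c-E_{Y}c\in L^{\perp}+\Gamma W$ lies in $(L^{\perp}+\Gamma W)\cap\mcal{K}_{Z}=\Gamma_{Z}(V)$. For the reverse inclusion I would verify $\Gamma_{Z}(V)\perp V$: writing $v=\ell+a\in V$ and $w=\ell'+b\in\Gamma_{Z}(V)$ with $\ell\in L$, $a\in W$, $\ell'\in L^{\perp}$, $b\in\Gamma W$, the memberships $v,w\in\mcal{K}_{Z}$ force the $\mcal{K}_{Y}$-components $E_{Y}\ell=-a$ and $E_{Y}\ell'=-b$, whence $v=E_{Z}\ell$, $w=E_{Z}\ell'$, and
\begin{equation*}
	0=(\ell,\ell')=(E_{Y}\ell,E_{Y}\ell')+(E_{Z}\ell,E_{Z}\ell')=(a,b)+(v,w)=(v,w),
\end{equation*}
using $(\ell,\ell')=0$ and $a\perp b$. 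This yields $\Gamma_{Z}(V)\subseteq\mcal{K}_{Z}\ominus V$, and together with the first inclusion it gives the desired equality, hence the lemma.

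The main obstacle is the bookkeeping with orthogonal complements in the infinite-dimensional space $\mcal{K}_{Z}$: the identities $(L+W)^{\perp}=L^{\perp}\cap W^{\perp}$ and $(A\cap B)^{\perp}=\overline{A^{\perp}+B^{\perp}}$ require closed subspaces and a closure in general, so the argument genuinely relies on $X$ and $\pr{X}$ being finite (equivalently $\mcal{K}_{Y}$ and $W$ finite-dimensional) to ensure that $L+W$ and $(L^{\perp}\cap W^{\perp})+\mcal{K}_{Y}$ are closed and that $V$ is a closed subspace onto which $P_{X,\pr{X}}$ projects. Everything else is the linear algebra of the two inclusions above.
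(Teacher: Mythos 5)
Your proposal is correct, and its skeleton is the same as the paper's: both reduce the lemma to the single subspace identity $\Gamma_{Z}(V)=\mcal{K}_{Z}\ominus V$ for $V=P\mcal{K}_{X,\pr{X}}$, both compute $\Gamma_{Z}(V)=(L^{\perp}+\Gamma W)\cap\mcal{K}_{Z}=\overline{P}\mcal{K}_{\pr{X},X}$ from $\Gamma P\mcal{K}=(P\mcal{K})^{\perp}$ and $\Gamma\mcal{K}_{A}^{\pm}=\mcal{K}_{A}^{\mp}$, and your orthogonality computation (lift $v\in V$ and $w\in\Gamma_{Z}(V)$ to $\ell\in L$, $\ell'\in L^{\perp}$, note that membership in $\mcal{K}_{Z}$ forces the $\mcal{K}_{Y}$-components to lie in $W$ and $\Gamma W$ respectively, and expand $(\ell,\ell')=0$) is essentially word-for-word the paper's argument. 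The genuine difference is that the paper stops there: it proves only the mutual orthogonality of $P\mcal{K}_{X,\pr{X}}$ and $\overline{P}\mcal{K}_{\pr{X},X}$ and then asserts that this ``implies'' $(P\mcal{K}_{X,\pr{X}})^{\perp}=\overline{P}\mcal{K}_{\pr{X},X}$, which strictly speaking yields only the inclusion $\overline{P}\mcal{K}_{\pr{X},X}\subseteq(P\mcal{K}_{X,\pr{X}})^{\perp}\cap\mcal{K}_{Z}$. You supply the missing reverse inclusion by identifying $\mcal{K}_{Z}\ominus V=E_{Z}\big(L^{\perp}\cap W^{\perp}\big)$ (this is where you genuinely use that $L+W$ and $(L^{\perp}\cap W^{\perp})+\mcal{K}_{Y}$ are closed, i.e., finiteness of $X$ and $\pr{X}$) and then checking that $E_{Y}c\in\mcal{K}_{Y}\ominus W=\Gamma W$ for $c\in L^{\perp}\cap W^{\perp}$, so that $E_{Z}c=c-E_{Y}c\in(L^{\perp}+\Gamma W)\cap\mcal{K}_{Z}$. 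So your write-up is not merely correct; it closes a step the paper leaves implicit, and it pinpoints exactly where the finite-dimensionality hypothesis enters the duality bookkeeping.
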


 Let us also introduce the notion of regularity.
 \begin{Definition} Let $X$ and $\pr{X}$ be finite disjoint subsets of $\mfrak{X}$.
 A projection operator $P\in \mrm{Gr}(\mcal{K},\Gamma)$ is said to be $(X,\pr{X})$-regular if
 \begin{equation*}
 	P\mcal{K}\cap \big(\mcal{K}_{X}^{+}\oplus \mcal{K}_{\pr{X}}^{-}\big)=\{0\}.
 \end{equation*}
 \end{Definition}

 The following result is an analogue of \cite[Proposition~6.13]{Olshanski2020} for PfPPs.
 \begin{Theorem}\label{thm:conditional_projection}
 Let $X$ and $\pr{X}$ be finite disjoint subsets of $\mfrak{X}$.
 Assume that a projection operator $P\in\mrm{Gr}(\mcal{K},\Gamma)$ is $(X,\pr{X})$-regular.
 Then, we have
 \begin{equation*}
 	M^{P}_{X,\pr{X}}:=\big(M^{P}\big)_{X,\pr{X}}=M^{P_{X,\pr{X}}}.
 \end{equation*}
 In particular, it is a PfPP associated with a projection operator.
 \end{Theorem}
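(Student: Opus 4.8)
The plan is to pass from the conditional measure to a conditioning of the quasi-free state attached to $P$ and to recognize the outcome as the projection state $\varphi_{P_{X,\pr{X}}}$. Since a PfPP is determined by its system of correlation functions and, by Lemma~\ref{lem:conditional_projection} together with Proposition~\ref{prop:existence_ppp}, the right-hand side $M^{P_{X,\pr{X}}}$ is already a well-defined PfPP on $\mfrak{X}\backslash(X\sqcup\pr{X})$, it suffices to show that $M^{P}_{X,\pr{X}}$ agrees with the restriction of $\varphi_{P_{X,\pr{X}}}$ to the commutative subalgebra generated by the number operators on $\mfrak{X}\backslash(X\sqcup\pr{X})$. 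Write $\mcal{W}=\mcal{K}_{X}^{+}\oplus\mcal{K}_{\pr{X}}^{-}$ and $\mcal{V}=\mcal{K}_{\mfrak{X}\backslash(X\sqcup\pr{X})}$. The first observation is that $\mcal{K}_{X\sqcup\pr{X}}=\mcal{W}\oplus\Gamma\mcal{W}$ as an orthogonal direct sum, so $\mcal{W}$ is the annihilation subspace of a basis projection on $\mcal{K}_{X\sqcup\pr{X}}$ whose Fock vacuum has every point of $X$ occupied and every point of $\pr{X}$ empty. In the GNS (Fock) representation of $\varphi_{P}$ this vacuum sector is precisely the range of the cylinder projection $Q=\prod_{x\in X}a_{x}^{\ast}a_{x}\prod_{y\in\pr{X}}a_{y}a_{y}^{\ast}$, and a short computation identifies the integral of a function on configurations over $\mfrak{X}\backslash(X\sqcup\pr{X})$ against $M^{P}_{X,\pr{X}}$ with $(\Phi,\,\cdot\,\Phi)/(\Phi,\Phi)$, where $\Phi=Q\Omega_{P}$.

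The key steps are then as follows. First, I would show $M^{P}(C(X,\pr{X}))=\varphi_{P}(Q)=\|\Phi\|^{2}>0$, which is exactly where $(X,\pr{X})$-regularity $P\mcal{K}\cap\mcal{W}=\{0\}$ enters: transversality of $P\mcal{K}$ and $\mcal{W}$ guarantees that the projection of the Fock vacuum $\Omega_{P}$ onto the range of $Q$ does not vanish, so the conditional measure is well defined. Second, since $P$ is a projection, $\varphi_{P}$ is a pure quasi-free state and $\Omega_{P}$ is a Fock vacuum; conditioning on a definite occupation pattern of the modes labelled by $X\sqcup\pr{X}$ produces a vector that factorizes as the $\mcal{W}$-vacuum of those modes tensored with a vector $\Phi$ in the Fock space over $\mcal{V}$. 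I would verify that $\Phi$ is again a Fock vacuum, i.e.\ that the induced state $\psi$ on $\mcal{C}(\mcal{V},\Gamma|_{\mcal{V}})$ is quasi-free. Third, I would identify its covariance by checking
\[
	\psi\big(B(f)^{\ast}B(g)\big)=\big(f,P_{X,\pr{X}}g\big)_{\mcal{V}},\qquad f,g\in\mcal{V},
\]
equivalently that the creation subspace of $\Phi$ equals $(P\mcal{K}+\mcal{W})\cap\mcal{V}=P\mcal{K}_{X,\pr{X}}$; by Lemma~\ref{lem:conditional_projection} the associated orthogonal projection lies in $\mrm{Gr}\big(\mcal{K}_{\mfrak{X}\backslash(X\sqcup\pr{X})},\Gamma_{\mfrak{X}\backslash(X\sqcup\pr{X})}\big)$, whence $\psi=\varphi_{P_{X,\pr{X}}}$ and therefore $M^{P}_{X,\pr{X}}=M^{P_{X,\pr{X}}}$.

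I expect the main obstacle to be the third step: the explicit computation that conditioning the Fock vacuum $\Omega_{P}$ on the occupation data encoded by $\mcal{W}$ turns the covariance $P$ into the projection onto $(P\mcal{K}+\mcal{W})\cap\mcal{V}$. Concretely this is a quasi-free (Bogoliubov) conditioning identity: one must track how imposing $B(w)\Phi=0$ for all $w\in\mcal{W}$ modifies the annihilation subspace $(1-P)\mcal{K}$ and then projects it into $\mcal{V}$, using $\mcal{K}_{X\sqcup\pr{X}}=\mcal{W}\oplus\Gamma\mcal{W}$ and the relation $\overline{P}=1-P$ throughout so as to remain inside $\mcal{Q}(\mcal{K},\Gamma)$. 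An equivalent, more computational route avoids the Fock picture: express the ratio of cylinder weights $M^{P}(C(X\cup Y,\pr{X}))/M^{P}(C(X,\pr{X}))$ through the multiplicative-functional formula of Proposition~\ref{prop:existence_ppp_fredholm} (setting $\alpha=0$ on $\pr{X}$ to enforce vacancy and differentiating in $\alpha(x)$, $x\in X$, to force occupancy), and reduce the resulting quotient of Fredholm Pfaffians to a Pfaffian Schur complement whose kernel is exactly $\mbb{K}_{P_{X,\pr{X}}}$; either way the analytic heart is the same projection-to-projection identity, with regularity ensuring nonsingularity of the Schur complement.
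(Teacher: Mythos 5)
Your strategy is sound and genuinely different from the paper's, but as submitted it is a plan rather than a proof: the two claims that carry all the content are announced (``I would show\dots'', ``I would verify\dots'') rather than established, and you yourself flag the decisive one as an unresolved obstacle. First the comparison. The paper never regards the conditioned state as a Fock state: it (a)~reduces to one-point conditionings, showing that $P_{X,\pr{X}}$ is obtained by iterating $P_{\{x\},\varnothing}$ and $P_{\varnothing,\{x\}}$ and that regularity is inherited along the way (Lemmas~\ref{lem:rest_decomp} and~\ref{lem:regularity_decomp}); (b)~computes $P_{\{x\},\varnothing}$ and $P_{\varnothing,\{x\}}$ in closed form by the block Schur-complement reduction formula (Lemma~\ref{lem:projection_reduction_formula}, Proposition~\ref{prop:expression_cond_proj}); and (c)~writes the one-point conditional correlation kernel as a Pfaffian Schur complement and matches it with (b) entry by entry. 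Your route treats arbitrary finite $(X,\pr{X})$ in one stroke, needs neither the decomposition lemmas nor any kernel computation, and would prove a stronger statement: the conditioned state equals $\varphi_{P_{X,\pr{X}}}$ on the whole algebra $\mcal{C}\big(\mcal{K}_{\mfrak{X}\backslash(X\sqcup\pr{X})},\Gamma_{\mfrak{X}\backslash(X\sqcup\pr{X})}\big)$, not only on its commutative part. Both arguments share the paper's Section~\ref{subsect:conditioning_CAR_alg} (conditional measure $=$ conditioned state) and Lemma~\ref{lem:conditional_projection}.

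Now the gaps. (i)~You assert that regularity gives $M^{P}(C(X,\pr{X}))=\varphi_{P}(Q)=\|\Phi\|^{2}>0$, with $Q=\chi_{C(X,\pr{X})}$, ``by transversality''; that implication is true but not automatic, and the paper only ever proves it for single points (where it is the computation $(e_{x},P_{22}e_{x})=\|P(0,e_{x})\|^{2}$) and then inducts. An argument valid for general $(X,\pr{X})$: take an orthonormal basis $\{w_{i}\}_{i=1}^{m}$ of $\mcal{W}=\mcal{K}_{X}^{+}\oplus\mcal{K}_{\pr{X}}^{-}$; then $Q=B(w_{1})\cdots B(w_{m})B(w_{m})^{\ast}\cdots B(w_{1})^{\ast}$, so $\varphi_{P}(Q)=\|\pi_{P}(B(\Gamma w_{m}))\cdots\pi_{P}(B(\Gamma w_{1}))\bm{1}\|^{2}$; the component of this vector in $\bigwedge^{m}P\mcal{K}$ is $P\Gamma w_{m}\wedge\cdots\wedge P\Gamma w_{1}$, whence $\varphi_{P}(Q)\ge\det[(P\Gamma w_{i},P\Gamma w_{j})]_{1\le i,j\le m}$, and this Gram determinant vanishes exactly when $\Gamma\mcal{W}\cap\overline{P}\mcal{K}\neq\{0\}$, i.e., when $P\mcal{K}\cap\mcal{W}\neq\{0\}$, which regularity forbids.

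(ii)~Your announced ``main obstacle'' --- that conditioning turns the annihilation subspace into $\overline{P}\mcal{K}_{\pr{X},X}=(1-P_{X,\pr{X}})\mcal{K}_{\mfrak{X}\backslash(X\sqcup\pr{X})}$ (this identification is the content of Lemma~\ref{lem:conditional_projection}) --- in fact has a short algebraic proof which must be written out for the argument to exist: given $f$ in that space, choose $b\in\mcal{K}_{\pr{X}}^{+}$ and $\pr{b}\in\mcal{K}_{X}^{-}$ with $f+b+\pr{b}\in\overline{P}\mcal{K}$; then $\pi_{P}(B(f+b+\pr{b}))\bm{1}=0$, the even element $Q$ commutes with $B(f)$, and $QB(b)=QB(\pr{b})=0$ because $B(b)$ is a combination of $a_{y}^{\ast}$, $y\in\pr{X}$, $B(\pr{b})$ a combination of $a_{x}$, $x\in X$, while $a_{y}a_{y}^{\ast}a_{y}^{\ast}=0=a_{x}^{\ast}a_{x}a_{x}$; hence
\begin{equation*}
\pi_{P}(B(f))\Phi=\pi_{P}(Q)\pi_{P}(B(f))\bm{1}=-\pi_{P}\big(QB(b)+QB(\pr{b})\big)\bm{1}=0.
\end{equation*}
(iii)~Finally, to pass from these annihilation relations (plus $\Phi\neq0$) to ``$\psi=\varphi_{P_{X,\pr{X}}}$'' you are implicitly using a uniqueness theorem that appears nowhere in the paper and must be invoked explicitly: a state $\psi$ on a self-dual CAR algebra satisfying $\psi(B(f)^{\ast}B(f))=0$ for all $f$ in the range of $1-P'$, with $P'$ a basis projection, is unique and equal to the Fock state $\varphi_{P'}$. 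This is classical and is in Araki's paper that the article already cites, but without it the annihilation relations identify nothing. With (i)--(iii) supplied, your argument is complete and is arguably cleaner and more general than the paper's; your alternative Fredholm--Pfaffian route, by contrast, would still need a multi-point analogue of Proposition~\ref{prop:expression_cond_proj} to recognize the Schur complement as $\mbb{K}_{P_{X,\pr{X}}}$, so it collapses back into a lengthier version of the paper's proof.
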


 \begin{Remark}
A similar problem has been studied in~\cite{BufetovCundenQiu2019}. Our result describes the reduction of a projection operator, which is new.
 \end{Remark}

 A natural application of Theorem \ref{thm:conditional_projection} is a proof of quasi-invariance of PfPPs with respect to the symmetric group along the line of~\cite{BufetovOlshanski2019,Olshanski2011,Olshanski2020}, which is set aside for a future work.

 \subsection*{Organization}
 In Section~\ref{sect:existence_PfPP}, we recall the Fock representations of a CAR algebra and prove Propositions~\ref{prop:existence_ppp},~\ref{prop:existence_ppp_fredholm}, and~\ref{prop:finitary_PfPP}.
 In Section~\ref{sect:measre_to_state}, after recalling the procedure of obtaining a state over the gauge-invariant subalgebra of a CAR algebra from a probability measure proposed in~\cite{Olshanski2020}, we give a proof of Theorem~\ref{thm:perfectness_Schur}.
 Section~\ref{sect:conditional_measures} is devoted to a proof of Theorem~\ref{thm:conditional_projection}, which includes a one of Lemma~\ref{lem:conditional_projection} as a part.
 In Appendix~\ref{sect:shifed_Schur_measures}, we illustrate that the shifted Schur measures are understood as examples in our perspective from a CAR algebra.

\section{Existence of Pfaffian point processes} \label{sect:existence_PfPP}
\subsection{Fock representations}
Here, we see Fock representations of $\mcal{C}(\mcal{K},\Gamma)$, which play prominent roles in the theory of a CAR algebra.
\subsubsection{General construction}
Let $\mcal{H}$ be a Hilbert space.
For each $n\in\mbb{N}$, we write $\bigwedge^{n}\mcal{H}$ for the $n$-th wedge product of $\mcal{H}$, which is generated by the vectors $u_{1}\wedge\cdots \wedge u_{n}$, $u_{1},\dots, u_{n}\in\mcal{H}$ subject to the anti-symmetry:
\begin{equation*}
	u_{1}\wedge\cdots \wedge u_{n}=\operatorname{sgn} (\sigma) u_{\sigma(1)}\wedge\cdots \wedge u_{\sigma (n)},\qquad \sigma\in \mfrak{S}_{n}.
\end{equation*}
When we take $\{e_{i}\}_{i=1,2,\dots}$ for a complete orthonormal system of $\mcal{H}$, then the vectors $e_{i_{1}}\wedge\cdots \wedge e_{i_{n}}$, $i_{1}>\cdots >i_{n}$ form a complete orthonormal system of $\bigwedge^{n}\mcal{H}$.
The Fermi Fock space over $\mcal{H}$ is defined by
\begin{equation*}
	\mcal{F}(\mcal{H})=\bigoplus_{n=0}^{\infty}\bigwedge^{n}\mcal{H},
\end{equation*}
where we set $\bigwedge^{0}\mcal{H}=\mbb{C}\bm{1}$.
The Fermi Fock space admits the natural inner product induced from each component of the direct sum and becomes a Hilbert space, i.e., the direct sum is understood in the topological sense.

For each $u\in \mcal{H}$, the creation operator $a_{\mcal{H}}^{\ast}(u)$ is an operator on $\mcal{F}(\mcal{H})$ defined by
\begin{equation*}
	a_{\mcal{H}}^{\ast}(u)\colon \ \eta\mapsto u\wedge \eta,\qquad \eta\in \mcal{F}(\mcal{H}).
\end{equation*}
The annihilation operator $a_{\mcal{H}}(u)$ is defined as the adjoint operator of $a_{\mcal{H}}^{\ast}(u)$.
By definition, it is obvious that creation and annihilation operators act as
\begin{equation*}
	a_{\mcal{H}}^{\ast}(u)\colon \ \bigwedge^{n}\mcal{H}\to \bigwedge^{n+1}\mcal{H},\qquad a_{\mcal{H}}(u)\colon \ \bigwedge^{n}\mcal{H}\to \bigwedge^{n-1}\mcal{H}.
\end{equation*}
We can also see that the assignment $u\mapsto a_{\mcal{H}}^{\ast}(u)$ is linear and $u\mapsto a_{\mcal{H}}(u)$ is anti-linear.

\subsubsection{Fock representation and a quasi-free state}
Let us take a projection operator $P\in\mrm{Gr}(\mcal{K},\Gamma)$, associated to which we can construct a representation of $\mcal{C}(\mcal{K},\Gamma)$ on a Fock space $\mcal{F}(P\mcal{K})$.
To describe the action, notice that, from the property $1-P=\overline{P}$, we can see that the projection to the complementary subspace of $P\mcal{K}$ is $\overline{P}$.
Let us set
\begin{equation*}
	\pi_{P}\colon \ \mcal{C}(\mcal{K},\Gamma)\to \mfrak{B}(\mcal{F}(P\mcal{K}));\qquad B(f)\mapsto a_{P\mcal{K}}^{\ast}(Pf)+a_{P\mcal{K}}(P\Gamma f),\qquad f\in\mcal{K}.
\end{equation*}
Then, it is known that $(\pi_{P},\mcal{F}(P\mcal{K}))$ is a faithful and irreducible representation of $\mcal{C}(\mcal{K},\Gamma)$.
When we set
\begin{equation*}
	\varphi_{P}(A)=(\bm{1}, \pi_{P}(A)\bm{1})_{\mcal{F}(P\mcal{K})},\qquad A\in \mcal{C}(\mcal{K},\Gamma),
\end{equation*}
we can verify that $\varphi_{P}$ is exactly the quasi-free state corresponding to $P$ in the sense that it possesses the property required in Lemma \ref{lem:quasi-free_covariance}

In particular, when we take $P_{0}\in\mrm{Gr}(\mcal{K},\Gamma)$, we have $P_{0}\mcal{K}\simeq \ell^{2}(\mfrak{X})$, and the map $\pi_{P_{0}}$ is described as
\begin{equation*}
	\pi_{P_{0}}(a^{\ast}(u))=a_{P_{0}\mcal{K}}^{\ast}(u),\qquad \pi_{P_{0}}(a(u))=a_{P_{0}\mcal{K}}(u),\qquad u\in P_{0}\mcal{K}\simeq \ell^{2}(\mfrak{X}).
\end{equation*}

Let us describe a standard complete orthonormal system of the Fock space $\mcal{F}\big(\ell^{2}(\mfrak{X})\big)$. Since~$\mfrak{X}$ is countable, it can be equipped with a linear order~$\le$. For each $\omega=\{x_{1}>\cdots>x_{n}\}\in\Omega^{\circ}$, we set
\begin{equation*}
	e_{\omega}:=e_{x_{1}}\wedge\cdots \wedge e_{x_{n}}.
\end{equation*}
Then the collection $\{e_{\omega}\}_{\omega\in\Omega^{\circ}}$ forms a complete orthonormal system.

\begin{proof}[Proof of Lemma \ref{lem:quasi-free_covariance}]
Let $\varphi$ be a quasi-free state over $\mcal{C}(\mcal{K},\Gamma)$.
Then the assignment
\begin{equation*}
	Q_{\varphi}\colon \ \mcal{K}\times \mcal{K}\to\mbb{C};\qquad (f,g)\mapsto \varphi (B(f)^{\ast}B(g))
\end{equation*}
defines a quadratic form on $\mcal{K}$.
It follows from the relation in the CAR algebra that
\begin{equation*}
	B(f)^{\ast}B(f)\le B(f)^{\ast}B(f)+B(f)B(f)^{\ast}=\|f\|^{2},\qquad f\in\mcal{K}.
\end{equation*}
Since the norm of a state is unity, we have $Q_{\varphi}(f,f)\le \|B(f)^{\ast}B(f)\|\le \|f\|^{2}$ for any $f\in\mcal{K}$, which implies that $Q_{\varphi}$ is a bounded quadratic form. Therefore, owing to the correspondence between bounded operators and bounded quadratic forms (see, e.g., \cite[Chapter~1]{Koshmanenko1999}), there exists a bounded operator $S\in \mfrak{B}(\mcal{K})$ such that $Q_{\varphi}(f,g)=(f,Sg)_{\mcal{K}}$, $f,g \in \mcal{K}$. It also follows from the positivity of the state that the quadratic form $Q_{\varphi}$ is positive, and therefore, $S=S^{\ast}\ge 0$. Again, from the relation in the CAR algebra, we have
\begin{equation*}
	(f,Sg)_{\mcal{K}}=(f,g)_{\mcal{K}}-(\Gamma g, S\Gamma f)_{\mcal{K}}=(f,g)_{\mcal{K}}-\big(f,\overline{S} g\big)_{\mcal{K}},\qquad f,g\in\mcal{K},
\end{equation*}
which implies that $\Gamma S \Gamma=1-S$. Since $\Gamma S \Gamma\ge 0$, we can see that $S\le 1$.

Conversely, given an operator $S\in\mcal{Q}(\mcal{K},\Gamma)$, we define the following operator
\begin{equation*}
	P_{S}=\left(
	\begin{matrix}
	S & S^{1/2}(1-S)^{1/2} \\
	S^{1/2}(1-S)^{1/2} & 1-S
	\end{matrix}
	\right)
\end{equation*}
acting on $\what{\mcal{K}}=\mcal{K}\oplus\mcal{K}$. Notice that the assumption $0\le S=S^{\ast}\le 1$ ensures that the square roots~$S^{1/2}$ and $(1-S)^{1/2}$ make sense. Let us equip this Hilbert space with the anti-unitary involution
\begin{equation*}
	\what{\Gamma}=\left(
	\begin{matrix}
	\Gamma & 0 \\
	0 & -\Gamma
	\end{matrix}
	\right).
\end{equation*}
Then, it can be checked that $P_{S}\in \mrm{Gr}\big(\what{\mcal{K}},\what{\Gamma}\big)$, which implies that the functional $\varphi_{P_{S}}$ defined by
\begin{equation*}
	\varphi_{P_{S}}(A)= (\bm{1},\pi_{P_{S}}(A)\bm{1} )_{\mcal{F}(P_{S}\what{\mcal{K}})},\qquad A\in \mcal{C}\big(\what{\mcal{K}},\what{\Gamma}\big)
\end{equation*}
is a quasi-free state over $\mcal{C}\big(\what{\mcal{K}},\what{\Gamma}\big)$. Now, since $\what{\Gamma}$ acts diagonally along the direct sum decomposition, $\mcal{C}(\mcal{K},\Gamma)$ is regarded as a subalgebra of $\mcal{C}\big(\what{\mcal{K}},\what{\Gamma}\big)$ and the restriction $\varphi_{S}=\varphi_{P_{S}}|_{\mcal{C}(\mcal{K},\Gamma)}$ is the quasi-free state corresponding to the given~$S$.
\end{proof}

\subsection{Proof of Proposition \ref{prop:existence_ppp}}
Now, we are at the position of proving Proposition~\ref{prop:existence_ppp}.
Our strategy is to check the criteria by Lenard \cite{Lenard1975a, Lenard1975b}.

\subsubsection{Lenard's criteria}
Suppose that a system of functions $\big\{\rho_{n}\colon\mfrak{X}^{n}\to\mbb{R}\big\}_{n=1}^{\infty}$ is given.
A question is if there exists a~probability measure~$M$ on $(\Omega,\Sigma)$ such that its $n$-point correlation function $\rho^{M}_{n}$ coincides with the given function $\rho_{n}$ for every $n\in\mbb{N}$.
Lenard \cite{Lenard1975a, Lenard1975b} clarified the necessary and sufficient conditions for this to happen.

\begin{Theorem}[\cite{Lenard1975a, Lenard1975b}]
A system of functions $\big\{\rho_{n}\colon \mfrak{X}^{n}\to\mbb{R}\big\}_{n=1}^{\infty}$ is a one of correlation functions for a probability measure on $(\Omega,\Sigma)$ if it possesses the following properties:
\begin{enumerate}\itemsep=0pt
\item[$1.$] {\it Symmetry:} each function $\rho_{n}$ is a symmetric function, i.e.,
	\begin{equation*}
		\rho_{n}(x_{\sigma (1)},\dots, x_{\sigma (n)})=\rho_{n}(x_{1},\dots, x_{n})
	\end{equation*}
	for arbitrary $\sigma\in\mfrak{S}_{n}$ and $x_{1},\dots, x_{n}\in\mfrak{X}$.
\item[$2.$] {\it Positivity:} for any system of functions $\bm{\Phi}=\big\{\Phi_{n}\colon \mfrak{X}^{n}\to\mbb{R}\big\}_{n=0}^{N}$ $(\Phi_{0}$ is understood as a~constant$)$ with finite support such that
	\begin{equation}\label{eq:positivity_test_function}
		\Phi_{0}+\sum_{n=1}^{N}\sum_{\substack{i_{1},\dots,i_{n}\in I \\ \mrm{distinct}}}\Phi_{n}(x_{i_{1}},\dots, x_{i_{n}})\ge 0
	\end{equation}
	holds for any $\omega=\{x_{i}\}_{i\in I}\in\Omega$, we have
	\begin{equation*}
		\Phi_{0}+\sum_{n=1}^{N}\sum_{x_{1},\dots, x_{n}\in\mfrak{X}}\Phi_{n}(x_{1},\dots, x_{n})\rho_{n}(x_{1},\dots, x_{n})\ge 0.
	\end{equation*}
\end{enumerate}
\end{Theorem}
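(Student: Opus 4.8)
The plan is to realize the sought measure as the Riesz--Markov representative of a positive, normalized linear functional defined on a dense subalgebra of $C(\Omega)$, the two hypotheses being exactly what is needed to produce and to control this functional. First I would introduce the algebra $\mcal{D}\subset C(\Omega)$ of local functions: the linear span of the monomials $g_{B}(\omega):=\prod_{z\in B}\omega (z)=\mathbf{1}[B\subseteq\omega]$, taken over finite subsets $B\subset\mfrak{X}$ (with $g_{\varnothing}=1$). Since $g_{B}g_{B'}=g_{B\cup B'}$ and the functions $g_{\{x\}}(\omega)=\omega (x)$ separate points, $\mcal{D}$ is a unital subalgebra, dense in $C(\Omega)$ by Stone--Weierstrass as $\Omega$ is compact. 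Expanding the defining product shows that each function $F=\Phi_{0}+\sum_{n\ge 1}\sum_{\mrm{distinct}}\Phi_{n}(x_{i_{1}},\dots,x_{i_{n}})$ attached to a finitely supported system $\bm{\Phi}=\{\Phi_{n}\}$ lies in $\mcal{D}$; grouping the distinct tuples by the set they span gives $F=\Phi_{0}+\sum_{B}\widetilde{\Phi}_{|B|}(B)g_{B}$, where $\widetilde{\Phi}_{n}(B)$ denotes the sum of $\Phi_{n}$ over the orderings of $B$, and conversely every element of $\mcal{D}$ is of this form.

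On $\mcal{D}$ I would set
\begin{equation*}
	L(F):=\Phi_{0}+\sum_{n\ge 1}\sum_{x_{1},\dots,x_{n}\in\mfrak{X}}\Phi_{n}(x_{1},\dots,x_{n})\rho_{n}(x_{1},\dots,x_{n}).
\end{equation*}
The first thing to verify is that $L$ is well defined: the monomials $\{g_{B}\}$ are linearly independent, so $\Phi_{0}$ and the coefficients $\widetilde{\Phi}_{|B|}(B)$ are determined by $F$ alone, and the \emph{symmetry} hypothesis lets one rewrite $\sum_{x}\Phi_{n}(x)\rho_{n}(x)=\sum_{|B|=n}\widetilde{\Phi}_{n}(B)\rho_{n}(B)$, so that $L(F)$ depends only on $F$. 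With the same uniqueness, the \emph{positivity} hypothesis asserts precisely that $F\ge 0$ on $\Omega$ forces $L(F)\ge 0$; thus $L$ is positive, and evidently $L(1)=1$.

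Finally I would turn $L$ into a measure. Applying positivity to $\|F\|_{\infty}\cdot 1\pm F\ge 0$ gives $|L(F)|\le\|F\|_{\infty}$ on $\mcal{D}$, so $L$ extends to a positive, normalized functional on all of $C(\Omega)$, and the Riesz--Markov theorem produces a Borel probability measure $M$ with $L(F)=\int_{\Omega}F\,{\rm d}M$. Evaluating on $F=g_{\{x_{1},\dots,x_{n}\}}=\mathbf{1}_{\Omega_{x_{1},\dots,x_{n}}}$ gives $\rho^{M}_{n}(x_{1},\dots,x_{n})=M(\Omega_{x_{1},\dots,x_{n}})=L(g_{\{x_{1},\dots,x_{n}\}})=\rho_{n}(x_{1},\dots,x_{n})$, so $M$ has the prescribed correlation functions. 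I expect the main obstacle to be the bookkeeping that links the two descriptions of $\mcal{D}$: one must prove the linear independence of the $g_{B}$ (equivalently, the invertibility of the containment matrix $[\mathbf{1}[B\subseteq B']]$ via M\"obius inversion) so that $L$ is unambiguous, and then recognize that the combinatorial inequality in the positivity hypothesis is nothing but the positivity of $L$ on nonnegative local functions. Once this dictionary is in place, the boundedness estimate, the continuous extension, and the Riesz--Markov step are all routine.
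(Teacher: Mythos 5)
The paper itself offers no proof of this statement: it is quoted from Lenard's papers and used as a black box, so your proposal can only be judged on its own terms. Your overall strategy is the right (and standard) one for the discrete setting: build the algebra $\mcal{D}$ of local functions spanned by the monomials $g_{B}$, turn the two hypotheses into positivity of a normalized linear functional $L$ on $\mcal{D}$, and then use density (Stone--Weierstrass), the bound $|L(F)|\le\|F\|_{\infty}$, and Riesz--Markov to produce the measure. The density, boundedness, extension, and final evaluation on indicators are all sound.

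There is, however, a genuine gap in your well-definedness step. The identity you invoke,
\begin{equation*}
	\sum_{x_{1},\dots,x_{n}\in\mfrak{X}}\Phi_{n}(x_{1},\dots,x_{n})\rho_{n}(x_{1},\dots,x_{n})=\sum_{|B|=n}\widetilde{\Phi}_{n}(B)\rho_{n}(B),
\end{equation*}
is false in general: the left-hand sum runs over \emph{all} tuples, including those with repeated coordinates, while the right-hand side collects only the tuples with distinct entries. The map $\bm{\Phi}\mapsto F$ forgets the values of $\Phi_{n}$ on tuples with repetitions (only distinct points of $\omega$ enter the defining sum of $F$), so two systems representing the same $F\in\mcal{D}$ may differ exactly there; if some $\rho_{n}$ were nonzero at such a tuple, the two evaluations of $L(F)$ would disagree. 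Concretely, if $\rho_{2}(x_{0},x_{0})\neq 0$, then $\Phi_{2}=\delta_{(x_{0},x_{0})}$ and $\Phi_{2}=0$ both represent $F=0$ but give $L=\rho_{2}(x_{0},x_{0})$ and $L=0$, respectively. The fix is short and comes from the positivity hypothesis itself: for any tuple $\bar{x}$ with a repeated entry, apply positivity to the system whose only nonzero member is $\Phi_{n}=\pm\delta_{\bar{x}}$; its combinatorial sum vanishes identically on $\Omega$ (distinct indices in a configuration give distinct points), hence is $\ge 0$, and the conclusion reads $\pm\rho_{n}(\bar{x})\ge 0$, i.e., $\rho_{n}(\bar{x})=0$. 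This observation is needed in any case, not just for well-definedness: by the paper's convention correlation functions vanish when two arguments coincide, so without it your constructed $M$ would satisfy $\rho^{M}_{n}=\rho_{n}$ only off the diagonal and the theorem's conclusion would not literally hold. Once this one-line lemma is inserted, your rewriting, the positivity of $L$, and the remainder of the argument go through.
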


Therefore, Proposition \ref{prop:existence_ppp} reduces to the following assertion.
\begin{Lemma}
\label{lem:existence_ppp}
Let $S\in\mcal{Q}(\mcal{K},\Gamma)$. Then the system of functions
\begin{equation*}
	\rho_{n}(x_{1},\dots, x_{n})=\pf \left[\mbb{K}_{S}(x_{i},x_{j})\right]_{1\le i,j\le n}, \qquad x_{1},\dots, x_{n}\in\mfrak{X}, \qquad n\in\mbb{N}
\end{equation*}
satisfies the symmetry and positivity conditions.
\end{Lemma}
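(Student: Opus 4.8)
The plan is to verify the two requirements separately: symmetry is an elementary property of the Pfaffian, while positivity is extracted from the positivity of the quasi-free state $\varphi_{S}$ provided by Lemma~\ref{lem:quasi-free_covariance}. Throughout, the anti-symmetry needed for $\pf[\mbb{K}_{S}(x_{i},x_{j})]$ to be defined is the one established in Remark~\ref{rem:check_anti-sym_matrix_func}. For the symmetry, note that under a permutation $\sigma\in\mfrak{S}_{n}$ of the arguments the matrix $[\mbb{K}_{S}(x_{\sigma(i)},x_{\sigma(j)})]_{1\le i,j\le n}$ is obtained from $[\mbb{K}_{S}(x_{i},x_{j})]_{1\le i,j\le n}$ by the simultaneous row- and column-permutation of the $2\times 2$ blocks, i.e., by $A\mapsto \Pi_{\sigma}A\Pi_{\sigma}^{\mrm{T}}$ with the block permutation matrix $\Pi_{\sigma}=I_{2}\otimes P_{\sigma}$. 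Since $\pf(\Pi A\Pi^{\mrm{T}})=\det(\Pi)\pf(A)$ and $\det(\Pi_{\sigma})=\det(I_{2})^{n}\det(P_{\sigma})^{2}=(\operatorname{sgn}\sigma)^{2}=1$, each Pfaffian is left invariant, so $\rho_{n}$ is symmetric.

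For the positivity, the starting point is the identity~(\ref{eq:quasifree_correlation_function}): together with the anticommutation relations it shows that, for distinct $x_{1},\dots,x_{n}$, the ordered product $a_{x_{1}}^{\ast}\cdots a_{x_{n}}^{\ast}a_{x_{n}}\cdots a_{x_{1}}$ collapses to the product of number operators $\prod_{i}a_{x_{i}}^{\ast}a_{x_{i}}$, which corresponds to $\chi_{\Omega_{x_{1},\dots,x_{n}}}$ under the embedding $C(\Omega)\hookrightarrow\mcal{C}(\mcal{K},\Gamma)$, whence
\[
	\rho_{n}(x_{1},\dots,x_{n})=\varphi_{S}\big(a_{x_{1}}^{\ast}a_{x_{1}}\cdots a_{x_{n}}^{\ast}a_{x_{n}}\big)=\varphi_{S}\big(\chi_{\Omega_{x_{1},\dots,x_{n}}}\big).
\]
Given a finitely supported test system $\bm{\Phi}=\{\Phi_{n}\}_{n=0}^{N}$ satisfying the pointwise inequality~(\ref{eq:positivity_test_function}), I would form the algebra element
\[
	\hat{G}=\Phi_{0}\cdot 1+\sum_{n=1}^{N}\sum_{\substack{y_{1},\dots,y_{n}\in\mfrak{X}\\ \mrm{distinct}}}\Phi_{n}(y_{1},\dots,y_{n})\,a_{y_{1}}^{\ast}a_{y_{1}}\cdots a_{y_{n}}^{\ast}a_{y_{n}}\in C(\Omega).
\]
Evaluated at a configuration $\omega=\{x_{i}\}_{i\in I}$, this $\hat{G}$ returns exactly the left-hand side of~(\ref{eq:positivity_test_function}), so the hypothesis means $\hat{G}\ge 0$ as a function on the compact space $\Omega$; hence $\hat{G}$ is a positive element of $C(\Omega)$, and therefore of $\mcal{C}(\mcal{K},\Gamma)$. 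Positivity of the state then yields $\varphi_{S}(\hat{G})\ge 0$. Evaluating $\varphi_{S}$ term by term and using $\rho_{n}=0$ whenever two arguments coincide (which lets me replace the sum over distinct tuples by the sum over all tuples) gives
\[
	\Phi_{0}+\sum_{n=1}^{N}\sum_{x_{1},\dots,x_{n}\in\mfrak{X}}\Phi_{n}(x_{1},\dots,x_{n})\rho_{n}(x_{1},\dots,x_{n})=\varphi_{S}(\hat{G})\ge 0,
\]
which is precisely the positivity condition.

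The main obstacle is not any hard estimate but the bookkeeping that makes positivity of the \emph{state} translate into positivity in the sense of Lenard: one must check carefully that $\hat{G}$ genuinely lies in the commutative subalgebra $C(\Omega)$ and that its pointwise values on $\Omega$ reproduce the expression in~(\ref{eq:positivity_test_function}), which rests entirely on the identification of products of number operators with indicator functions of cylinder sets. Once this dictionary is in place, the conclusion is an immediate consequence of the positivity of $\varphi_{S}$, the only remaining care being the passage between sums over distinct and over all tuples, which is handled by the vanishing convention for $\rho_{n}$ on coinciding points.
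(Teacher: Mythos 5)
Your proof is correct, and its skeleton coincides with the paper's: both reduce Lenard's positivity condition to positivity of the very same element $\hat{G}=A_{\bm{\Phi}}\in\mcal{C}(\mcal{K},\Gamma)$ (a finite combination of commuting number operators) and then invoke positivity of the quasi-free state $\varphi_{S}$ supplied by Lemma~\ref{lem:quasi-free_covariance}, together with the identity~(\ref{eq:quasifree_correlation_function}) and the vanishing of $\rho_{n}$ on coinciding points to handle the distinct-versus-all-tuples bookkeeping. Where you genuinely differ is in how positivity of that element is verified. The paper passes to the faithful Fock representation $\big(\pi_{P_{0}},\mcal{F}\big(\ell^{2}(\mfrak{X})\big)\big)$, shows that the orthonormal system $\{e_{\omega}\}_{\omega\in\Omega^{\circ}}$ diagonalizes $\pi_{P_{0}}(A_{\bm{\Phi}})$ with eigenvalues equal to the left-hand side of~(\ref{eq:positivity_test_function}) at \emph{finite} configurations, and concludes via spectral permanence in C$^{\ast}$-subalgebras. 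You instead stay inside the commutative subalgebra $C(\Omega)$: the hypothesis~(\ref{eq:positivity_test_function}) says exactly that $\hat{G}$, read as a continuous function on $\Omega$, is pointwise non-negative, hence positive in $C(\Omega)$ and therefore in the ambient algebra. Your route is shorter and representation-free, but it leans on the identification of the closed subalgebra generated by $\{a_{x}^{\ast}a_{x}\}_{x\in\mfrak{X}}$ with $C\big(\{0,1\}^{\mfrak{X}}\big)$, which the paper asserts in the introduction without proof; if one wants full self-containedness, a character-theoretic rephrasing (any character of a commutative C$^{\ast}$-algebra sends the commuting projections $a_{x}^{\ast}a_{x}$ into $\{0,1\}$ and thus defines a configuration at which the hypothesis applies) closes this gap without needing the spectrum to be all of $\Omega$. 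Conversely, the paper's Fock computation shows in passing that the hypothesis restricted to finite configurations $\Omega^{\circ}$ already suffices, which your argument does not exhibit. Your symmetry argument also differs: you use invariance of the Pfaffian under conjugation by the block permutation $I_{2}\otimes P_{\sigma}$, whose determinant is $(\operatorname{sgn}\sigma)^{2}=1$, whereas the paper simply notes that the ordered product $a_{x_{1}}^{\ast}\cdots a_{x_{n}}^{\ast}a_{x_{n}}\cdots a_{x_{1}}$ is literally unchanged under permutations; both are valid.
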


\subsubsection{Proof of Lemma \ref{lem:existence_ppp}}
As we have seen, each correlation function is realized as
\begin{equation*}
	\rho_{n}(x_{1},\dots, x_{n})=\varphi_{S}\big(a_{x_{1}}^{\ast}\cdots a_{x_{n}}^{\ast}a_{x_{n}}\cdots a_{x_{1}}\big).
\end{equation*}
Then, the symmetry condition is obviously satisfied since
\begin{equation*}
	a_{x_{1}}^{\ast}\cdots a_{x_{n}}^{\ast}a_{x_{n}}\cdots a_{x_{1}}
	=a_{x_{\sigma (1)}}^{\ast}\cdots a_{x_{\sigma (n)}}^{\ast}a_{x_{\sigma (n)}}\cdots a_{x_{\sigma (1)}}
\end{equation*}
holds for all $\sigma\in\mfrak{S}_{n}$.

To show the positivity condition, let $\bm{\Phi}=\big\{\Phi_{n}\colon\mfrak{X}^{n}\to\mbb{R}\big\}_{n=0}^{N}$ be a system of functions satisfying~(\ref{eq:positivity_test_function}).
We can see that
\begin{equation*}
	\Phi_{0}+\sum_{n=1}^{N}\sum_{x_{1},\dots, x_{n}\in\mfrak{X}}\Phi_{n}(x_{1},\dots, x_{n})\rho_{n}(x_{1},\dots, x_{n})
	=\varphi_{S} (A_{\bm{\Phi}} ),
\end{equation*}
where we set
\begin{equation*}
	A_{\bm{\Phi}}=\Phi_{0}+\sum_{n=1}^{N}\sum_{x_{1},\dots,x_{n}\in\mfrak{X}}\Phi_{n}(x_{1},\dots,x_{n})a_{x_{1}}^{\ast}\cdots a_{x_{n}}^{\ast}a_{x_{n}}\cdots a_{x_{1}}.
\end{equation*}
Therefore, owing to the positivity of a state over a C$^{\ast}$-algebra, it suffices to show that $A_{\bm{\Phi}}\in\mcal{C}(\mcal{K},\Gamma)$ is a positive element, which can be checked in any faithful representation. In fact, when we take a faithful representation $(\pi,\mcal{H})$, we may regard $\mcal{C}(\mcal{K},\Gamma)$ as a C$^{\ast}$-subalgebra of $\mfrak{B}(\mcal{H})$ via the embedding~$\pi$. Since the spectrum of an element in a C$^{\ast}$-subalgebra coincides with that in a~whole algebra (see, e.g., \cite[Proposition~2.2.7]{BratteliRobinson1987}), the relevant element $A_{\bm{\Phi}}\in\mcal{C}(\mcal{K},\Gamma)$ is positive if and only if $\pi \left(A_{\bm{\Phi}}\right)$ is a positive operator on $\mcal{H}$.

We can, in particular, take a Fock representation $\big(\pi_{P_{0}},\mcal{F}\big(\ell^{2}(\mfrak{X})\big)\big)$.
\begin{Lemma}
In the Fock space $\mcal{F}\big(\ell^{2}(\mfrak{X})\big)$, the complete orthonormal system $\{e_{\omega}\}_{\omega\in\Omega^{\circ}}$ diagonalizes $\pi_{P_{0}} (A_{\bm{\Phi}} )$ so that
\begin{equation*}
	\pi_{P_{0}}\left(A_{\bm{\Phi}}\right) e_{\omega}=\left(\Phi_{0}+\sum_{k=1}^{N}\sum_{\substack{i_{1},\dots,i_{k}=1\\ \mrm{distinct}}}^{n}\Phi_{k}(x_{i_{1}},\dots, x_{i_{k}})\right) e_{\omega},\qquad \omega=\{x_{1}>\cdots >x_{n}\},
\end{equation*}
\end{Lemma}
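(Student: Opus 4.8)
The plan is to reduce $\pi_{P_0}(A_{\bm\Phi})$ to a sum of mutually commuting number operators acting diagonally on the basis $\{e_\omega\}_{\omega\in\Omega^{\circ}}$. First I would recall that under $\pi_{P_0}$ the generators become the standard creation and annihilation operators on $\mcal{F}\big(\ell^{2}(\mfrak{X})\big)$, so that $\pi_{P_0}(a_x^*)=a^{*}_{\ell^{2}(\mfrak{X})}(e_x)$ and $\pi_{P_0}(a_x)=a_{\ell^{2}(\mfrak{X})}(e_x)$, subject to $\{a_x,a_y^*\}=\delta_{x,y}$ and $\{a_x,a_y\}=\{a_x^*,a_y^*\}=0$. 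The basic diagonal operator is the number operator $n_x:=\pi_{P_0}(a_x^*a_x)$: since $a_x e_\omega=0$ when $x\notin\omega$, while for $x\in\omega$ the sign produced by deleting $e_x$ is exactly undone by the sign from re-inserting it, one gets $n_x e_\omega=e_\omega$ if $x\in\omega$ and $n_x e_\omega=0$ otherwise.

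Next I would prove the key algebraic identity that for \emph{distinct} points $y_1,\dots,y_k$,
\begin{equation*}
	\pi_{P_0}\big(a_{y_1}^*\cdots a_{y_k}^* a_{y_k}\cdots a_{y_1}\big)=n_{y_1}\cdots n_{y_k}.
\end{equation*}
I would argue by induction on $k$: the rightmost factor $a_{y_1}$ is slid to the left until it sits immediately after $a_{y_1}^*$, passing the $k-1$ annihilation operators $a_{y_2},\dots,a_{y_k}$ and then the $k-1$ creation operators $a_{y_k}^*,\dots,a_{y_2}^*$; as every site involved differs from $y_1$, each of these $2(k-1)$ exchanges contributes a factor $-1$, so the net sign is $+1$ and the expression factors as $n_{y_1}\,\big(a_{y_2}^*\cdots a_{y_k}^* a_{y_k}\cdots a_{y_2}\big)$. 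The inductive hypothesis then finishes the claim. Since the $n_{y_j}$ are commuting projections, their product satisfies $n_{y_1}\cdots n_{y_k}e_\omega=e_\omega$ when $\{y_1,\dots,y_k\}\subset\omega$ and vanishes otherwise.

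Finally I would assemble the sum defining $A_{\bm\Phi}$. Any term whose indices $x_1,\dots,x_n$ are not pairwise distinct vanishes, because a repeated creation operator forces $a_{x_1}^*\cdots a_{x_n}^*=0$ via $(a_x^*)^2=0$; hence the summation effectively runs over distinct tuples, and each surviving term acts on $e_\omega$ through the identity above. For $\omega=\{x_1>\cdots>x_n\}$, a $k$-tuple $(y_1,\dots,y_k)$ contributes $\Phi_k(y_1,\dots,y_k)$ exactly when $\{y_1,\dots,y_k\}\subset\omega$, and reindexing such tuples as $y_j=x_{i_j}$ with distinct $i_1,\dots,i_k\in\{1,\dots,n\}$ yields precisely the stated eigenvalue. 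The only point demanding care is the sign bookkeeping in the inductive step, but since the annihilation and creation blocks each produce $k-1$ sign flips, their total is always even and no obstruction arises.
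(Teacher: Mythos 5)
Your proposal is correct and follows essentially the same route as the paper's proof: both reduce $A_{\bm\Phi}$ to a sum over distinct tuples of products of number operators $a_{x}^{\ast}a_{x}$ (using that repeated points kill the term) and then use the diagonal action $\pi_{P_{0}}(a_{x}^{\ast}a_{x})e_{\omega}=\chi_{[x\in\omega]}e_{\omega}$. The paper compresses the reordering identity $a_{y_{1}}^{\ast}\cdots a_{y_{k}}^{\ast}a_{y_{k}}\cdots a_{y_{1}}=\prod_{j}a_{y_{j}}^{\ast}a_{y_{j}}$ into the phrase ``direct computation,'' whereas you supply the inductive sign bookkeeping explicitly, which is a faithful filling-in of that step rather than a different argument.
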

\begin{proof}
This follows from a direct computation.
Let us notice that $a_{x_{1}}^{\ast}\cdots a_{x_{n}}^{\ast}a_{x_{n}}\cdots a_{x_{1}}=0$ if any two points from $x_{1},\dots, x_{n}$ coincide. Hence, we have
\begin{equation*}
	A_{\bm{\Phi}}=\Phi_{0}+\sum_{n=1}^{N}\sum_{\substack{x_{1},\dots,x_{n}\in\mfrak{X}\\ \mrm{distinct}}}\Phi_{n}(x_{1},\dots,x_{n})\prod_{i=1}^{n}a_{x_{i}}^{\ast}a_{x_{i}}.
\end{equation*}
We can also see that $\pi_{P_{0}}(a_{x}^{\ast}a_{x})e_{\omega}=\chi_{[x\in\omega]}e_{\omega}$, $x\in\mfrak{X}$, $\omega\in\Omega^{\circ}$. Therefore, the desired result is obtained.
\end{proof}

The eigenvalues of $\pi_{P_{0}} (A_{\bm{\Phi}} )$ are non-negative from the assumption implying that $A_{\bm{\Phi}}$ is a positive element, and therefore, the system of functions $\{\rho_{n}\}_{n=1}^{\infty}$ fulfills the positivity conditions.
Now, the proof is complete.

\subsection{Restatement in terms of the Fredholm Pfaffian}
Here, we see the equivalence between Propositions \ref{prop:existence_ppp} and \ref{prop:existence_ppp_fredholm}.
First, notice that a multiplicative functional $\Psi_{\alpha}$ associated with a function $\alpha$ is identified with
\begin{equation*}
	\Psi_{\alpha}=\prod_{x\in\mfrak{X}}\left(\alpha (x)a_{x}^{\ast}a_{x}+a_{x}a_{x}^{\ast}\right)
\end{equation*}
in $C(\Omega)\subset \mcal{C}(\mcal{K},\Gamma)$. Since $\alpha-1$ is finitely supported and $\{a_{x},a_{x}^{\ast}\}=1$, factors except for finitely many ones in the product are unity. We can see that $\Psi_{\alpha}$ can also be expressed as
\begin{equation}
\label{eq:multiplicative_functional_expansion}
	\Psi_{\alpha}=\prod_{x\in\mfrak{X}}\big((\alpha(x)-1)a_{x}^{\ast}a_{x}+1\big)=1+\sum_{X\subset\mfrak{X}}\prod_{x\in X}(\alpha (x)-1)a_{x}^{\ast}a_{x}.
\end{equation}
Therefore, it is immediate that, for the the probability measure $M^{S}$ associated with $S\in\mcal{Q}(\mcal{K},\Gamma)$ in the sense of Proposition \ref{prop:existence_ppp}, the expectation value of $\Psi_{\alpha}$ is
\begin{equation*}
	\int_{\Omega}\Psi_{\alpha}(\omega)M^{S}({\rm d}\omega)=1+\sum_{X\subset\mfrak{X}}\prod_{x\in X}(\alpha (x)-1) \pf [\mbb{K}_{S}(x,y) ]_{x,y\in X}.
\end{equation*}
When we write ${\bf D}_{\sqrt{\alpha-1}}\colon \mfrak{X}\times\mfrak{X}\to M(2;\mbb{C})$ for the matrix-value function
\begin{equation*}
	{\bf D}_{\sqrt{\alpha-1}}(x,y):=\delta_{x,y}\left(
		\begin{matrix}
			\sqrt{\alpha (x)-1} & 0 \\
			0 & \sqrt{\alpha (x)-1}
		\end{matrix}
	\right),
\end{equation*}
we have
\[ \prod_{x\in X}(\alpha(x)-1)=\det [{\bf D}_{\sqrt{\alpha-1}}(x,y)]_{x,y\in X}.\] Due to the formula $\pf \big(B^{\mrm{T}}AB\big)=(\det B)(\pf A)$ for an anti-symmetric matrix $A$ and an arbitrary matrix $B$ of the same size, we have
\begin{equation*}
	\int_{\Omega}\Psi_{\alpha}(\omega)M^{S}({\rm d}\omega)=1+\sum_{X\subset\mfrak{X}}\pf\big[\big(\sqrt{\alpha -1}\mbb{K}_{S}\sqrt{\alpha -1}\big)(x,y)\big]_{x,y\in X}=\pf [\mbb{J}+\mbb{K}_{S} ]_{\mfrak{X}}
\end{equation*}
as has also been shown in~\cite{Rains2000}.

Conversely, let $M^{S}$ be the probability measure associated with $S\in \mcal{Q}(\mcal{K},\Gamma)$ in the sense of Proposition~\ref{prop:existence_ppp_fredholm}.
For a finite subset $X\subset\mfrak{X}$, we set $\alpha_{X}=\chi_{X}+1$, where $\chi_{X}$ is the characteristic function of~$X$. Then $\alpha_{X}-1$ is finitely supported and $\alpha(x)\ge 1$, $x\in\mfrak{X}$. Owing to the expression~(\ref{eq:multiplicative_functional_expansion}), we have
\begin{equation*}
	\Psi_{\alpha_{X}}=1+\sum_{Y=\{y_{1},\dots, y_{n}\}\subset X}\chi_{\Omega_{y_{1},\dots, y_{n}}},
\end{equation*}
and therefore,
\begin{equation*}
	\int_{\Omega}\Psi_{\alpha_{X}}(\omega)M^{S}({\rm d}\omega)=1+\sum_{Y=\{y_{1},\dots, y_{n}\}\subset X}\rho^{M^{S}}(y_{1},\dots, y_{n}).
\end{equation*}
On the other hand, from the characterization of $M^{S}$, it follows that
\begin{equation*}
	\int_{\Omega}\Psi_{\alpha_{X}}(\omega) M^{S}({\rm d}\omega)=1+\sum_{Y\subset X}\pf [\mbb{K}_{S}(x,y) ]_{x,y\in Y}.
\end{equation*}
If we define vectors
\begin{gather*}
	{\bf v}_{S} =\big(\rho^{M^{S}}(x_{1},\dots, x_{n})\big)_{X=\{x_{1},\dots, x_{n}\}\subset\mfrak{X}},\qquad {\bf w}_{S} =\big(\pf [\mbb{K}_{S}(x,y) ]_{x,y\in X}\big)_{X\subset\mfrak{X}}
\end{gather*}
with understanding $ ({\bf v}_{S} )_{\varnothing}= ({\bf w}_{S} )_{\varnothing}=1$ and a matrix $A=(A_{X,Y})_{X, Y\subset \mfrak{X}}$ by
\begin{equation*}
	A_{X,Y}=\begin{cases}
	1, & X\supset Y, \\
	0, & \mbox{otherwise},
	\end{cases}
\end{equation*}
we have $A{\bf v}_{S}=A{\bf w}_{S}$. Since the matrix $A$ is triangular with respect to the partial order induced from the inclusion relation with unit diagonal, it is invertible. Therefore, ${\bf v}_{S}={\bf w}_{S}$ implying that
\begin{equation*}
	\rho^{M^{S}}(x_{1},\dots, x_{n})=\pf [\mbb{K}_{S}(x_{i},x_{j}) ]_{1\le i,j\le n},\qquad x_{1},\dots, x_{n}\in\mfrak{X}\colon \ \mbox{distinct},
\end{equation*}
which is the desired property.

\subsection{Unitarily implementable Bogoliubov automorphisms}
\subsubsection{Bogoliubov automorphisms}
Let us consider the following collection of operators:
\begin{equation*}
	\mcal{I}(\mcal{K},\Gamma)=\big\{V\in \mfrak{U}(\mcal{K})\,|\, \overline{V}=V\big\},
\end{equation*}
where $\mfrak{U}(\mcal{K})$ is the set of unitary operators on $\mcal{K}$. It obviously forms a group.
Given an operator $V\in \mcal{I}(\mcal{K},\Gamma)$, we can define an automorphism $\alpha_{V}$ of $\mcal{C}(\mcal{K},\Gamma)$ by $\alpha_{V}(B(f)):=B(Vf)$, $f\in \mcal{K}$, which is called the Bogoliubov automorphism associated with $V$.
When we have a state~$\varphi$ over~$\mcal{C}(\mcal{K},\Gamma)$, we can twist it by a Bogoliubov automorphism to obtain a new state $\varphi\circ \alpha_{V}$, which defines a right action of the group $\mcal{I}(\mcal{K},\Gamma)$ on the collection of states. On the other hand, the group $\mcal{I}(\mcal{K},\Gamma)$ also acts on $\mcal{Q}(\mcal{K},\Gamma)$ from the right via
\begin{equation*}
	\mcal{Q}(\mcal{K},\Gamma)\times \mcal{I}(\mcal{K},\Gamma)\to \mcal{Q}(\mcal{K},\Gamma);\qquad (S,V)\mapsto V^{\ast}SV.
\end{equation*}
When the state is quasi-free, then these two actions are compatible:
\begin{Lemma}
For $S\in \mcal{Q}(\mcal{K},\Gamma)$ and $V\in \mcal{I}(\mcal{K},\Gamma)$, we have $\varphi_{S}\circ \alpha_{V}=\varphi_{V^{\ast}SV}$.
\end{Lemma}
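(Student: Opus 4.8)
The plan is to use that a quasi-free state is completely determined by its two-point function, as recorded in Lemma~\ref{lem:quasi-free_covariance}: if I can show that $\varphi_S\circ\alpha_V$ is again quasi-free and that its covariance operator is $V^{\ast}SV$, then the asserted equality $\varphi_S\circ\alpha_V=\varphi_{V^{\ast}SV}$ follows at once from the injectivity of the correspondence $S\mapsto\varphi_S$. I first note that $V^{\ast}SV$ indeed lies in $\mcal{Q}(\mcal{K},\Gamma)$, which is precisely the assertion that the right action $(S,V)\mapsto V^{\ast}SV$ is well defined; concretely, conjugation by the unitary $V$ preserves self-adjointness together with the operator bounds $0\le\,\cdot\,\le 1$, while $\overline{V}=V$ and $\overline{V^{\ast}}=(\overline{V})^{\ast}=V^{\ast}$ give $\overline{V^{\ast}SV}=V^{\ast}\overline{S}V=V^{\ast}(1-S)V=1-V^{\ast}SV$.

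Next I would verify that $\varphi_S\circ\alpha_V$ is quasi-free. Since $\alpha_V$ is a $\ast$-automorphism sending each generator $B(f)$ to $B(Vf)$, for any $f_1,\dots,f_m\in\mcal{K}$ one has $(\varphi_S\circ\alpha_V)(B(f_1)\cdots B(f_m))=\varphi_S(B(Vf_1)\cdots B(Vf_m))$. Applying Wick's formula for $\varphi_S$ to the right-hand side with the vectors $Vf_1,\dots,Vf_m$, the odd moments vanish and each even moment becomes the same signed sum of products of two-point functions $\varphi_S(B(Vf_{\sigma(i)})B(Vf_{\sigma(i+n)}))=(\varphi_S\circ\alpha_V)(B(f_{\sigma(i)})B(f_{\sigma(i+n)}))$. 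Hence $\varphi_S\circ\alpha_V$ itself obeys Wick's formula and is therefore quasi-free.

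It then remains to compute the covariance of $\varphi_S\circ\alpha_V$. Using that $\alpha_V$ respects the involution, $\alpha_V(B(f)^{\ast})=\alpha_V(B(f))^{\ast}=B(Vf)^{\ast}$, so
\begin{equation*}
	(\varphi_S\circ\alpha_V)\big(B(f)^{\ast}B(g)\big)=\varphi_S\big(B(Vf)^{\ast}B(Vg)\big)=(Vf,SVg)_{\mcal{K}}=(f,V^{\ast}SVg)_{\mcal{K}}
\end{equation*}
for all $f,g\in\mcal{K}$, the middle equality being the defining property of $\varphi_S$ in Lemma~\ref{lem:quasi-free_covariance}. Thus the covariance operator of the quasi-free state $\varphi_S\circ\alpha_V$ is exactly $V^{\ast}SV$, and the uniqueness part of Lemma~\ref{lem:quasi-free_covariance} forces $\varphi_S\circ\alpha_V=\varphi_{V^{\ast}SV}$. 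There is no serious obstacle in this argument; the only points demanding a line of care are checking $\overline{V^{\ast}SV}=1-V^{\ast}SV$, so that the target state is even defined, and observing that quasi-freeness is transported through $\alpha_V$ precisely because the automorphism acts linearly at the level of generators, leaving the combinatorial shape of Wick's formula intact.
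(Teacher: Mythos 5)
Your proposal is correct and follows essentially the same route as the paper: the core step is the identical two-point-function computation $(\varphi_S\circ\alpha_V)(B(f)^{\ast}B(g))=(Vf,SVg)_{\mcal{K}}=(f,V^{\ast}SVg)_{\mcal{K}}$, combined with the uniqueness of a quasi-free state given its covariance operator from Lemma~\ref{lem:quasi-free_covariance}. The only difference is that you spell out the checks the paper leaves implicit, namely that $V^{\ast}SV\in\mcal{Q}(\mcal{K},\Gamma)$ and that $\varphi_S\circ\alpha_V$ is again quasi-free.
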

\begin{proof}
It is easily checked that
\begin{equation*}
	\varphi_{S}\circ \alpha_{V}(B(f)^{\ast}B(g))=\varphi_{S}(B(Vf)^{\ast}B(Vg))=(Vf,SVg)_{\mcal{K}}=\varphi_{V^{\ast}SV}(B(f)^{\ast}B(g))
\end{equation*}
for any $f,g\in\mcal{K}$, which implies the desired equality.
\end{proof}

It is obvious that the group action by $\mcal{I}(\mcal{K},\Gamma)$ preserves the collection $\mrm{Gr}(\mcal{K},\Gamma)$ of projection operators. Moreover, we have the following:
\begin{Lemma}
The group $\mcal{I}(\mcal{K},\Gamma)$ acts on $\mrm{Gr}(\mcal{K},\Gamma)$ transitively.
\end{Lemma}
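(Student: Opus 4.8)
The plan is to reduce transitivity to showing that every $P\in\mrm{Gr}(\mcal{K},\Gamma)$ lies in the $\mcal{I}(\mcal{K},\Gamma)$-orbit of the fixed base point $P_0$ from~\eqref{eq:projection_vacuum}. Since $\mcal{I}(\mcal{K},\Gamma)$ is a group and $(S,V)\mapsto V^{\ast}SV$ is a right action, once I produce for each $P$ a $V_{P}\in\mcal{I}(\mcal{K},\Gamma)$ with $V_{P}P_{0}V_{P}^{\ast}=P$, transitivity on all of $\mrm{Gr}(\mcal{K},\Gamma)$ follows by composing the elements carrying $P$ and $P'$ respectively to $P_{0}$ (explicitly $W=V_{P}V_{P'}^{\ast}$ satisfies $W^{\ast}PW=P'$).

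First I would extract the structural content of the defining relation $\overline{P}=1-P$. Rewriting it as $\Gamma P=(1-P)\Gamma$ shows that $\Gamma$ restricts to an anti-unitary bijection of $P\mcal{K}$ onto $(1-P)\mcal{K}$, with inverse its own restriction to $(1-P)\mcal{K}$ (using $\Gamma^{2}=1$). Hence $\dim P\mcal{K}=\dim(1-P)\mcal{K}$; since $\mcal{K}$ is separable, infinite-dimensional, and $P\mcal{K}\oplus(1-P)\mcal{K}=\mcal{K}$, a cardinality count forces both summands to be separable and infinite-dimensional. The same applies to $P_{0}$, whose range is the first copy of $\ell^{2}(\mfrak{X})$. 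Therefore $P_{0}\mcal{K}$ and $P\mcal{K}$ are unitarily isomorphic as Hilbert spaces, and I may fix some unitary $U_{0}\colon P_{0}\mcal{K}\to P\mcal{K}$.

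Next I would build the intertwiner by hand. I set $V|_{P_{0}\mcal{K}}=U_{0}$ and extend to $(1-P_{0})\mcal{K}$ by forcing $\Gamma$-equivariance: each $\eta\in(1-P_{0})\mcal{K}$ is uniquely $\eta=\Gamma\xi$ with $\xi\in P_{0}\mcal{K}$, and I define $V\eta:=\Gamma U_{0}\xi=\Gamma V\xi$. As $U_{0}\xi\in P\mcal{K}$ and $\Gamma$ maps $P\mcal{K}$ onto $(1-P)\mcal{K}$, this sends $(1-P_{0})\mcal{K}$ onto $(1-P)\mcal{K}$. I would then verify three points: that $V$ is unitary (on the second summand $V=\Gamma U_{0}\Gamma$ is linear, being a composite of two anti-linear factors, and norm-preserving, while the two summands are carried to the mutually orthogonal pieces $P\mcal{K}$ and $(1-P)\mcal{K}$); that $V\Gamma=\Gamma V$, i.e.\ $\overline{V}=V$, so $V\in\mcal{I}(\mcal{K},\Gamma)$, which is immediate on each summand together with $\Gamma^{2}=1$; and finally that, since $V$ carries $P_{0}\mcal{K}$ onto $P\mcal{K}$ and $(1-P_{0})\mcal{K}$ onto $(1-P)\mcal{K}$, the projection onto $VP_{0}\mcal{K}$ is $VP_{0}V^{\ast}=P$, so $P$ and $P_{0}$ lie in the same orbit.

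The main obstacle is the $\Gamma$-equivariance of $V$: the free choice of $U_{0}$ is available only on $P_{0}\mcal{K}$, and the extension to the complement is fully determined by $V\Gamma=\Gamma V$; I must check that this determined extension is consistent and lands in $(1-P)\mcal{K}$, which is precisely where the relation $\overline{P}=1-P$ (for both $P$ and $P_{0}$) is used. The dimension count is routine but essential, since without it no unitary $U_{0}$ exists, and it is the only step where the separability and infinite-dimensionality of $\mcal{K}$ enter.
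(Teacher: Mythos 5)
Your proof is correct and is essentially the paper's own argument in basis-free form: the paper chooses complete orthonormal systems $\{f_{i}\}_{i\in I}$ of $P\mcal{K}$ and $\{g_{i}\}_{i\in I}$ of $\pr{P}\mcal{K}$ and defines $V$ by $f_{i}\mapsto g_{i}$, $\Gamma f_{i}\mapsto \Gamma g_{i}$, which is precisely your unitary $U_{0}$ between the ranges extended $\Gamma$-equivariantly by $\Gamma U_{0}\Gamma$ on the complementary summand. Your reduction to the base point $P_{0}$ and your explicit dimension count (which the paper leaves implicit in indexing both orthonormal systems by the same set $I$) are minor repackagings rather than a different route.
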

\begin{proof}
For projection operators $P, \pr{P}\in \mrm{Gr}(\mcal{K},\Gamma)$, let us take complete orthonormal systems $\{f_{i}\}_{i\in I}$ and $\{g_{i}\}_{i\in I}$ of $P\mcal{K}$ and $\pr{P}\mcal{K}$, respectively. Then $\{f_{i},\Gamma f_{i}\}_{i\in I}$ and $\{g_{i},\Gamma g_{i}\}_{i\in I}$ are both complete orthonormal systems of $\mcal{K}$. If we define an operator $V$ by
\begin{equation*}
	V\sum_{i\in I} (a_{i}f_{i}+b_{i}\Gamma f_{i} ):=\sum_{i\in I}(a_{i}g_{i}+b_{i}\Gamma g_{i}),\qquad a_{i}, b_{i}\in\mbb{C},\qquad i\in I,
\end{equation*}
it is a unitary operator such that $V^{\ast}\pr{P}V=P$ and commutes with $\Gamma$.
\end{proof}

\subsubsection{Unitary implementability}
Let $P\in \mrm{Gr}(\mcal{K},\Gamma)$ be a projection and take an operator $V\in \mcal{I}(\mcal{K},\Gamma)$.
We say that the Bogoliubov automorphism $\alpha_{V}$ is unitarily implementable on the Fock representation $(\pi_{P},\mcal{F}(P\mcal{K}))$ if there exists a unitary operator $\mcal{U}$ on $\mcal{F}(P\mcal{K})$ such that
\begin{equation*}
	\pi_{P}\circ \alpha_{V}=\mrm{Ad}(\mcal{U}^{\ast})\circ \pi_{P}.
\end{equation*}
Since $\alpha_{V}$ is an automorphism, $\pi_{P}\circ\alpha_{V}$ is an irreducible representation and is identified with the Fock representation $\pi_{V^{\ast}PV}$.
It is known that the unitary implementability is equivalent to the quasi-equivalence of two representations $\pi_{P}\circ \alpha_{V}$ and $\pi_{P}$, which is, therefore, equivalent to the quasi-equivalence of quasi-free states $\varphi_{V^{\ast}PV}$ and $\varphi_{P}$.

The following criterion is well-known:
\begin{Theorem}[\cite{Araki1970, PowersStormer1970, ShaleStinespring1965}]
\label{thm:implementability}
Let $P, \pr{P}\in \mrm{Gr}(\mcal{K},\Gamma)$ be projection operators and take an operator $V\in \mcal{I}(\mcal{K},\Gamma)$ such that $\pr{P}=V^{\ast}PV$.
Then $V$ is unitarily implementable on the Fock representation $(\pi_{P},\mcal{F}(P\mcal{K}))$ if and only if $P-\pr{P}$ is of Hilbert--Schmidt class.
\end{Theorem}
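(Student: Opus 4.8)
The plan is to build on the reduction already recorded above: implementability of $\alpha_V$ on $(\pi_{P},\mcal{F}(P\mcal{K}))$ is equivalent to the quasi-equivalence of $\pi_{P}$ and $\pi_{P}\circ\alpha_V=\pi_{V^{\ast}PV}=\pi_{\pr{P}}$, and since both are irreducible Fock representations this is the same as their unitary equivalence. Thus the task is to read off, in terms of $P-\pr{P}$, exactly when an intertwining unitary exists, and the first move is to translate the condition on $P-\pr{P}$ into a Hilbert--Schmidt condition on a single off-diagonal block of $V$.

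To do this I would pass to block form relative to $\mcal{K}=P\mcal{K}\oplus\overline{P}\mcal{K}$ (recall $\overline{P}=1-P$):
\[
	V=\left(\begin{matrix} A & B \\ C & D \end{matrix}\right),\qquad A=PVP,\quad B=PV\overline{P},\quad C=\overline{P}VP,\quad D=\overline{P}V\overline{P}.
\]
Using $P=P^{\ast}=P^{2}$ one computes $\pr{P}=V^{\ast}PV=\left(\begin{matrix} A^{\ast}A & A^{\ast}B \\ B^{\ast}A & B^{\ast}B\end{matrix}\right)$, and the unitarity relation $V^{\ast}V=1$ gives $1-A^{\ast}A=C^{\ast}C$, so that
\[
	P-\pr{P}=\left(\begin{matrix} C^{\ast}C & -A^{\ast}B \\ -B^{\ast}A & -B^{\ast}B \end{matrix}\right).
\]
Hence $P-\pr{P}$ is Hilbert--Schmidt if and only if both $B$ and $C$ are. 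Moreover the constraint $\overline{V}=V$, i.e., $\Gamma V=V\Gamma$, together with $\Gamma P\Gamma=\overline{P}$, yields $B=\Gamma C\Gamma$; since $\Gamma$ is anti-unitary, $B$ is Hilbert--Schmidt exactly when $C$ is. Therefore the hypothesis on $P-\pr{P}$ is equivalent to the single statement that the anomalous block $C=\overline{P}VP$ (equivalently $B$) is Hilbert--Schmidt.

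It then remains to establish the classical equivalence between implementability of $\alpha_V$ on $\pi_{P}$ and the Hilbert--Schmidt property of this block. For sufficiency I would exhibit the implementer explicitly: from the blocks of $V$ one forms an anti-symmetric Hilbert--Schmidt operator $T$ on $P\mcal{K}$ (the Thouless kernel, morally $T=CA^{-1}$, which is legitimate because $A^{\ast}A=1-C^{\ast}C$ makes $A$ Fredholm), and sets $\mcal{U}=c\,\exp\bigl(\tfrac{1}{2}\sum_{i,j}T_{ij}a^{\ast}_{i}a^{\ast}_{j}\bigr)$ up to a further one-particle factor and a normalization $c$; one then checks that $\mcal{U}$ intertwines $\pi_{P}$ with $\pi_{\pr{P}}$ and is unitary. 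For necessity, an implementer would send the Fock vacuum to $\mcal{U}\bm{1}$, a vector whose normalizability is controlled by the infinite product $\prod_{j}(1+t_{j}^{2})$ over the singular values $t_{j}$ of $T$; this product converges to a finite value precisely when $\sum_{j}t_{j}^{2}<\infty$, i.e., when $T$ (equivalently $C$) is Hilbert--Schmidt, and otherwise $\mcal{U}\bm{1}$ leaves $\mcal{F}(P\mcal{K})$, obstructing implementability.

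The analytic heart of the argument, and the step I expect to be the main obstacle, is the sufficiency direction: verifying that the formal exponential $\mcal{U}=c\exp(\cdots)$ is a genuine unitary on Fock space and that the attendant sums and infinite products converge --- which is exactly where the Hilbert--Schmidt hypothesis is indispensable. This is the content of the Shale--Stinespring and Powers--St\o{}rmer analysis, and rather than reproduce the full estimates I would either invoke the Powers--St\o{}rmer inequality, which bounds the norm distance of the quasi-free states $\varphi_{P}$ and $\varphi_{\pr{P}}$ in terms of $\|P-\pr{P}\|_{2}$, or simply cite \cite{Araki1970, PowersStormer1970, ShaleStinespring1965} for the equivalence between quasi-equivalence of the two Fock states and the Hilbert--Schmidt condition.
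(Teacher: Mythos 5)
The paper itself offers no proof of this theorem: it is quoted as a classical criterion, with the work delegated entirely to the cited references, and the reduction you open with (implementability $\Leftrightarrow$ quasi-equivalence of $\pi_{P}$ and $\pi_{P}\circ\alpha_{V}=\pi_{\pr{P}}$, which for irreducible representations is unitary equivalence) is precisely the remark the paper makes in the sentences preceding the statement. So your proposal must stand on its own; its latter half (Thouless exponential for sufficiency, vacuum-overlap product for necessity) is a fair sketch of the standard argument, but you explicitly defer the analytic content to the same references the paper cites, which is legitimate and matches what the paper does.

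There is, however, a genuine gap in the one step you do carry out in detail, namely the equivalence ``$P-\pr{P}$ Hilbert--Schmidt $\Leftrightarrow$ $B$ and $C$ Hilbert--Schmidt''. You read this off from the block matrix
\begin{equation*}
	P-\pr{P}=\left(\begin{matrix} C^{\ast}C & -A^{\ast}B \\ -B^{\ast}A & -B^{\ast}B \end{matrix}\right),
\end{equation*}
but the ``only if'' direction does not follow from the blocks: Hilbert--Schmidt-ness of $P-\pr{P}$ gives that $C^{\ast}C$ and $B^{\ast}B$ are Hilbert--Schmidt, whereas an operator $X$ is Hilbert--Schmidt if and only if $X^{\ast}X$ is \emph{trace class}; $X^{\ast}X$ being merely Hilbert--Schmidt is strictly weaker (singular values $s_{i}=i^{-1/3}$ give $\sum_{i} s_{i}^{4}<\infty$ while $\sum_{i} s_{i}^{2}=\infty$). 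The conclusion you want is nevertheless true, and the fix is one line: since $V^{\ast}V=VV^{\ast}=1$,
\begin{equation*}
	P-\pr{P}=P-V^{\ast}PV=V^{\ast}(VP-PV)=V^{\ast}[V,P],\qquad [V,P]=\left(\begin{matrix} 0 & -B \\ C & 0 \end{matrix}\right),
\end{equation*}
and since multiplication by a unitary preserves the Hilbert--Schmidt class (note also $[V,P]=V(P-\pr{P})$), one gets $P-\pr{P}$ Hilbert--Schmidt $\Leftrightarrow$ $[V,P]$ Hilbert--Schmidt $\Leftrightarrow$ both $B$ and $C$ Hilbert--Schmidt; this identity is also the cleanest way to see that the theorem's hypothesis coincides with the usual Shale--Stinespring condition on the off-diagonal blocks of $V$. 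A smaller inaccuracy in the sufficiency sketch: $A^{\ast}A=1-C^{\ast}C$ with $C$ Hilbert--Schmidt makes $A$ Fredholm, not invertible, so $T=CA^{-1}$ is not literally defined when $\ker A\neq\{0\}$; the standard treatment (in the cited Powers--St{\o}rmer and Shale--Stinespring analyses) handles this finite-dimensional kernel by the extra one-particle factor you allude to but do not supply.
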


\subsubsection{Proof of Proposition~\ref{prop:finitary_PfPP}}
Let $P\in \mrm{Gr}(\mcal{K},\Gamma)$ be such that $P-P_{0}$ is of Hilbert--Schmidt class as assumed in Proposition~\ref{prop:finitary_PfPP}
and take $V\in \mcal{I}(\mcal{K},\Gamma)$ so that $P=V^{\ast}P_{0}V$.
Then, it follows from Theorem \ref{thm:implementability} that there exists a unitary operator $\mcal{U}$ on $\mcal{F}\big(\ell^{2}(\mfrak{X})\big)$ such that
\begin{equation*}
	\varphi_{P}(A)=(\bm{1},\pi_{P_{0}}(\alpha_{V}(A))\bm{1})_{\mcal{F}(\ell^{2}(\mfrak{X}))} =(\mcal{U}\bm{1},\pi_{P_{0}}(A)\mcal{U}\bm{1})_{\mcal{F}(\ell^{2}(\mfrak{X}))},\qquad A\in \mcal{C}(\mcal{K},\Gamma).
\end{equation*}
We may expand $\mcal{U}\bm{1}\in\mcal{F}\big(\ell^{2}(\mfrak{X})\big)$ in the complete orthonormal system $\{e_{\omega}\}_{\omega\in\Omega^{\circ}}$ as
\begin{equation*}
	\mcal{U}\bm{1}=\sum_{\omega\in\Omega^{\circ}} c^{P}(\omega)e_{\omega},\qquad c^{P}(\omega)\in\mbb{C},\qquad \omega\in\Omega^{\circ}.
\end{equation*}
It is obvious that $\widetilde{M}^{P}:=\big|c^{P}(\cdot)\big|^{2}$ defines a probability measure supported on $\Omega^{\circ}$ such that
\begin{equation*}
	\int_{\Omega}F(\omega)\widetilde{M}^{P}({\rm d}\omega)=\varphi_{P}(F)=\int_{\Omega}F(\omega)M^{P}({\rm d}\omega),\qquad F\in C(\Omega).
\end{equation*}
Therefore, we can conclude that $\widetilde{M}^{P}=M^{P}$ and, in particular, $M^{P}$ is supported on $\Omega^{\circ}$.

\subsubsection{Straightforward generalization of Proposition \ref{prop:finitary_PfPP}}
The above proof suggests a straightforward generalization of Proposition \ref{prop:finitary_PfPP}.
Let us take a~subset $X\subset\mfrak{X}$ and write $\Omega_{X}^{\mrm{fin}}$ for the subset of $\Omega$ consisting of $\omega$ such that $(\mfrak{X}\backslash X)\cap \omega$ and $X\backslash\omega$ are both finite. Note that, if $X$ is a finite set, then $\Omega_{X}^{\mrm{fin}}=\Omega^{\circ}$. Let $P_{X}^{\mrm{fin}}$ be the orthogonal projection onto $\mcal{K}^{+}_{\mfrak{X}\backslash X}\oplus \mcal{K}^{-}_{X}$.
Then we have the following:
\begin{Proposition}
\label{prop:finitary_PfPP_generalization}
If $P\in \mrm{Gr}(\mcal{K},\Gamma)$ is such that $P-P_{X}^{\mrm{fin}}$ is of Hilbert--Schmidt class, then the associated PfPP $M^{P}$ is supported on $\Omega_{X}^{\mrm{fin}}$.
\end{Proposition}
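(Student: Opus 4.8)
The plan is to mirror the proof of Proposition~\ref{prop:finitary_PfPP}, using $P_X^{\mrm{fin}}$ in place of the reference projection $P_0$. First I would record that $P_X^{\mrm{fin}}\in\mrm{Gr}(\mcal{K},\Gamma)$: it is the orthogonal projection onto $\mcal{K}^+_{\mfrak{X}\backslash X}\oplus\mcal{K}^-_X$, and because $\Gamma$ interchanges the two $\ell^2(\mfrak{X})$-summands of $\mcal{K}$ it maps this subspace onto its orthogonal complement $\mcal{K}^+_X\oplus\mcal{K}^-_{\mfrak{X}\backslash X}$, so that $\overline{P_X^{\mrm{fin}}}=1-P_X^{\mrm{fin}}$ and $P_X^{\mrm{fin}}$ is a genuine element of the Grassmannian. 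By transitivity of the $\mcal{I}(\mcal{K},\Gamma)$-action I may pick $V\in\mcal{I}(\mcal{K},\Gamma)$ with $P=V^{\ast}P_X^{\mrm{fin}}V$; since $P-P_X^{\mrm{fin}}$ is Hilbert--Schmidt, Theorem~\ref{thm:implementability} yields a unitary $\mcal{U}$ on $\mcal{F}(P_X^{\mrm{fin}}\mcal{K})$ with $\varphi_P(A)=(\mcal{U}\bm{1},\pi_{P_X^{\mrm{fin}}}(A)\mcal{U}\bm{1})_{\mcal{F}(P_X^{\mrm{fin}}\mcal{K})}$ for all $A\in\mcal{C}(\mcal{K},\Gamma)$.

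The new ingredient is a configuration-theoretic orthonormal basis of $\mcal{F}(P_X^{\mrm{fin}}\mcal{K})$. The one-particle space $P_X^{\mrm{fin}}\mcal{K}$ carries the orthonormal system consisting of the ``particle'' modes $(e_y,0)$, $y\in\mfrak{X}\backslash X$, together with the ``hole'' modes $(0,e_y)$, $y\in X$. A finite wedge of these modes is specified by a pair $(A,B)$ with $A\subset\mfrak{X}\backslash X$ and $B\subset X$ finite, to which I attach the configuration $\omega=(X\backslash B)\cup A$; since $A=\omega\cap(\mfrak{X}\backslash X)$ and $B=X\backslash\omega$, this is a bijection onto $\Omega_X^{\mrm{fin}}$ and produces an orthonormal basis $\{e^X_\omega\}_{\omega\in\Omega_X^{\mrm{fin}}}$. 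The crucial check is that the occupation-number operators act diagonally with the correct eigenvalues. Evaluating $\pi_{P_X^{\mrm{fin}}}(B(f))=a^{\ast}_{P_X^{\mrm{fin}}\mcal{K}}(P_X^{\mrm{fin}}f)+a_{P_X^{\mrm{fin}}\mcal{K}}(P_X^{\mrm{fin}}\Gamma f)$ on the generators $a_y=B((0,e_y))$ and $a_y^{\ast}=B((e_y,0))$, one finds that for $y\notin X$ the image $\pi_{P_X^{\mrm{fin}}}(a_y^{\ast}a_y)$ is the ordinary number operator of the mode $(e_y,0)$, while for $y\in X$ creation and annihilation are interchanged, so that $\pi_{P_X^{\mrm{fin}}}(a_y^{\ast}a_y)=1-a^{\ast}_{P_X^{\mrm{fin}}\mcal{K}}((0,e_y))a_{P_X^{\mrm{fin}}\mcal{K}}((0,e_y))$ is one minus the number operator of $(0,e_y)$. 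In both cases the eigenvalue on $e^X_\omega$ equals $\chi_{[y\in\omega]}$, whence $\pi_{P_X^{\mrm{fin}}}(F)e^X_\omega=F(\omega)e^X_\omega$ for every $F\in C(\Omega)$.

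The conclusion then follows verbatim as in Proposition~\ref{prop:finitary_PfPP}. Expanding $\mcal{U}\bm{1}=\sum_{\omega\in\Omega_X^{\mrm{fin}}}c^P(\omega)e^X_\omega$, the assignment $\widetilde{M}^P:=|c^P(\cdot)|^2$ defines a probability measure supported on $\Omega_X^{\mrm{fin}}$, and the diagonal action of $C(\Omega)$ gives $\int_\Omega F(\omega)\,\widetilde{M}^P(\mrm{d}\omega)=\varphi_P(F)=\int_\Omega F(\omega)\,M^P(\mrm{d}\omega)$ for all $F\in C(\Omega)$. As continuous functions determine a probability measure on the compact space $\Omega$, this forces $\widetilde{M}^P=M^P$, so $M^P$ is supported on $\Omega_X^{\mrm{fin}}$.

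I expect the only genuine obstacle to be the second paragraph: one must verify that for the hole sites $y\in X$, where $\pi_{P_X^{\mrm{fin}}}(a_y^{\ast}a_y)$ is realized as $1$ minus a number operator, the offset conspires with the reference configuration $X$ so that the eigenvalue on $e^X_\omega$ is still exactly $\chi_{[y\in\omega]}$ rather than its complement. Keeping the labelling $\omega\leftrightarrow(A,B)$ straight makes this a short computation, but it is precisely the step where the passage from $P_0$ to $P_X^{\mrm{fin}}$ is more than cosmetic.
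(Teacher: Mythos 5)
Your proposal is correct and is exactly the argument the paper has in mind: the paper gives no separate proof of this proposition, stating only that it is a ``straightforward generalization'' of the proof of Proposition~\ref{prop:finitary_PfPP}, and you have carried out that generalization faithfully (transitivity of $\mcal{I}(\mcal{K},\Gamma)$, Theorem~\ref{thm:implementability}, expansion of $\mcal{U}\bm{1}$, and identification of the two measures via $C(\Omega)$). Your second paragraph --- checking that for hole sites $y\in X$ the operator $\pi_{P_{X}^{\mrm{fin}}}(a_{y}^{\ast}a_{y})=1-a^{\ast}_{P_{X}^{\mrm{fin}}\mcal{K}}((0,e_{y}))a_{P_{X}^{\mrm{fin}}\mcal{K}}((0,e_{y}))$ still has eigenvalue $\chi_{[y\in\omega]}$ on $e^{X}_{\omega}$ under the labelling $\omega=(X\backslash B)\cup A$ --- is precisely the one detail the paper leaves implicit, and you have verified it correctly.
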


\subsection{Example}
In view of Proposition \ref{prop:existence_ppp}, one may associate PfPPs to a {\it bilinear Hamiltonian system} of quantum physics.
Let us illustrate this correspondence with a simple example.
We assume that $\mfrak{X}=\mbb{Z}+\frac{1}{2}$, and take an even function $\Upsilon\colon \mfrak{X}\to\mbb{R}$ and an odd function $\Delta\colon \mfrak{X}\to \mbb{C}$.
We decompose the Hilbert space $\mcal{K}=\ell^{2}(\mfrak{X})\oplus\ell^{2}(\mfrak{X})$ into $\mcal{K}=\bigoplus_{x\in\mfrak{X}}\mcal{K}_{x}$ with
$\mcal{K}_{x}=\mbb{C}(e_{x},0)\oplus\mbb{C}(0,e_{-x})$, $x\in\mfrak{X}$, where, as usual, $\bigoplus$ is understood as the completed direct sum of Hilbert spaces.
Accordingly, we define an operator $H=\sum_{x\in\mfrak{X}}H_{x}$ by
\begin{equation*}
	H_{x}=\left(
	\begin{matrix}
	\Upsilon (x) & \Delta (x) \\
	\overline{\Delta}(x) & -\Upsilon (x)
	\end{matrix}
	\right)
\end{equation*}
with respect to the basis $ ((e_{x},0), (0,e_{-x}) )$, $x\in\mfrak{X}$.
It is readily to see that $H$ is self-adjoint and satisfies $\overline{H}=-H$.
Each $H_{x}$, $x\in\mfrak{X}$ has eigenvalues $\pm\lambda_{x}$ with $\lambda_{x}=\sqrt{\Upsilon (x)^{2}+|\Delta (x)|^{2}}$ and, as normalized eigenvectors corresponding to the eigenvalues $\pm\lambda_{x}$, we can take
\begin{equation*}
	\gamma^{\pm}_{x}=u^{\pm}_{x}(e_{x},0)+\overline{u^{\mp}_{-x}}(0,e_{-x}),
\end{equation*}
where
\begin{gather}\label{eq:example_wavefunction}
	u^{\pm}_{x} =\sqrt{\frac{1}{2}\left(1\pm \frac{\Upsilon(x)}{\sqrt{\Upsilon (x)^{2}+|\Delta (x)|^{2}}}\right)}{\rm e}^{{\rm i}\phi_{x}}, \qquad
	\phi_{x} =
	\begin{cases}
	0, &x>0, \\
	\arg \Delta (x), &x<0.
	\end{cases}
\end{gather}
Note that, though $\arg \Delta (x)$ is only defined modulo $2\pi$, ${\rm e}^{{\rm i}\phi_{x}}$ is still well-defined.
Under this setting, we consider the following series of examples.

\begin{Proposition}
For $\beta\in \mbb{R}$, we define $S_{\beta}:=\big(1+{\rm e}^{-\beta H}\big)^{-1}$ by means of the spectral decomposition of $H$. Then, we have $S_{\beta}\in \mcal{Q}(\mcal{K},\Gamma)$. The corresponding correlation kernel $\mbb{K}_{S_{\beta}}$ is given by
\begin{gather*}
	\mbb{K}_{S_{\beta}}(x,y)=\left(
	\begin{matrix}
		\frac{1-{\rm e}^{-\beta\lambda_{x}}}{1+{\rm e}^{-\beta\lambda_{x}}}\overline{u^{+}_{-x}u^{-}_{x}}\delta_{x+y,0} & \frac{|u^{-}_{x}|^{2}+{\rm e}^{-\beta\lambda_{x}}(1-|u^{-}_{x}|^{2})}{1+{\rm e}^{-\beta\lambda_{x}}}\delta_{x,y} \vspace{1mm}\\
		\frac{|u^{+}_{x}|^{2}-1+{\rm e}^{-\beta\lambda_{x}}(1-|u^{+}_{x}|^{2})}{1+{\rm e}^{-\beta\lambda_{x}}}\delta_{x,y} & \frac{1-{\rm e}^{-\beta\lambda_{x}}}{1+{\rm e}^{-\beta\lambda_{x}}}u^{+}_{x}u^{-}_{-x}\delta_{x+y,0}
	\end{matrix}
	\right), \qquad x,y\in \mfrak{X}.
\end{gather*}
Furthermore, the weak limits $S_{\infty}:=\lim\limits_{\beta\to\infty}S_{\beta}$ and $S_{-\infty}:=\lim\limits_{\beta\to-\infty}S_{\beta}$ exist and $S_{\pm\infty}\in\mrm{Gr}(\mcal{K},\Gamma)$.
The projection operators $S_{\infty}$ and $S_{-\infty}$ are the orthogonal projection onto the closed subspaces generated by $\big\{\gamma^{+}_{x}\colon x\in\mfrak{X}\}$ and $\big\{\gamma^{-}_{x}\colon x\in\mfrak{X}\}$, respectively.
\end{Proposition}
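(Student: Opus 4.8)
The plan is to realise $S_{\beta}$ through the bounded Borel functional calculus of the self-adjoint operator $H$ and then to exploit the block decomposition $H=\bigoplus_{x\in\mfrak{X}}H_{x}$. Write $f_{\beta}(t)=\big(1+{\rm e}^{-\beta t}\big)^{-1}$, so that $S_{\beta}=f_{\beta}(H)$. Since $f_{\beta}$ is real-valued, $S_{\beta}$ is self-adjoint, and since $0<f_{\beta}(t)<1$ for every $t\in\mbb{R}$ one has $0\le S_{\beta}\le 1$. For the defining relation of $\mcal{Q}(\mcal{K},\Gamma)$ I would use that $\Gamma$ is anti-unitary with $\overline{H}=\Gamma H\Gamma=-H$: a resolvent computation gives $\Gamma(z-H)^{-1}\Gamma=(\bar z+H)^{-1}$, whence $\Gamma g(H)\Gamma=\tilde g(H)$ with $\tilde g(t)=\overline{g(-t)}$ for any bounded Borel $g$. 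Applied to $g=f_{\beta}$, together with the elementary identity $f_{\beta}(-t)=1-f_{\beta}(t)$, this yields $\overline{S_{\beta}}=f_{\beta}(-H)=1-f_{\beta}(H)=1-S_{\beta}$, so $S_{\beta}\in\mcal{Q}(\mcal{K},\Gamma)$.

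For the correlation kernel I would compute block by block. Because $S_{\beta}=\bigoplus_{x}f_{\beta}(H_{x})$ and each $H_{x}$ is the Hermitian matrix with eigenpairs $(\pm\lambda_{x},\gamma^{\pm}_{x})$, each $2\times2$ block $f_{\beta}(H_{x})$ is obtained from the spectral decomposition $f_{\beta}(H_{x})=f_{\beta}(\lambda_{x})\gamma^{+}_{x}(\gamma^{+}_{x})^{\ast}+f_{\beta}(-\lambda_{x})\gamma^{-}_{x}(\gamma^{-}_{x})^{\ast}$, or equivalently from the two-level formula $f_{\beta}(H_{x})=\tfrac12\big(f_{\beta}(\lambda_{x})+f_{\beta}(-\lambda_{x})\big)+\tfrac12\big(f_{\beta}(\lambda_{x})-f_{\beta}(-\lambda_{x})\big)H_{x}/\lambda_{x}$. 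The remaining step is to feed these entries into the kernel formula~(\ref{eq:matrix_kernel_S}). Here the key observation is the matching of the two orthogonal decompositions: since $(e_{y},0)\in\mcal{K}_{y}$ and $(0,e_{y})\in\mcal{K}_{-y}$, the diagonal blocks $S_{11},S_{22}$ contribute terms proportional to $\delta_{x,y}$ while the off-diagonal blocks $S_{12},S_{21}$ contribute terms proportional to $\delta_{x+y,0}$, exactly reproducing the displayed shape of $\mbb{K}_{S_{\beta}}$. The anti-symmetry recorded in Remark~\ref{rem:check_anti-sym_matrix_func} provides a convenient consistency check on signs, since the $2\times 2$ matrix $\mbb{K}_{S_{\beta}}(x,x)$ must be anti-symmetric.

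For the weak limits I would pass $\beta\to\pm\infty$ in the functional calculus. Pointwise on $\mbb{R}\backslash\{0\}$ one has $f_{\beta}\to\mbb{1}_{(0,\infty)}$ as $\beta\to+\infty$ and $f_{\beta}\to\mbb{1}_{(-\infty,0)}$ as $\beta\to-\infty$, with the uniform bound $0\le f_{\beta}\le 1$. Provided $0\notin\operatorname{spec}(H)$, equivalently $\lambda_{x}>0$ for all $x$, the dominated convergence principle for the functional calculus gives strong, hence weak, convergence $S_{\infty}=E_{(0,\infty)}(H)$ and $S_{-\infty}=E_{(-\infty,0)}(H)$, the spectral projections onto the positive and negative parts of $H$. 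These are projections, and they lie in $\mcal{Q}(\mcal{K},\Gamma)$ because $\overline{E_{(0,\infty)}(H)}=E_{(0,\infty)}(-H)=E_{(-\infty,0)}(H)=1-E_{(0,\infty)}(H)$; hence $S_{\pm\infty}\in\mrm{Gr}(\mcal{K},\Gamma)$. Finally, within each two-dimensional $\mcal{K}_{x}$ the positive and negative eigenspaces of $H_{x}$ are $\mbb{C}\gamma^{+}_{x}$ and $\mbb{C}\gamma^{-}_{x}$, so $E_{(0,\infty)}(H)$ and $E_{(-\infty,0)}(H)$ are precisely the orthogonal projections onto $\overline{\mrm{Span}}\{\gamma^{+}_{x}\colon x\in\mfrak{X}\}$ and $\overline{\mrm{Span}}\{\gamma^{-}_{x}\colon x\in\mfrak{X}\}$, as claimed.

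The most delicate part is not any single estimate but the bookkeeping in the second step: correctly transporting the blocks $f_{\beta}(H_{x})$ through the identification of the two-copy decomposition of $\mcal{K}$, keeping track of which spectral parameter (here $\lambda_{-x}=\lambda_{x}$), which copy, and which phase convention for $u^{\pm}_{x}$ enters each entry. Sign errors are easy to make, and the anti-symmetry of Remark~\ref{rem:check_anti-sym_matrix_func} is the cleanest guard against them. For the limiting statement the only genuine subtlety is the exclusion of zero modes: if $\lambda_{x}=0$ for some $x$, then $f_{\beta}$ is identically $\tfrac12$ on that mode and no projection survives in the limit, so the conclusion $S_{\pm\infty}\in\mrm{Gr}(\mcal{K},\Gamma)$ rests on $\lambda_{x}>0$ throughout.
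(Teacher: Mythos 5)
Your proof is correct, and it rests on the same two pillars as the paper's: the block decomposition $\mcal{K}=\bigoplus_{x\in\mfrak{X}}\mcal{K}_{x}$ with the spectral decomposition of each $H_{x}$, and dominated convergence for the $\beta\to\pm\infty$ limits. Where you genuinely differ is in how the relation $\overline{S_{\beta}}=1-S_{\beta}$ is obtained and in how the limits are identified. The paper works concretely: it writes $S_{\beta}=\sum_{x}\big(\big(1+{\rm e}^{-\beta\lambda_{x}}\big)^{-1}P_{x}+\big(1+{\rm e}^{\beta\lambda_{x}}\big)^{-1}(I_{x}-P_{x})\big)$ with $P_{x}$ the rank-one projection onto $\mbb{C}\gamma^{+}_{x}$ as in (\ref{eq:projection_example}), checks from the explicit formulas (\ref{eq:example_wavefunction}) that $\overline{P_{-x}}=I_{x}-P_{x}$, and sums using the evenness of $x\mapsto\lambda_{x}$; the limits are then read off as $S_{\infty}=\sum_{x}P_{x}$ and $S_{-\infty}=I-S_{\infty}$. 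You instead derive the operator identity $\Gamma g(H)\Gamma=\tilde g(H)$, $\tilde g(t)=\overline{g(-t)}$, from $\overline{H}=-H$ via resolvents, and apply it to $f_{\beta}$ together with $f_{\beta}(-t)=1-f_{\beta}(t)$; likewise you identify $S_{\pm\infty}$ as the spectral projections $E_{(0,\infty)}(H)$ and $E_{(-\infty,0)}(H)$ before matching them blockwise with the spans of $\gamma^{\pm}_{x}$. Your route is more abstract and buys generality: it works for any self-adjoint $H$ with $\overline{H}=-H$ without ever invoking the explicit eigenvectors, whereas the paper's computation stays entirely inside the $2\times 2$ blocks. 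Your remark on zero modes is also apt, and it is not a discrepancy with the paper: the formulas for $u^{\pm}_{x}$, hence for $\gamma^{\pm}_{x}$, already presuppose $\lambda_{x}>0$, so that hypothesis is implicit in the paper's setup and proof as well.

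One small imprecision to fix: you write ``provided $0\notin\operatorname{spec}(H)$, equivalently $\lambda_{x}>0$ for all $x$.'' These are not equivalent: if $\lambda_{x}>0$ for all $x$ but $\inf_{x}\lambda_{x}=0$, then $0$ lies in the (closed) spectrum of $H$ although it is not an eigenvalue. What your dominated-convergence argument actually requires is only that the spectral measure of $H$ has no atom at $0$, i.e., that $0$ is not an eigenvalue, and \emph{that} is equivalent to $\lambda_{x}>0$ for all $x$ since any zero mode of $H=\bigoplus_{x}H_{x}$ must be a zero mode of some block. With this wording corrected, the limiting argument is complete.
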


\begin{proof}For each $x\in \mfrak{X}$, we write $P_{x}$ for the orthogonal projection in $\mcal{K}_{x}$ onto the subspace spanned by $\gamma^{+}_{x}$.
Explicitly, it is the matrix
\begin{equation}\label{eq:projection_example}
	P_{x}=\left(
	\begin{matrix}
		|u^{+}_{x}|^{2} & u^{+}_{-x}u^{-}_{x} \\
		\overline{u^{+}_{-x}u^{-}_{x}} & |u^{-}_{x}|^{2}
	\end{matrix}
	\right)
\end{equation}
in the basis $((e_{x},0), (0,e_{-x}))$.
Then, the operator admits the spectral decomposition
\begin{equation}\label{eq:example_KMS_spectral_decomposition}
	S_{\beta}=\sum_{x\in\mfrak{X}}\big(\big(1+{\rm e}^{-\beta\lambda_{x}}\big)^{-1}P_{x}+\big(1+{\rm e}^{\beta\lambda_{x}}\big)^{-1}(I_{x}-P_{x})\big),
\end{equation}
where $I_{x}$ is the identity operator on $\mcal{K}_{x}$, $x\in\mfrak{X}$.
Noting that $\Gamma\colon \mcal{K}_{x}\to\mcal{K}_{-x}$, we observe from~(\ref{eq:example_wavefunction}) and~(\ref{eq:projection_example}) that $\overline{P_{-x}}=I_{x}-P_{x}$.
Since the assignment $x\mapsto \lambda_{x}$ is an even function on $\mfrak{X}$, we can see that
\begin{align*}
	\overline{S_{\beta}}
	&=\sum_{x\in\mfrak{X}}\big(\big(1+{\rm e}^{-\beta\lambda_{x}}\big)^{-1}(I_{x}-P_{x})+\big(1+{\rm e}^{\beta\lambda_{x}}\big)^{-1}P_{x}\big) \\
&=\sum_{x\in\mfrak{X}}\big(I_{x}-\big(1+{\rm e}^{-\beta\lambda_{x}}\big)^{-1}P_{x}-\big(1+{\rm e}^{\beta\lambda_{x}}\big)^{-1}(I_{x}-P_{x})\big)
	 =I-S_{\beta}.
\end{align*}
Notice that $S_{\beta}$ is clearly self-adjoint and satisfies $0\leq S\leq I$. Hence, we conclude that $S_{\beta}\in \mcal{Q}(\mcal{K},\Gamma)$.
It is a straightforward manipulation to derive the expression of $\mbb{K}_{S_{\beta}}$ from~(\ref{eq:matrix_kernel_S}), (\ref{eq:projection_example}) and~(\ref{eq:example_KMS_spectral_decomposition}).

Existence of the weak limits $S_{\pm\infty}$ is ensured by the spectral decomposition~(\ref{eq:example_KMS_spectral_decomposition}) and the dominated convergence theorem.
It is also obvious from construction that $S_{\infty}=\sum_{x\in\mfrak{X}}P_{x}$ is the projection onto the closed subspace generated by $\big\{\gamma_{x}^{+}\colon x\in\mfrak{X}\}$, and that $S_{-\infty}=\sum_{x\in\mfrak{X}}(I_{x}-P_{x})=I-S_{\infty}$ is that onto the closed subspace generated by $\big\{\gamma_{x}^{-}\colon x\in\mfrak{X}\big\}$.
\end{proof}

\begin{Remark}
The property $\overline{H}=-H$ of the operator $H$ ensures that the family $\big\{{\rm e}^{{\rm i}tH}\colon t\in\mbb{R}\big\}$ is a one-parameter subgroup of $\mcal{I}(\mcal{K},\Gamma)$. Hence, it generates a one-parameter group of Bogoliubov automorphisms $\alpha_{{\rm e}^{{\rm i}tH}}$, $t\in\mbb{R}$ to give the CAR algebra $\mcal{C}(\mcal{K},\Gamma)$ a structure of a C$^{\ast}$-dynamical system.
Given a C$^{\ast}$-dynamical system, it is natural to study its ground states and Kubo--Martin--Schwinger (KMS) states.
For these notions of a C$^{\ast}$-dynamical system, ground states and KMS states, we refer readers to~\cite[Chapter~5.2]{BratteliRobinson1997}.
In fact, under the correspondence in Lemma~\ref{lem:quasi-free_covariance} between quasi-free states and operators in $\mcal{Q}(\mcal{K},\Gamma)$, each $S_{\beta}$, $\beta\in \mbb{R}$ corresponds to the $\beta$-KMS state and~$S_{\infty}$ is the ground state.
\end{Remark}

Let us apply the criteria in Propositions \ref{prop:finitary_PfPP} and \ref{prop:finitary_PfPP_generalization} to the projection operators $S_{\infty}$ and $S_{-\infty}$.
\begin{Proposition}\label{eq:example_PfPP_ground_state}
Assume that $|\Delta (x)|/\Upsilon (x)=o\big(|x|^{-1/2}\big)$ as $|x|\to\infty$
Then the PfPPs~$M^{S_{\infty}}$ and $M^{S_{-\infty}}$ are supported on $\Omega^{\circ}$ and $\Omega_{\mfrak{X}}^{\mrm{fin}}$, respectively.
In other words, if a point process~$X$ obeys~$M^{S_{\infty}}$, then we have $\# X<\infty$ almost surely, and if it obeys $M^{S_{-\infty}}$, we have $\# \mfrak{X}\backslash X<\infty$.
\end{Proposition}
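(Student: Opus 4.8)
The plan is to reduce both assertions to a single Hilbert--Schmidt estimate and then invoke Propositions~\ref{prop:finitary_PfPP} and~\ref{prop:finitary_PfPP_generalization}. For $M^{S_{\infty}}$, Proposition~\ref{prop:finitary_PfPP} says it suffices to show that $S_{\infty}-P_{0}$ is of Hilbert--Schmidt class. For $M^{S_{-\infty}}$, I would first note that $P_{\mfrak{X}}^{\mrm{fin}}$, the projection onto $\mcal{K}^{+}_{\varnothing}\oplus\mcal{K}^{-}_{\mfrak{X}}$, equals $I-P_{0}$; since $S_{-\infty}=I-S_{\infty}$, this gives $S_{-\infty}-P_{\mfrak{X}}^{\mrm{fin}}=(I-S_{\infty})-(I-P_{0})=-(S_{\infty}-P_{0})$. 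Thus one and the same operator governs both cases, and Proposition~\ref{prop:finitary_PfPP_generalization} with $X=\mfrak{X}$ delivers the support statement for $M^{S_{-\infty}}$ from the very same estimate.

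Next I would compute $S_{\infty}-P_{0}$ explicitly along the orthogonal decomposition $\mcal{K}=\bigoplus_{x\in\mfrak{X}}\mcal{K}_{x}$, which both operators respect: on each $\mcal{K}_{x}$ the operator $P_{0}$ is the projection onto $(e_{x},0)$, i.e., $\mrm{diag}(1,0)$ in the basis $((e_{x},0),(0,e_{-x}))$, while $P_{x}=S_{\infty}|_{\mcal{K}_{x}}$ is given by~\eqref{eq:projection_example}. Using the normalization $|u^{+}_{x}|^{2}+|u^{-}_{x}|^{2}=1$ read off from~\eqref{eq:example_wavefunction}, the block takes the form
\begin{equation*}
	(S_{\infty}-P_{0})|_{\mcal{K}_{x}}=\left(
	\begin{matrix}
	-|u^{-}_{x}|^{2} & u^{+}_{-x}u^{-}_{x} \\
	\overline{u^{+}_{-x}u^{-}_{x}} & |u^{-}_{x}|^{2}
	\end{matrix}
	\right),
\end{equation*}
so that the block-diagonal structure yields
\begin{equation*}
	\|S_{\infty}-P_{0}\|_{\mrm{HS}}^{2}=\sum_{x\in\mfrak{X}}\big(2|u^{-}_{x}|^{4}+2|u^{+}_{-x}|^{2}|u^{-}_{x}|^{2}\big).
\end{equation*}

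Since $|u^{+}_{-x}|\le 1$ and $|u^{-}_{x}|\le 1$, each summand is bounded by $4|u^{-}_{x}|^{2}$, and the Hilbert--Schmidt property reduces to the convergence of $\sum_{x}|u^{-}_{x}|^{2}$. To finish I would extract the decay of $|u^{-}_{x}|^{2}$ from the hypothesis: writing $\lambda_{x}=\sqrt{\Upsilon(x)^{2}+|\Delta(x)|^{2}}$ one has
\begin{equation*}
	|u^{-}_{x}|^{2}=\frac12\left(1-\frac{\Upsilon(x)}{\lambda_{x}}\right)=\frac{|\Delta(x)|^{2}}{2\lambda_{x}(\lambda_{x}+\Upsilon(x))}.
\end{equation*}
The assumption $|\Delta(x)|/\Upsilon(x)=o(|x|^{-1/2})$ forces $\Upsilon(x)>0$ and $\lambda_{x}\sim\Upsilon(x)$ for $|x|$ large, whence $|u^{-}_{x}|^{2}\sim\tfrac14|\Delta(x)/\Upsilon(x)|^{2}=o(|x|^{-1})$, so that the tail of $\sum_{x}|u^{-}_{x}|^{2}$ is controlled by the prescribed decay and the series converges; this gives the desired Hilbert--Schmidt bound.

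I expect the real content to sit in this last step, namely confirming that the stated decay rate secures $\sum_{x}|u^{-}_{x}|^{2}<\infty$ through the asymptotic $|u^{-}_{x}|^{2}\sim\tfrac14|\Delta(x)/\Upsilon(x)|^{2}$; this is where the precise exponent $-\tfrac12$ enters. By contrast, the blockwise diagonalization and the cancellation $S_{-\infty}-P_{\mfrak{X}}^{\mrm{fin}}=-(S_{\infty}-P_{0})$ are routine consequences of the explicit spectral data in~\eqref{eq:example_KMS_spectral_decomposition} and~\eqref{eq:projection_example}.
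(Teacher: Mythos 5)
Your proposal is correct and coincides with the paper's own proof in every essential step: the same reduction $S_{-\infty}-P_{\mfrak{X}}^{\mrm{fin}}=-(S_{\infty}-P_{0})$ collapsing both cases to one Hilbert--Schmidt estimate, the same blockwise computation along $\mcal{K}=\bigoplus_{x\in\mfrak{X}}\mcal{K}_{x}$ (your block and norm agree with the paper's via $|u^{+}_{x}|^{2}+|u^{-}_{x}|^{2}=1$), and the same appeal to Propositions~\ref{prop:finitary_PfPP} and~\ref{prop:finitary_PfPP_generalization}. The one caveat is shared with the paper itself: the final step infers convergence of $\sum_{x}|u^{-}_{x}|^{2}$ from $|u^{-}_{x}|^{2}=o(|x|^{-1})$, which is not literally valid (terms like $1/(|x|\log|x|)$ are $o(|x|^{-1})$ yet not summable), so both arguments implicitly read the hypothesis as ensuring $\sum_{x}\big(|\Delta(x)|/\Upsilon(x)\big)^{2}<\infty$ (and, likewise, both implicitly take $\Upsilon(x)>0$ for large $|x|$).
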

\begin{proof}As before, we write $P_{0}$ for the orthogonal projection to the first component of $\mcal{K}=\ell^{2}(\mfrak{X})\oplus\ell^{2}(\mfrak{X})$.
Then, $P_{\mfrak{X}}^{\mrm{fin}}=I-P_{0}$ is the projection to the second component.
Due to Propositions~\ref{prop:finitary_PfPP} and~\ref{prop:finitary_PfPP_generalization}, our task is to estimate the difference
\begin{equation*}
	S_{\infty}-P_{0}=(I-S_{-\infty})-\big(I-P_{\mfrak{X}}^{\mrm{fin}}\big)=-\big(S_{-\infty}-P_{\mfrak{X}}^{\mrm{fin}}\big)
\end{equation*}
of projection operators.
We may decompose $P_{0}$ as $P_{0}=\sum_{x\in\mfrak{X}}(P_{0})_{x}$ according to $\mcal{K}=\bigoplus_{x\in\mfrak{X}}\mcal{K}_{x}$.
Then, we can see that, for each $x\in\mfrak{X}$,
\begin{equation*}
	P_{x}-(P_{0})_{x}=\left(
	\begin{matrix}
		|u^{+}_{x}|^{2}-1 & u^{+}_{-x}u^{-}_{x} \\
		\overline{u^{+}_{-x}u^{-}_{x}} & |u^{-}_{x}|^{2}
	\end{matrix}
	\right)
\end{equation*}
and
\begin{equation*}
	\tr_{\mcal{K}_{x}}\big((P_{x}-(P_{0})_{x})^{2}\big)=\big(1-|u^{+}_{x}|^{2}\big)^{2}+2|u^{+}_{-x}u^{-}_{x}|^{2}+|u^{-}_{x}|^{4}.
\end{equation*}
Hence, we have
\begin{equation*}
\tr_{\mcal{K}}\big((S_{\infty}-P_{0})^{2}\big) =\sum_{x\in\mfrak{X}}\big(\big(1-|u^{+}_{x}|^{2}\big)^{2}+2|u^{+}_{-x}u^{-}_{x}|^{2}+|u^{-}_{x}|^{4}\big).
\end{equation*}
On the other hand, we observe from (\ref{eq:example_wavefunction}) that
\begin{gather*}
	|u^{+}_{x}|^{2} =1+O\big((|\Delta (x)|/\Upsilon (x))^{2}\big), \qquad
	|u^{-}_{x}|^{2} =O\big((|\Delta (x)|/\Upsilon (x))^{2}\big)
\end{gather*}
as $|x|\to\infty$.
Therefore, if $|\Delta (x)|/\Upsilon (x)=o\big(|x|^{-1/2}\big)$ as $|x|\to\infty$, $S_{\infty}-P_{0}=-\big(S_{-\infty}-P_{\mfrak{X}}^{\mrm{fin}}\big)$ is of Hilbert--Schmidt class, which implies, due to Propositions~\ref{prop:finitary_PfPP} and~\ref{prop:finitary_PfPP_generalization}, that the PfPPs $M^{S_{\infty}}$ and $M^{S_{-\infty}}$ are supported on $\Omega^{\circ}$ and $\Omega_{\mfrak{X}}^{\mrm{fin}}$, respectively.
\end{proof}

\section{From measures to states}\label{sect:measre_to_state}
\subsection{Quasi-invariant measures and representations}
This and next subsections are devoted to an exposition of a construction of a state over the gauge-invariant subalgebra of a CAR algebra from a probability measure that was proposed in~\cite{Olshanski2020}.
\subsubsection{Koopman-type construction}
Let $G$ be a countable group of automorphisms of $(\Omega,\Sigma)$. Then, $G$ naturally acts on the collection of measures on $(\Omega,\Sigma)$ by
\begin{equation*}
	{}^{g}M(A):=M\big(g^{-1}(A)\big),\qquad A\in \Sigma,\qquad g\in G.
\end{equation*}
Two measures $M_{1}$ and $M_{2}$ are said to be equivalent if they are absolutely continuous with respect to each other and, in this case, we write $M_{1}\simeq M_{2}$. By means of this notion, we say that a measure $M$ is $G$-quasi-invariant if $M\simeq {}^{g}M$ for arbitrary $g\in G$.

Since the group $G$ acts naturally on the commutative algebra $C(\Omega)$ of continuous functions on $\Omega$, we can consider a semi-direct product of C$^{\ast}$-algebras $C(\Omega)\rtimes G$. Take a $G$-quasi-invariant measure $M$ on $(\Omega,\Sigma)$.
We define a representation of $C(\Omega)\rtimes G$ on $L^{2}(\Omega,M)$ following the Koopman-type construction:
\begin{alignat*}{3}
	&\mcal{T}^{M}(F)h:=Fh, \qquad && F\in C(\Omega),\quad h\in L^{2}(\Omega, M),& \\
	&(\mcal{T}^{M}(g)h)(\omega):=h(g^{-1}(\omega))\phi (\omega,g )^{1/2}, \qquad && g\in G,\quad h\in L^{2}(\Omega,M),\quad \omega\in \Omega,&
\end{alignat*}
where $\phi$ is the 1-cocycle defined by
\begin{equation*}
	\phi (\omega,g):=\frac{{}^{g}M}{M}(\omega),\qquad \omega\in \Omega,\qquad g\in G.
\end{equation*}
Notice that the $G$-quasi-invariance ensures the existence of the Radon--Nikod{\'y}m derivative.

\subsubsection[Arising of the gauge-invariant subalgebra A(K,Gamma)]{Arising of the gauge-invariant subalgebra $\boldsymbol{\mcal{A}(\mcal{K},\Gamma)}$}

Hereafter, we assume that $\Omega=\{0,1\}^{\mfrak{X}}$ and $G=\mfrak{S}=\mfrak{S}(\mfrak{X})$ that consists of finite permutations of $\mfrak{X}$.
The wreath product $\mfrak{S}\wr\mbb{Z}_{2}$ is realized as the semi-direct product $\mfrak{S}\ltimes \mcal{E}$, where $\mcal{E}$ is the commutative algebra generated by $\varepsilon_{x}$, $x\in\mfrak{X}$ with the relations $\varepsilon_{x}^{2}=1$, $x\in\mfrak{X}$. The covariance structure reads
\begin{equation*}
	g\varepsilon_{x}g^{-1}=\varepsilon_{g(x)},\qquad g\in\mfrak{S},\qquad x\in\mfrak{X}.
\end{equation*}

For each $x\in \mfrak{X}$, we define $d_{x}\in C(\Omega)$ by $d_{x}(\omega)=1-2\omega(x)$, $\omega\in\Omega$.
\begin{Proposition}
Let us write $C^{\ast}[\mfrak{S}\wr\mbb{Z}_{2}]$ for the C$^{\ast}$-algebra completion of the group algebra $\mbb{C}[\mfrak{S}\wr\mbb{Z}_{2}]$. We have an isomorphism
\begin{equation*}
	C(\Omega)\rtimes \mfrak{S}\to C^{\ast}[\mfrak{S}\wr\mbb{Z}_{2}],
\end{equation*}
which is characterized by the assignment $d_{x}\mapsto \varepsilon_{x}$, $x\in\mfrak{X}$ and the natural identification of $\mfrak{S}$ in both sides.
\end{Proposition}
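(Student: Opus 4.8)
The plan is to factor the claimed isomorphism through the crossed product $C^{*}[\mcal{E}]\rtimes\mfrak{S}$, identifying $C(\Omega)$ with the group C$^{*}$-algebra of the abelian base group $\mcal{E}$ and then recognizing the crossed product as a group C$^{*}$-algebra of the semidirect product $\mcal{E}\rtimes\mfrak{S}=\mfrak{S}\wr\mbb{Z}_{2}$.

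First I would realize the commutative subalgebra $C(\Omega)$ as a group C$^{*}$-algebra. The base group $\mcal{E}$ is the restricted direct product $\bigoplus_{x\in\mfrak{X}}\mbb{Z}_{2}$, so its (full, equivalently reduced, since abelian) group C$^{*}$-algebra $C^{*}[\mcal{E}]$ is commutative and, by the Gelfand--Fourier transform, isomorphic to $C\big(\what{\mcal{E}}\big)$, where $\what{\mcal{E}}$ is the Pontryagin dual. Because the dual of a direct sum of copies of $\mbb{Z}_{2}$ is the full direct product $\prod_{x\in\mfrak{X}}\mbb{Z}_{2}$, there is a canonical homeomorphism $\what{\mcal{E}}\cong\{0,1\}^{\mfrak{X}}=\Omega$ under which the unitary generator $\varepsilon_{x}\in C^{*}[\mcal{E}]$ is sent to the evaluation function $\chi\mapsto\chi(\varepsilon_{x})\in\{\pm 1\}$. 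Identifying $\chi(\varepsilon_{x})=1-2\omega(x)$, this function is exactly $d_{x}$. I would then use Stone--Weierstrass to confirm that the $d_{x}$ separate points of $\Omega$ and hence generate $C(\Omega)$, so that $\varepsilon_{x}\mapsto d_{x}$ extends to a $*$-isomorphism $C^{*}[\mcal{E}]\xrightarrow{\sim}C(\Omega)$ compatible with the relations $\varepsilon_{x}^{2}=1$ and $d_{x}^{2}=1$.

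Next I would check that this isomorphism is $\mfrak{S}$-equivariant. Writing the natural action of $g\in\mfrak{S}$ on $\Omega$ as $(g\omega)(y)=\omega\big(g^{-1}(y)\big)$, a direct computation gives $(g\cdot d_{x})(\omega)=d_{x}\big(g^{-1}\omega\big)=1-2\omega(g(x))=d_{g(x)}(\omega)$, i.e., $g\cdot d_{x}=d_{g(x)}$, which matches precisely the covariance $g\varepsilon_{x}g^{-1}=\varepsilon_{g(x)}$ of the wreath product. Consequently the $\mfrak{S}$-action on $C(\Omega)$ transports to the automorphic $\mfrak{S}$-action on $C^{*}[\mcal{E}]$ induced by the action of $\mfrak{S}$ on $\mcal{E}$, so the crossed products agree: $C(\Omega)\rtimes\mfrak{S}\cong C^{*}[\mcal{E}]\rtimes\mfrak{S}$.

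Finally I would invoke the standard identification of the crossed product of a group C$^{*}$-algebra by a compatible group action with the group C$^{*}$-algebra of the semidirect product. For discrete groups this is immediate from universal properties: a full covariant representation of $\big(C^{*}[\mcal{E}],\mfrak{S}\big)$ is the same datum as a unitary representation of $\mcal{E}$ together with a unitary representation of $\mfrak{S}$ intertwining the actions, which is precisely a unitary representation of $\mcal{E}\rtimes\mfrak{S}$. Hence $C^{*}[\mcal{E}]\rtimes\mfrak{S}\cong C^{*}[\mcal{E}\rtimes\mfrak{S}]=C^{*}[\mfrak{S}\wr\mbb{Z}_{2}]$, the last equality being the definition $\mfrak{S}\wr\mbb{Z}_{2}=\mfrak{S}\ltimes\mcal{E}$. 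Composing the three identifications yields the desired isomorphism, which is the identity on $\mfrak{S}$ and sends $d_{x}$ to $\varepsilon_{x}$. The only genuinely delicate point is matching the \emph{full} C$^{*}$-completions in this last step; but since $\mcal{E}$ is abelian and all groups involved are discrete, no reduced-versus-full or amenability subtleties intervene, and the universal-property argument closes the proof without further estimates.
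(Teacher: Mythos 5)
Your proposal is correct, but there is nothing in the paper to compare it against: this Proposition appears in an expository subsection reproducing Olshanski's construction from \cite{Olshanski2020} and is stated without any proof. Your argument fills that gap along the standard (and surely intended) route: the Gelfand--Pontryagin identification $C^{*}[\mcal{E}]\cong C\big(\what{\mcal{E}}\big)\cong C(\Omega)$, under which the unitary generator $\varepsilon_{x}$ goes to the $\pm 1$-valued coordinate function $d_{x}(\omega)=1-2\omega(x)$; the verification that this identification intertwines the two $\mfrak{S}$-actions (your computation $g\cdot d_{x}=d_{g(x)}$ matching the covariance $g\varepsilon_{x}g^{-1}=\varepsilon_{g(x)}$, which suffices since the $d_{x}$ generate $C(\Omega)$ by Stone--Weierstrass); and the universal-property isomorphism $C^{*}[\mcal{E}]\rtimes\mfrak{S}\cong C^{*}[\mcal{E}\rtimes\mfrak{S}]=C^{*}[\mfrak{S}\wr\mbb{Z}_{2}]$ for discrete groups.

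One step deserves a more careful justification than you give it. Your closing claim that no full-versus-reduced subtleties intervene ``since $\mcal{E}$ is abelian and all groups involved are discrete'' is not a valid reason: discreteness alone never settles this question (the free group $F_{2}$ is discrete, yet $C^{*}(F_{2})\neq C^{*}_{r}(F_{2})$). What actually closes the point here is one of the following. Either one reads both $C(\Omega)\rtimes\mfrak{S}$ and $C^{*}[\mfrak{S}\wr\mbb{Z}_{2}]$ as \emph{full} (universal) completions, in which case your universal-property argument needs no further input at all; or one invokes amenability: $\mfrak{S}$ is locally finite, hence amenable, hence the extension $\mfrak{S}\wr\mbb{Z}_{2}=\mcal{E}\rtimes\mfrak{S}$ of the amenable group $\mcal{E}$ by $\mfrak{S}$ is amenable, so every full and reduced completion in sight coincides and the paper's unspecified choice of completion is immaterial. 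With this repair the proof is complete and, in my judgement, the one the paper implicitly relies on.
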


Let $M$ be an $\mfrak{S}$-quasi-invariant measure and $\mcal{T}^{M}$ be the associated representation of $C(\Omega)\rtimes\mfrak{S}$ on $L^{2}(\Omega,M)$. Then, due to the above isomorphism, it is regarded as a representation of $C^{\ast}[\mfrak{S}\wr\mbb{Z}_{2}]$.

We introduce a two-sided ideal
\begin{equation*}
	I=\langle (1-s_{x,y})(1-\varepsilon_{x})(1-\varepsilon_{y}), (1-s_{x,y})(1+\varepsilon_{x})(1+\varepsilon_{y})\colon x,y\in \mfrak{X}\rangle
\end{equation*}
of $C^{\ast}[\mfrak{S}\wr\mbb{Z}_{2}]$, where we write $s_{x,y}$ for the transposition of~$x$ and~$y$.
Since it is generated by self-adjoint elements, the quotient $C^{\ast}[\mfrak{S}\wr\mbb{Z}_{2}]/I$ is a C$^{\ast}$-algebra.

\begin{Proposition}
For an $\mfrak{S}$-quasi-invariant measure $M$, the representation $\mcal{T}^{M}$ factors through the quotient $C^{\ast}[\mfrak{S}\wr\mbb{Z}_{2}]/I$.
\end{Proposition}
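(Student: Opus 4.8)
The plan is to show that $\mcal{T}^{M}$ annihilates each of the listed generators of the ideal $I$. Since $\mcal{T}^{M}$ is a $\ast$-representation of the C$^{\ast}$-algebra $C^{\ast}[\mfrak{S}\wr\mbb{Z}_{2}]\cong C(\Omega)\rtimes\mfrak{S}$, it is in particular contractive, and $I$ is the closed two-sided ideal generated by those (self-adjoint) elements; hence the vanishing of $\mcal{T}^{M}$ on the generators propagates through products and the closure to all of $I$, and $\mcal{T}^{M}$ then descends to the quotient $C^{\ast}[\mfrak{S}\wr\mbb{Z}_{2}]/I$.

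First I would transport the generators to $C(\Omega)\rtimes\mfrak{S}$ via the isomorphism $\varepsilon_{x}\mapsto d_{x}$. Under $\mcal{T}^{M}$ the element $\varepsilon_{x}$ acts on $L^{2}(\Omega,M)$ as multiplication by $d_{x}(\omega)=1-2\omega(x)$, so that $1-\varepsilon_{x}$ acts as multiplication by $2\omega(x)$ and $1+\varepsilon_{x}$ as multiplication by $2(1-\omega(x))$. Writing
\begin{equation*}
	C=\{\omega\in\Omega\,|\,\omega(x)=\omega(y)=1\},\qquad C'=\{\omega\in\Omega\,|\,\omega(x)=\omega(y)=0\},
\end{equation*}
the operator $\mcal{T}^{M}((1-\varepsilon_{x})(1-\varepsilon_{y}))$ is $4$ times multiplication by $\chi_{C}$, and $\mcal{T}^{M}((1+\varepsilon_{x})(1+\varepsilon_{y}))$ is $4$ times multiplication by $\chi_{C'}$. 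Thus in both cases it remains to show that $\mcal{T}^{M}(1-s_{x,y})$ kills functions supported on $C$ (resp.\ $C'$).

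The key observation is that the transposition $s_{x,y}$, acting on $\Omega$ by interchanging the values of a configuration at $x$ and $y$, restricts to the \emph{identity} map on $C$ and on $C'$, since swapping two equal entries changes nothing. Consequently, for any measurable $A\subset C$ one has $s_{x,y}^{-1}(A)=A$, whence ${}^{s_{x,y}}M(A)=M(A)$; it follows that the Radon--Nikod\'ym cocycle satisfies $\phi(\cdot,s_{x,y})=1$ for $M$-a.e.\ point of $C$, and likewise on $C'$. Now if $f\in L^{2}(\Omega,M)$ is supported on $C$, then, because $s_{x,y}^{-1}(\omega)\in C$ exactly when $\omega\in C$ (on which set $s_{x,y}$ is the identity and $\phi(\cdot,s_{x,y})=1$),
\begin{equation*}
	\big(\mcal{T}^{M}(s_{x,y})f\big)(\omega)=f\big(s_{x,y}^{-1}(\omega)\big)\phi(\omega,s_{x,y})^{1/2}=f(\omega)
\end{equation*}
for $M$-a.e.\ $\omega$. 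Hence $\mcal{T}^{M}(1-s_{x,y})f=0$, and combining with the previous paragraph gives $\mcal{T}^{M}\big((1-s_{x,y})(1-\varepsilon_{x})(1-\varepsilon_{y})\big)=0$; the identical argument on $C'$ disposes of $(1-s_{x,y})(1+\varepsilon_{x})(1+\varepsilon_{y})$.

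I expect the only delicate step to be the cocycle computation: one must justify carefully that the local triviality of the point transformation $s_{x,y}$ on $C$ and $C'$ forces the globally defined density $\phi(\cdot,s_{x,y})$ to equal $1$ almost everywhere on these sets, keeping track of the preimages under $s_{x,y}$ and of the almost-everywhere nature of the Radon--Nikod\'ym derivative. Once this is secured, the remaining manipulations are purely formal.
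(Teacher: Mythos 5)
Your proof is correct. The paper itself gives no proof of this proposition --- it is stated in the expository section recounting Olshanski's construction and deferred to~\cite{Olshanski2020} --- and your argument is precisely the standard one that fills this gap: $\mcal{T}^{M}\big((1-\varepsilon_{x})(1-\varepsilon_{y})\big)$ is multiplication by $4\chi_{C}$ (resp.\ $4\chi_{C'}$ for the other generator), the transposition $s_{x,y}$ acts as the identity on $C$ and $C'$ so that ${}^{s_{x,y}}M$ agrees with $M$ on all measurable subsets of these sets and hence the Radon--Nikod\'ym cocycle $\phi(\cdot,s_{x,y})$ equals $1$ almost everywhere there, and finally the kernel of the $\ast$-representation $\mcal{T}^{M}$ is a closed two-sided ideal, so killing the generators kills all of $I$; the only point you pass over lightly (that $f\circ s_{x,y}^{-1}$ vanishes a.e.\ off $C$ when $f$ does, which uses quasi-invariance to see that $s_{x,y}$ maps null sets to null sets) is harmless.
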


To define a morphism $C^{\ast}[\mfrak{S}\wr\mbb{Z}_{2}]\to\mcal{A}(\mcal{K},\Gamma)$, we further assume that $\mfrak{X}$ is equipped with a linear order $\le$ so that, as an ordered set, $(\mfrak{X},\le)$ is isomorphic to $\mbb{Z}$ or $\mbb{N}$. In particular, we assume that each interval is a finite set.
We write
\begin{equation*}
	\eta_{x}:=1-2a^{\ast}_{x}a_{x},\qquad x\in\mfrak{X}
\end{equation*}
and
\begin{equation*}
	\eta_{(x,y)}:=\prod_{x<z<y}\eta_{z},\qquad x,y\in\mfrak{X}.
\end{equation*}
Notice that $\eta_{x}$, $x\in\mfrak{X}$ are commutative and the ordering of the product does not matter.
\begin{Proposition}
We defina a morphism $p\colon C^{\ast}[\mfrak{S}\wr\mbb{Z}_{2}]\to\mcal{A}(\mcal{K},\Gamma)$ by
\begin{gather*}
	p(\varepsilon_{x}):=\eta_{x},\qquad x\in\mfrak{X}, \\
p(s_{x,y}) :=\frac{1+\eta_{x}\eta_{y}}{2}+\frac{1+\eta_{x}\eta_{y}+(1-\eta_{x}\eta_{y})\eta_{(x,y)}}{2}(a^{\ast}_{x}a_{y}+a^{\ast}_{y}a_{x}),\qquad x<y.
\end{gather*}
Then, $p$ is a surjection and $\ker p=I$. Hence, in particular, $C^{\ast}[\mfrak{S}\wr\mbb{Z}_{2}]/I\simeq \mcal{A}(\mcal{K},\Gamma)$.
\end{Proposition}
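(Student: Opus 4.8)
The plan is to prove the three claims---that $p$ is a well-defined $\ast$-homomorphism, that it is surjective, and that $\ker p=I$---by working inside the faithful Fock representation $(\pi_{P_{0}},\mcal{F}(\ell^{2}(\mfrak{X})))$ and then passing to an inductive limit over finite intervals of $\mfrak{X}$. Since $C^{\ast}[\mfrak{S}\wr\mbb{Z}_{2}]$ is the full group C$^{\ast}$-algebra, to define $p$ it suffices to exhibit a unitary representation of $\mfrak{S}\wr\mbb{Z}_{2}$ inside $\mcal{A}(\mcal{K},\Gamma)$, and because $\pi_{P_{0}}$ is faithful on $\mcal{C}(\mcal{K},\Gamma)$ I would verify the defining relations after applying $\pi_{P_{0}}$, by computing the action on the orthonormal basis $\{e_{\omega}\}_{\omega\in\Omega^{\circ}}$. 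On this basis $\eta_{x}$ is diagonal with eigenvalue $+1$ if $x\notin\omega$ and $-1$ if $x\in\omega$, so the $p(\varepsilon_{x})$ are commuting self-adjoint involutions. The decisive computation is $\pi_{P_{0}}(p(s_{x,y}))e_{\omega}=e_{s_{x,y}\omega}$: when $x,y$ are both occupied or both empty the projection $\tfrac{1+\eta_{x}\eta_{y}}{2}$ acts as the identity while the hopping term $a_{x}^{\ast}a_{y}+a_{y}^{\ast}a_{x}$ annihilates $e_{\omega}$ and $s_{x,y}\omega=\omega$; when exactly one is occupied, $\tfrac{1+\eta_{x}\eta_{y}}{2}$ kills $e_{\omega}$, the coefficient of the hopping term collapses to $\eta_{(x,y)}$, and the fermionic sign $(-1)^{\#\{z\in\omega:\,x<z<y\}}$ produced by $a_{y}^{\ast}a_{x}e_{\omega}$ is cancelled exactly by the Jordan--Wigner string $\eta_{(x,y)}$. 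Thus $\pi_{P_{0}}\circ p$ is the genuine (sign-twisted) Koopman representation of $\mfrak{S}\wr\mbb{Z}_{2}$ on the Fock space, all group relations hold there, and faithfulness of $\pi_{P_{0}}$ transports them to $\mcal{A}(\mcal{K},\Gamma)$.

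For surjectivity I would recover the topological generators $a_{x}^{\ast}a_{y}$ of $\mcal{A}(\mcal{K},\Gamma)$ from the image of $p$. One has $a_{x}^{\ast}a_{x}=\tfrac{1-\eta_{x}}{2}$; and for $x<y$, since $\eta_{(x,y)}$ commutes with $T:=a_{x}^{\ast}a_{y}+a_{y}^{\ast}a_{x}$ and $\tfrac{1+\eta_{x}\eta_{y}}{2}$ annihilates the range of $T$, the definition collapses to $p(s_{x,y})-\tfrac{1+\eta_{x}\eta_{y}}{2}=\eta_{(x,y)}T$, so $T=\eta_{(x,y)}\big(p(s_{x,y})-\tfrac{1+\eta_{x}\eta_{y}}{2}\big)$ lies in the image. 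Sandwiching $T$ between the occupation projections $\tfrac{1\mp\eta_{x}}{2}$ and $\tfrac{1\pm\eta_{y}}{2}$ isolates $a_{x}^{\ast}a_{y}$ from $a_{y}^{\ast}a_{x}$, and the case $x>y$ follows by taking adjoints.

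The inclusion $I\subseteq\ker p$ is immediate, since $(1\mp\eta_{x})(1\mp\eta_{y})$ is proportional to the projection onto configurations in which $x,y$ are both occupied (respectively both empty), on which $p(s_{x,y})$ already acts as the identity, so that $(1-p(s_{x,y}))(1\mp\eta_{x})(1\mp\eta_{y})=0$. Hence $p$ descends to a surjection $\bar{p}\colon C^{\ast}[\mfrak{S}\wr\mbb{Z}_{2}]/I\to\mcal{A}(\mcal{K},\Gamma)$, and the task reduces to proving $\bar{p}$ injective. I would do this at the level of finite intervals: writing $\mfrak{X}$ as an increasing union of intervals with $N$ sites, both algebras are the corresponding inductive limits, and $\bar{p}$ respects the inclusions because the string $\eta_{(x,y)}$ for $x,y$ in an interval involves only sites inside that interval. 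On an $N$-site interval $\mcal{A}(\mcal{K},\Gamma)$ restricts to the gauge-invariant CAR algebra $\bigoplus_{k=0}^{N}M\big(\binom{N}{k}\big)$, while $C^{\ast}[\mfrak{S}\wr\mbb{Z}_{2}]/I$ restricts to $\mbb{C}[B_{N}]/I_{N}$ for the hyperoctahedral group $B_{N}=\mfrak{S}_{N}\ltimes\mbb{Z}_{2}^{N}$.

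The heart of the matter is then a representation-theoretic dimension count. By Clifford--Mackey theory the irreducibles of $B_{N}$ are labelled by bipartitions $(\lambda,\mu)$ with $|\lambda|+|\mu|=N$, and on a $\mbb{Z}_{2}^{N}$-weight space with $a$ plus-signs and $b$ minus-signs the stabiliser $\mfrak{S}_{a}\times\mfrak{S}_{b}$ acts through $S^{\lambda}\otimes S^{\mu}$. The generators of $I_{N}$ force $s_{x,y}$ to act trivially whenever the two signs coincide, hence force $S^{\lambda}$ and $S^{\mu}$ to be trivial; only the single-row bipartitions $((a),(b))$ survive, with $\dim V_{((a),(b))}=\binom{N}{a}$. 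Therefore $\dim\big(\mbb{C}[B_{N}]/I_{N}\big)=\sum_{a=0}^{N}\binom{N}{a}^{2}=\binom{2N}{N}$, which equals the dimension $\sum_{k}\binom{N}{k}^{2}$ of the interval algebra $\bigoplus_{k=0}^{N}M\big(\binom{N}{k}\big)$, and a surjection between finite-dimensional C$^{\ast}$-algebras of equal dimension is an isomorphism. Thus $\bar{p}$ is an isomorphism on each interval, hence injective in the limit, giving $\ker p=I$. I expect the two genuinely delicate points to be the sign bookkeeping that makes $\eta_{(x,y)}$ cancel the Jordan--Wigner phase in the first step, and the precise determination of which blocks of $\mbb{C}[B_{N}]$ are annihilated by $I_{N}$ in the last step---the clean dimension match $\binom{2N}{N}$ being what finally closes the argument.
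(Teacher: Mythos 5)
The paper you are working from never actually proves this proposition: it sits in an expository subsection whose results are quoted from Olshanski \cite{Olshanski2020}, so there is no in-paper proof to compare against, and your argument has to stand on its own. On its own terms it is essentially correct, and its first step is consistent with what the paper does elsewhere: the computation $\pi_{P_{0}}(p(s_{x,y}))e_{\omega}=e_{s_{x,y}(\omega)}$, with the string $\eta_{(x,y)}$ cancelling the fermionic sign $(-1)^{\ell_{2}}$ of the hopping term, is exactly the case analysis the paper carries out inside its proof of the warm-up perfectness Proposition, and combined with faithfulness of $\pi_{P_{0}}$ and the universal property of the full group C$^{\ast}$-algebra (legitimate here: the group is locally finite, hence amenable, so the completion is unambiguous) it yields well-definedness of $p$. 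Your reduction $p(s_{x,y})=\frac{1+\eta_{x}\eta_{y}}{2}+\eta_{(x,y)}T$ with $T=a_{x}^{\ast}a_{y}+a_{y}^{\ast}a_{x}$ is correct, the inclusion $I\subseteq\ker p$ is verified correctly, and the representation-theoretic core is right: the generators of $I_{N}$ kill precisely the irreducibles of $B_{N}$ not of the form $((a),(b))$, so $\dim\big(\mbb{C}[B_{N}]/I_{N}\big)=\sum_{a=0}^{N}\binom{N}{a}^{2}=\binom{2N}{N}=\dim\bigoplus_{k=0}^{N}M\big(\tbinom{N}{k}\big)$, and a surjection of finite-dimensional C$^{\ast}$-algebras of equal dimension is an isomorphism.

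Two points need repair. First, a sign slip: with $\eta_{z}=1-2a_{z}^{\ast}a_{z}$, your sandwich $\frac{1\mp\eta_{x}}{2}\,T\,\frac{1\pm\eta_{y}}{2}$ is actually zero; since $a_{x}^{\ast}a_{y}$ requires $y$ occupied in the source and produces $x$ occupied in the target, the correct identities are $\frac{1-\eta_{x}}{2}\,T\,\frac{1-\eta_{y}}{2}=a_{x}^{\ast}a_{y}$ and $\frac{1+\eta_{x}}{2}\,T\,\frac{1+\eta_{y}}{2}=a_{y}^{\ast}a_{x}$ (same sign at both sites). This is cosmetic and does not affect surjectivity. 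Second, and more substantively, you assert rather than prove that $C^{\ast}[\mfrak{S}\wr\mbb{Z}_{2}]/I$ is the inductive limit of the $\mbb{C}[B_{N}]/I_{N}$; a priori $I\cap\mbb{C}[B_{N}]$ could exceed $I_{N}$, and the transfer of finite-level injectivity to the limit is exactly the point at issue. The cleanest repair avoids forming that inductive system altogether: since $C^{\ast}[\mfrak{S}\wr\mbb{Z}_{2}]=\overline{\bigcup_{N}\mbb{C}[B_{N}]}$ is an increasing union of finite-dimensional C$^{\ast}$-subalgebras, any closed two-sided ideal $J$ satisfies $J=\overline{\bigcup_{N}\big(J\cap\mbb{C}[B_{N}]\big)}$ (because each $\mbb{C}[B_{N}]/(J\cap\mbb{C}[B_{N}])\hookrightarrow C^{\ast}[\mfrak{S}\wr\mbb{Z}_{2}]/J$ is isometric). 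Your surjectivity argument restricted to an interval, together with $I_{N}\subseteq\ker\big(p|_{\mbb{C}[B_{N}]}\big)$ and the dimension count, forces $\ker\big(p|_{\mbb{C}[B_{N}]}\big)=I_{N}$; applying the lemma to $J=\ker p$ then gives $\ker p=\overline{\bigcup_{N}I_{N}}\subseteq I$, which together with $I\subseteq\ker p$ closes the proof. With these two patches your argument is complete.
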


\subsection{Construction of states}
When $M$ is a probability measure on $(\Omega,\Sigma)$, we may define a state $\varphi^{M}$ over $C^{\ast}[\mfrak{S}\wr\mbb{Z}_{2}]/I\simeq \mcal{A}(\mcal{K},\Gamma)$ by
\begin{equation*}
	\varphi^{M}(A):=\big(\mbb{I},\mcal{T}^{M}(A)\mbb{I}\big)_{L^{2}(\Omega, M)},\qquad A\in C^{\ast}[\mfrak{S}\wr\mbb{Z}_{2}]/I\simeq \mcal{A}(\mcal{K},\Gamma),
\end{equation*}
where $\mbb{I}\in L^{2}(\Omega, M)$ is the unit constant function.
When we restrict this state on the subalgebra $C(\Omega)$, we see that
\begin{equation*}
	\varphi^{M}(F)=\int_{\Omega}F(\omega)M({\rm d}\omega),\qquad F\in C(\Omega),
\end{equation*}
which is exactly the expectation value of $F$ under the probability measure~$M$.
Therefore, if $M$ is a PfPP,
\begin{equation*}
	\varphi^{M}\big(a_{x_{1}}^{\ast}\cdots a_{x_{n}}^{\ast}a_{x_{n}}\cdots a_{x_{1}}\big)=\int_{\Omega_{x_{1},\dots, x_{n}}}M({\rm d}\omega)=\pf\big[\mbb{K}^{M}(x_{i},x_{j})\big]_{1\le i,j\le n}.
\end{equation*}
Therefore, we are tempted to expect that the state $\varphi^{M}$ is quasifree, but it is not obvious.

\subsection{Perfectness: Warm-up}
To illustrate an idea of proving Theorem~\ref{thm:perfectness_Schur}, let us start with a simple example.
Let us take linearly independent vectors ${\bf v}=\big\{v_{n}\in\ell^{2}(\mfrak{X})\,|\, n=1,\dots, N\big\}$ and let $K_{{\bf v}}$ be the orthogonal projection to the subspace spanned by ${\bf v}$. Then, the projection $P_{{\bf v}}=(I-JK_{{\bf v}}J)\oplus K_{{\bf v}}$ on $\mcal{K}$ lies in $\mrm{Gr}(\mcal{K},\Gamma)$. Since, in particular, it preserves each component of $\mcal{K}=\ell^{2}(\mfrak{X})\oplus\ell^{2}(\mfrak{X})$, it determines a DPP $M^{P_{{\bf v}}}$ with correlation kernel $K_{{\bf v}}(x,y)=(e_{x},K_{{\bf v}}e_{y})$.

\begin{Proposition}
\label{prop:perfectness_warmup}
Let us expand each vector $v_{n}$ as $v_{n}=\sum_{x\in\mfrak{X}}v_{n}(x)e_{x}$, $n=1,\dots, N$.
If
\begin{equation*}
	\det (v_{i}(x_{j}) )_{1\le i,j\le N}\ge 0,\qquad x_{1}>\cdots >x_{N},
\end{equation*}
then the DPP $M^{P_{{\bf v}}}$ is perfect.
\end{Proposition}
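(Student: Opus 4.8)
The plan is to exhibit explicitly the quasi-free state realizing $\varphi^{M}$, namely the one $\varphi_{P_{\mathbf{v}}}$ attached to $P_{\mathbf{v}}$ by Lemma~\ref{lem:quasi-free_covariance}, and to intertwine the two representations carrying these states. First I would pass to an orthonormal basis $\{\psi_{i}\}_{i=1}^{N}$ of $K_{\mathbf{v}}\ell^{2}(\mfrak{X})$ obtained from $\mathbf{v}$ by Gram--Schmidt; the change of basis is triangular with positive diagonal, so $\det(\psi_{i}(x_{j}))_{1\le i,j\le N}$ has the same sign as $\det(v_{i}(x_{j}))_{1\le i,j\le N}$ for every $x_{1}>\cdots>x_{N}$, and the hypothesis becomes $\det(\psi_{i}(x_{j}))\ge 0$. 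Since $P_{\mathbf{v}}-P_{0}$ is finite rank, the implementing unitary of the proof of Proposition~\ref{prop:finitary_PfPP} gives $\varphi_{P_{\mathbf{v}}}(A)=(\mcal{U}\bm{1},\pi_{P_{0}}(A)\mcal{U}\bm{1})_{\mcal{F}(\ell^{2}(\mfrak{X}))}$, and the filled single-particle vector $\mcal{U}\bm{1}=\psi_{1}\wedge\cdots\wedge\psi_{N}=:\Psi$ expands as $\Psi=\sum_{\omega}\det(\psi_{i}(x_{j}))\,e_{\omega}$; in particular this re-proves that $M^{P_{\mathbf{v}}}$ is supported on $N$-point configurations with weight $M^{P_{\mathbf{v}}}(\omega)=|\det(\psi_{i}(x_{j}))|^{2}$.

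As $M=M^{P_{\mathbf{v}}}$ is purely atomic, the normalized indicators $\delta_{\omega}:=\chi_{\{\omega\}}/\sqrt{M(\omega)}$ form an orthonormal basis of $L^{2}(\Omega,M)$, and I would define a unitary $W\colon L^{2}(\Omega,M)\to\mcal{F}(\ell^{2}(\mfrak{X}))$ onto $\overline{\mrm{Span}\{e_{\omega}\}}$ by $\delta_{\omega}\mapsto e_{\omega}$. Then $W\mbb{I}=\sum_{\omega}\sqrt{M(\omega)}\,e_{\omega}$, and the hypothesis $\det(\psi_{i}(x_{j}))\ge 0$ forces $\det(\psi_{i}(x_{j}))=\sqrt{M(\omega)}$, whence $W\mbb{I}=\Psi$. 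It then remains only to show that $W$ intertwines $\mcal{T}^{M}$ and $\pi_{P_{0}}$ on $\mcal{A}(\mcal{K},\Gamma)$, for in that case
\[ \varphi^{M}(A)=\big(\mbb{I},\mcal{T}^{M}(A)\mbb{I}\big)_{L^{2}(\Omega,M)}=\big(\mcal{U}\bm{1},\pi_{P_{0}}(A)\mcal{U}\bm{1}\big)_{\mcal{F}(\ell^{2}(\mfrak{X}))}=\varphi_{P_{\mathbf{v}}}(A),\qquad A\in\mcal{A}(\mcal{K},\Gamma), \]
which is exactly perfectness.

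Because $\mcal{A}(\mcal{K},\Gamma)\simeq C^{\ast}[\mfrak{S}\wr\mbb{Z}_{2}]/I$ is generated by the images of $\varepsilon_{x}$ and $s_{x,y}$, it suffices to check the intertwining on these. For $\varepsilon_{x}$ it is immediate, since $\mcal{T}^{M}(\varepsilon_{x})$ and $\pi_{P_{0}}(\eta_{x})$ both act diagonally by the eigenvalue $1-2\omega(x)$. For a transposition $s_{x,y}$ with $x<y$ I would first compute, from the Koopman formula and the $1$-cocycle, that $\mcal{T}^{M}(s_{x,y})\delta_{\omega}=\delta_{s_{x,y}\omega}$: the factor $\phi(s_{x,y}\omega,s_{x,y})^{1/2}=\sqrt{M(\omega)/M(s_{x,y}\omega)}$ cancels against the ratio $\sqrt{M(s_{x,y}\omega)/M(\omega)}$, so on the basis $\{\delta_{\omega}\}$ the representation of transpositions is simply the permutation of basis vectors (here $\mfrak{S}$-quasi-invariance guarantees $M(s_{x,y}\omega)>0$ whenever $M(\omega)>0$). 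On the Fock side I would verify $\pi_{P_{0}}(p(s_{x,y}))e_{\omega}=e_{s_{x,y}\omega}$ from the explicit formula for $p(s_{x,y})$: the summand $\frac{1+\eta_{x}\eta_{y}}{2}$ fixes $e_{\omega}$ when $x,y$ are both occupied or both empty (where $s_{x,y}\omega=\omega$ and the hopping term $a_{x}^{\ast}a_{y}+a_{y}^{\ast}a_{x}$ vanishes), whereas when exactly one of $x,y$ lies in $\omega$ the hopping moves the particle and the string $\eta_{(x,y)}=\prod_{x<z<y}\eta_{z}$ contributes its eigenvalue $(-1)^{\#\{z\in\omega\,:\,x<z<y\}}$.

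The hard part is precisely this last sign bookkeeping: the fermionic reordering sign produced by $a_{y}^{\ast}a_{x}$ (resp.\ $a_{x}^{\ast}a_{y}$) acting on $e_{\omega}$ equals $(-1)^{\#\{z\in\omega\,:\,x<z<y\}}$, and this must cancel exactly the eigenvalue of the Jordan--Wigner string $\eta_{(x,y)}$ so that the net action is the clean transposition $e_{\omega}\mapsto e_{s_{x,y}\omega}$ with coefficient $+1$. Granting this matching, $W$ intertwines both generators, hence all of $\mcal{A}(\mcal{K},\Gamma)$ by multiplicativity and continuity, and the displayed chain of identities yields $\varphi^{M}=\varphi_{P_{\mathbf{v}}}|_{\mcal{A}(\mcal{K},\Gamma)}$, proving perfectness. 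I expect the only genuine subtlety to be this string/sign cancellation for $s_{x,y}$; the cocycle cancellation and the $\varepsilon_{x}$ case are routine, and the identification $\mcal{U}\bm{1}=\psi_{1}\wedge\cdots\wedge\psi_{N}$ is standard for projection DPPs.
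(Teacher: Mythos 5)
Your proposal is correct and follows essentially the same route as the paper's own proof: implement the Bogoliubov transformation on the Fock representation $\pi_{P_{0}}$, use the sign hypothesis to identify the transformed vacuum with $\sum_{\omega}M^{P_{\bf v}}(\omega)^{1/2}e_{\omega}$, define the isometry $\delta_{\omega}\mapsto e_{\omega}$ (up to normalization conventions), and verify the intertwining on the generators $\varepsilon_{x}$ and $s_{x,y}$, where the hopping sign $(-1)^{\#\{z\in\omega\,:\,x<z<y\}}$ cancels the Jordan--Wigner string eigenvalue exactly as in the paper's case-by-case computation. Your Gram--Schmidt observation (triangular change of basis with positive diagonal) is a marginally cleaner way of disposing of the phase constants $e^{{\rm i}\theta}$, $e^{{\rm i}\varphi}$, $Z_{N}$ that the paper tracks explicitly, but the argument is otherwise the same.
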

\begin{proof}
It is immediate that $P_{{\bf v}}-P_{0}$ is of Hilbert--Schmidt class as far as $N<\infty$. Therefore, the Bogoliubov automorphism induced from a unitary $V_{{\bf v}}$ such that $P_{{\bf }}=V_{{\bf v}}^{\ast}P_{0}V_{{\bf v}}$ is unitarily implementable on $\big(\pi_{P_{0}},\mcal{F}\big(\ell^{2}(\mfrak{X})\big)\big)$.
Let us denote such a unitary operator by $\mcal{V}_{{\bf v}}$.
Since the representation $\big(\pi_{P_{0}}\circ \alpha_{V},\mcal{F}\big(\ell^{2}(\mfrak{X})\big)\big)$ is the GNS representation of $\varphi_{P_{{\bf v}}}$, we have
\begin{equation*}
	\varphi_{P_{{\bf v}}}(A)=\big(\bm{1}, \mcal{V}^{\ast}_{{\bf v}}\pi_{P_{0}}(A)\mcal{V}_{{\bf v}}\bm{1}\big)_{\mcal{F}(\ell^{2}(\mfrak{X}))},\qquad A\in \mcal{C}(\mcal{K},\Gamma).
\end{equation*}

Let us take an orthonormal basis $\{\phi_{n}\}_{n=1}^{N}$ of the space $\mrm{Span}\{v_{n}\}_{n=1}^{N}$ and expand each of them as $\phi_{n}=\sum_{x\in\mfrak{X}}\phi_{n}(x)e_{x}$.
In the particular case we are considering, the unitary operator $\mcal{V}_{{\bf v}}$ acts on the vacuum vector as \cite{Ruijsenaars1978}
\begin{equation*}
	\mcal{V}_{{\bf v}}\bm{1}=a_{P_{0}\mcal{K}}^{\ast}(\phi_{1})\cdots a_{P_{0}\mcal{K}}^{\ast}(\phi_{N})\bm{1}={\rm e}^{{\rm i}\theta}\sum_{\omega=\{x_{1}>\cdots >x_{N}\}\in\Omega_{N}}\det \left(\phi_{i}(x_{j})\right)_{1\le i,j\le N}e_{\omega}
\end{equation*}
with some constant $\theta\in [0,2\pi)$ independent of $\omega\in\Omega_{N}$.
Now, there exist constants $Z_{N}> 0$ and $\varphi\in [0,2\pi)$ independent of $x_{1},\dots, x_{N}$ such that
\begin{equation*}
{\rm e}^{{\rm i}\varphi}\det \left(\phi_{i}(x_{j})\right)_{1\le i,j\le N}=Z_{N}^{-1/2}\det \left(v_{i}(x_{j})\right)_{1\le i,j\le N}.
\end{equation*}
Note that the right hand side is non-negative from the assumption. Therefore, we have
\begin{equation}\label{eq:expansion_transformed_vacuum}
	\mcal{V}_{{\bf v}}\bm{1}={\rm e}^{{\rm i}(\theta-\varphi)}\sum_{\omega\in \Omega_{N}} M^{P_{{\bf v}}}(\omega)^{1/2}e_{\omega}.
\end{equation}
Here, we can choose $\theta=\varphi$ without loss of generality.

Next, we construct an injective homomorphism $\iota\colon L^{2}\big(\Omega, M^{P_{{\bf v}}}\big)\hookrightarrow \mcal{F}\big(\ell^{2}(\mfrak{X})\big)$ of Hilbert spaces. Noting that $L^{2}\big(\Omega, M^{P_{{\bf v}}}\big)$ is realized as $ \ell^{2}\big(\mrm{supp}M^{P_{{\bf v}}}, M^{P_{{\bf v}}}\big)$, we define it as
\begin{equation*}
	\iota\colon \ \delta_{\omega}\mapsto M^{P_{{\bf v}}}(\omega)^{1/2}e_{\omega},
\end{equation*}
where $\delta_{\omega}$, $\omega\in\Omega$ is the unit function supported at $\omega$. Then it is obviously an isometry. In particular, the unit constant function $\mbb{I}$ is mapped to $\mcal{V}_{{\bf v}}\bm{1}$. It remains to show that the homomorphism $\iota$ intertwines the representations $\mcal{T}^{M^{P_{{\bf v}}}}$ and $\pi_{P_{0}}$:
\begin{equation*}
	\iota\circ \mcal{T}^{M^{P_{{\bf v}}}}(A)= \pi_{P_{0}}(A)\circ \iota,\qquad A\in \mcal{A}(\mcal{K},\Gamma).
\end{equation*}
In fact, it implies that $\varphi^{M^{P_{{\bf v}}}}=\varphi_{P_{{\bf v}}}|_{\mcal{A}(\mcal{K},\Gamma)}$.

Let us investigate the actions $\pi_{P_{0}}(p(\varepsilon_{x}))$, $x\in\mfrak{X}$ and $\pi_{P_{0}}(p(s_{x,y}))$, $x<y$. It is immediate that
$\pi_{P_{0}}(p(\varepsilon_{x}))e_{\omega}=d_{x}(\omega) e_{\omega}$,
which implies that
\begin{equation*}
	\pi_{P_{0}}(F)e_{\omega}=F(\omega) e_{\omega},\qquad \omega\in \Omega^{\circ},\qquad F\in C(\Omega).
\end{equation*}
To investigate the action of $\pi_{P_{0}}(p(s_{x,y}))$, $x<y$, we take $\omega\in \Omega^{\circ}$ arbitrarily and set $\pr{\omega}=s_{x,y}(\omega)$.
We consider the following distinct cases:
\begin{enumerate}\itemsep=0pt\samepage
\item 	When $\omega$ does not contain neither $x$ nor $y$, we have $\pr{\omega}=\omega$.
We have
				\begin{equation*}
					\pi_{P_{0}}(a_{x}^{\ast}a_{y}+a_{y}^{\ast}a_{x})e_{\omega}=0.
				\end{equation*}
				Hence,
				\begin{equation*}
\pi_{P_{0}}(p(s_{x,y}))e_{\omega}=\frac{1}{2}\pi_{P_{0}}(1+\eta_{x}\eta_{y})e_{\omega}=e_{\omega}=e_{\pr{\omega}}.
				\end{equation*}
\item 	When $\omega$ contains both $x$ and $y$, we again have $\pr{\omega}=\omega$. In this case,
				\begin{equation*}
\pi_{P_{0}}(a_{x}^{\ast}a_{y}+a_{y}^{\ast}a_{x})e_{\omega}=-\pi_{P_{0}}(a_{y}a_{x}^{\ast}+a_{x}a_{y}^{\ast})e_{\omega}=0.
				\end{equation*}
				Hence,
				\begin{equation*}
\pi_{P_{0}}(p(s_{x,y}))e_{\omega}=\frac{1}{2}\pi_{P_{0}}(1+\eta_{x}\eta_{y})e_{\omega}=e_{\omega}=e_{\pr{\omega}}.
				\end{equation*}
		\item 	When $x\in\omega$ and $y\not\in\omega$, $\pr{\omega}=\omega\backslash \{x\}\cup \{y\}$.
				Suppose that $e_{\omega}$ has the form
				\begin{equation*}
e_{\omega}=\underbrace{\cdots}_{\ell_{1}} \wedge \hat{e}_{y} \wedge\underbrace{\cdots}_{\ell_{2}} \wedge e_{x}\wedge\cdots,
				\end{equation*}
				where $\hat{e}_{y}$ means that $e_{y}$ is removed from the corresponding position.
				Then, we have
				\begin{gather*}
\pi_{P_{0}}(a^{\ast}_{x}a_{y}+a_{y}^{\ast}a_{x})e_{\omega}=(-1)^{\ell_{1}+\ell_{2}}(-1)^{\ell_{1}}e_{\pr{\omega}}=(-1)^{\ell_{2}}e_{\pr{\omega}} \\
					\pi_{P_{0}}(1+\eta_{x}\eta_{y})e_{\omega}=0, \qquad
					\pi_{P_{0}}(1-\eta_{x}\eta_{y})e_{\omega}=2e_{\omega}, \qquad
					\pi_{P_{0}}(\eta_{(x,y)})e_{\omega}=(-1)^{\ell_{2}}e_{\omega}.
				\end{gather*}
				These properties verify that
				\begin{equation*}
					\pi_{P_{0}}(p(s_{x,y}))e_{\omega}=e_{\pr{\omega}}.
				\end{equation*}
		\item 	When $x\not\in\omega$ and $y\in\omega$, the same property is verified in a similar argument.
		\end{enumerate}
		To conclude, we have $\pi_{P_{0}}(p(g))e_{\omega}=e_{g(\omega)}$, $g\in\mfrak{S}$.

For $F\in C(\Omega)$, it is obvious that
\begin{equation*}
	\iota\circ \mcal{T}^{M^{P_{{\bf v}}}}(F)\delta_{\omega}=M^{P_{{\bf v}}}(\omega)^{1/2}F(\omega)e_{\omega}=\pi_{P_{0}}(F)\circ \iota \delta_{\omega},\qquad \omega\in\Omega_{N}.
\end{equation*}
For $g\in\mfrak{S}$, we have
\begin{equation*}
	\iota\circ \mcal{T}^{M^{P_{{\bf v}}}}(p(g))\delta_{\omega}=\iota \delta_{g(\omega)}\left(\frac{{}^{g}M^{P_{{\bf v}}}}{M^{P_{{\bf v}}}}(g(\omega))\right)^{1/2}=M^{P_{{\bf v}}}(\omega)^{1/2}e_{g(\omega)},\qquad \omega\in\Omega_{N},
\end{equation*}
while, on the other hand,
\begin{equation*}
	\pi_{P_{0}}(p(g))\circ \iota \delta_{\omega}=\pi_{P_{0}}(p(g)) M^{P_{{\bf v}}}(\omega)^{1/2}e_{\omega}=M^{P_{{\bf v}}}(\omega)^{1/2}e_{g(\omega)},\qquad \omega\in \Omega_{N}.
\end{equation*}
Therefore, we can conclude that $\iota$ intertwines representations $\mcal{T}^{M^{P_{{\bf v}}}}$ and $\pi_{P_{0}}$, and the proof is complete.
\end{proof}

Notice that, in the above arguments, it is essential that the coefficients in the expansion~(\ref{eq:expansion_transformed_vacuum}) are non-negative; in general, the squared absolute value of each coefficient gives a weight of the probability measure.

\begin{Example}
Suppose that $\mfrak{X}\subset\mbb{R}$ with the induced order.
For a weight function $W(x)$ such that
\begin{equation*}
	\sum_{x\in\mfrak{X}}x^{2N}W(x)^{1/2}<\infty,
\end{equation*}
we take $v_{n}=x^{N-n}W(x)^{1/2}$, $n=1,\dots, N$. The corresponding DPP $M^{P_{{\bf v}}}$ is a discrete orthogonal polynomial ensemble~\cite{Konig2005}.
It is immediate that
\begin{equation*}
	\det \big(x^{N-i}_{j}W(x_{j})^{1/2}\big)_{1\le i,j\le N}=\prod_{i=1}^{N}W(x_{i})^{1/2}\prod_{1\le i<j\le N}(x_{i}-x_{j})\ge 0,\qquad x_{1}>\cdots >x_{N}.
\end{equation*}
Therefore, $M^{P_{{\bf v}}}$ is perfect as was shown in~\cite{Olshanski2020} by directly estimating the correlation kernel.
\end{Example}

\subsection{Schur measures}\label{subsect:Schur_measures}
\subsubsection{Schur functions and positive specialization}
Let $\Lambda_{n}=\mbb{C}[x_{1},\dots, x_{n}]^{\mfrak{S}_{n}}$ be the ring of symmetric polynomials of $n$ variables. We write $\Lambda=\varprojlim_{n}\Lambda_{n}$ for the projective limit in the category of graded rings and call it the ring of symmetric functions. Note that an object like $\prod_{i\ge 1}(1+x_{i})$ is not, counter-intuitively, a symmetric function. We set $p_{n}=\sum_{i\ge 1}x_{i}^{n}$ and call it the $n$-th power-sum symmetric function. Then, the power-sum symmetric functions freely generate $\Lambda$ so that $\Lambda=\mbb{C}[p_{1},p_{2},\dots]$. The ring of symmetric functions $\Lambda$ admits a distinguished basis $\{s_{\lambda}\,|\,\lambda\in\mbb{Y}\}$ constituted with the Schur functions, which are characterized in several manners (see~\cite{Macdonald1999}).

An algebraic homomorphism $\tau\colon \Lambda\to\mbb{C}$ is said to be Schur-positive if $\tau(s_{\lambda})\ge 0$ for all $\lambda\in\mbb{Y}$. It is a classical result~\cite{AissenSchoenbergWhitney1952, Edrei1952, Thoma1964} that Schur-positive specializations are parametrized by the set~$\mbb{T}$ in the way that $\rho=(\alpha;\beta)$ gives a Schur-positive specialization $\tau_{\rho}$ defined by $\tau_{\rho}(p_{1})=1$ and
\begin{equation*}
	\tau_{\rho}(p_{n})=\sum_{i\ge 1}\alpha_{i}^{n}+(-1)^{n-1}\sum_{i\ge 1}\beta_{i}^{n},\qquad n\ge 2.
\end{equation*}
For a symmetric function $F\in \Lambda$, we often write its image under $\tau_{\rho}$ as $F(\rho)$ instead of $\tau_{\rho}(F)$.

\subsubsection{Free fermion description} Here, we consider the case when $\mfrak{X}=\mbb{Z}+\frac{1}{2}$. Let us write $P^{\mrm{S}}_{0}$ for the orthogonal projection onto $\mcal{K}^{+}_{\mbb{Z}_{\ge 0}+\frac{1}{2}}\oplus \mcal{K}^{-}_{\mbb{Z}_{\le 0}-\frac{1}{2}}$. Then, it is obvious that $P^{\mrm{S}}_{0}\in\mrm{Gr}(\mcal{K},\Gamma)$. It is standard to realize the corresponding Fock space $\mcal{F}(P^{\mrm{S}}_{0}\mcal{K})$ as a space of infinite wedges. Let $\Omega^{\mrm{S}}$ be the collection of $\omega\in\Omega$ such that
\begin{equation*}
	\omega_{+}:=\omega\cap \left(\mbb{Z}_{\ge 0}+\frac{1}{2}\right)\qquad \mbox{and}\qquad \omega_{-}:=\left(\mbb{Z}_{\le 0}-\frac{1}{2}\right)\bigg\backslash \omega
\end{equation*}
are finite.
Equivalently, each element $\omega\in\Omega^{\mrm{S}}$ is a collection $\{x_{1}>x_{2}>\cdots\}$ such that $x_{1}<\infty$ and $x_{j+1}=x_{j}-1$ for all sufficiently large $j$.
Then, the Fock space $\mcal{F}\big(P^{\mrm{S}}_{0}\mcal{K}\big)$ admits a complete orthonormal system $\big\{e_{\omega}\,|\,\omega\in \Omega^{\mrm{S}}\big\}$, where
\begin{equation*}
	e_{\omega}=e_{x_{1}}\wedge e_{x_{2}}\wedge\cdots,\qquad \omega=\{x_{1}>x_{2}>\cdots\}\in\Omega^{\mrm{S}}.
\end{equation*}
The action of the CAR algebra $\mcal{C}(\mcal{K},\Gamma)$ has a natural description:
\begin{equation*}
	\pi_{P_{0}^{\mrm{S}}}(a_{x}^{\ast}) \eta:=e_{x}\wedge \eta,\qquad x\in\mfrak{X},\qquad \eta\in \mcal{F}\big(P_{0}^{\mrm{S}}\mcal{K}\big),
\end{equation*}
and $\pi_{P_{0}^{\mrm{S}}}(a_{x})$ acts as its adjoint operator. The cyclic vector $\bm{1}$ is identified with
\begin{equation*}
	\bm{1}=e_{-1/2}\wedge e_{-3/2}\wedge\cdots.
\end{equation*}

Under the embedding, $\mbb{M}\colon \mbb{Y}\hookrightarrow \Omega$, the image is included in $\Omega^{\mrm{S}}$. Strictly speaking, the image is isomorphic to a subset of $\Omega^{\mrm{S}}$ consisting of $\omega$ such that $\#\omega_{+}=\#\omega_{-}$. Under the inclusion $\mbb{M}$, the empty partition $\empty$ is mapped to the cyclic vector $\bm{1}$.

We set $\mcal{D}^{\mrm{S}}=\mrm{Span}\big\{e_{\omega}\,|\,\omega\in\Omega^{\mrm{S}}\big\}$ for a dense subspace of the Fock space $\mcal{F}\big(P^{\mrm{S}}_{0}\mcal{K}\big)$
Observe that, as operators on $\mcal{F}(P^{\mrm{S}}_{0}\mcal{K})$,
\begin{equation*}
	h_{n}:=\sum_{x\in\mfrak{X}}\pi_{P_{0}^{\mrm{S}}}(a^{\ast}_{x-n}a_{x}),\qquad n\in\mbb{Z}\backslash\{0\}
\end{equation*}
make sense with a dense domain $\mcal{D}^{\mrm{S}}$ and exhibit the Heisenberg commutation relations $[h_{m},h_{n}]=m\delta_{m+n,0}$, $m,n\in\mbb{Z}\backslash\{0\}$.
Notice that $h_{n}^{\ast}=h_{-n}$, $n\in\mbb{Z}\backslash\{0\}$.
For each $\rho\in\mbb{T}^{\circ}$, we introduce operators on $\mcal{F}\big(P^{\mrm{S}}_{0}\mcal{K}\big)$ by
\begin{equation*}
	\Xi _{\pm}(\rho)=\exp\left(\sum_{n=1}^{\infty}\frac{p_{n}(\rho)}{n}h_{\pm n}\right).
\end{equation*}

\begin{Proposition}
For $\rho\in\mbb{T}^{\circ}$, the operators $\Xi_{\pm}(\rho)$ are well-defined with a dense domain $\mcal{D}^{\mrm{S}}$.
\end{Proposition}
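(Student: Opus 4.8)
The plan is to treat $\Xi_{+}(\rho)$ and $\Xi_{-}(\rho)$ separately, exploiting the fact that the two families of modes act in opposite directions with respect to the energy grading of $\mcal{F}\big(P_{0}^{\mrm{S}}\mcal{K}\big)$. The Fock space decomposes into charge sectors that are preserved by every $h_{n}$, and within each sector the energy, equal to $|\lambda|$ when $\omega$ corresponds to $\lambda$ via $\mbb{M}$, is bounded below by $0$. A direct computation on weights shows that, for $n\ge 1$, the operator $h_{n}$ lowers the energy by $n$ while $h_{-n}$ raises it by $n$; in particular $h_{n}\bm{1}=0$. Since modes of the same sign commute, $\sum_{n\ge 1}\frac{p_{n}(\rho)}{n}h_{\pm n}$ is a sum of commuting operators, and it suffices to check that each $\Xi_{\pm}(\rho)$ sends every basis vector $e_{\omega}$, $\omega\in\Omega^{\mrm{S}}$, to a vector of finite norm.

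For $\Xi_{+}(\rho)$ this is immediate. Expanding the exponential, the $k$-th term is a linear combination of products $h_{n_{1}}\cdots h_{n_{k}}$ with $n_{1},\dots,n_{k}\ge 1$, which lowers the energy by $n_{1}+\cdots+n_{k}\ge k$. Applied to $e_{\mbb{M}(\lambda)}$ of energy $|\lambda|$, every term with $k>|\lambda|$ annihilates it, because the energy cannot drop below $0$ inside a fixed charge sector. Hence $\Xi_{+}(\rho)e_{\mbb{M}(\lambda)}$ reduces to a finite linear combination of basis vectors and is trivially well-defined on $\mcal{D}^{\mrm{S}}$.

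The substance of the proposition lies with $\Xi_{-}(\rho)$, which raises the energy and so produces a genuinely infinite expansion. Here I would invoke the boson--fermion correspondence: on each charge sector the assignment $e_{\mbb{M}(\lambda)}\mapsto s_{\lambda}$ is a unitary identification with the ring of symmetric functions $\Lambda$ carrying the Hall inner product, under which $h_{-n}$ becomes multiplication by the power sum $p_{n}$. Consequently $\Xi_{-}(\rho)$ is realized as multiplication by $\exp\big(\sum_{n\ge 1}\frac{p_{n}(\rho)}{n}p_{n}\big)=\sum_{\mu}s_{\mu}(\rho)s_{\mu}$, the last equality being the Cauchy identity, and one computes $\Xi_{-}(\rho)e_{\mbb{M}(\lambda)}=\sum_{\nu}s_{\nu/\lambda}(\rho)e_{\mbb{M}(\nu)}$ in terms of skew Schur functions. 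The required finiteness is therefore $\sum_{\nu}s_{\nu/\lambda}(\rho)^{2}<\infty$, which by the skew Cauchy identity equals $\big(\sum_{\tau\subseteq\lambda}s_{\lambda/\tau}(\rho)^{2}\big)\sum_{\kappa}s_{\kappa}(\rho)^{2}$; the first factor is a finite sum and the second is the normalization constant of the Schur measure.

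The main obstacle is thus the convergence of $\sum_{\kappa}s_{\kappa}(\rho)^{2}$, and this is exactly where the hypothesis $\rho\in\mbb{T}^{\circ}$ enters: by the Cauchy identity this sum equals the infinite product $\prod_{i,j}\frac{(1+\alpha_{i}\beta_{j})(1+\beta_{i}\alpha_{j})}{(1-\alpha_{i}\alpha_{j})(1-\beta_{i}\beta_{j})}$, whose individual factors are finite precisely because $\alpha_{1}<1$ and $\beta_{1}<1$, and whose overall convergence follows from $\sum_{i}\alpha_{i}+\sum_{i}\beta_{i}\le 1$. Once finiteness of the norms is established, the same estimates show that the truncations of the exponential series converge in norm to $\sum_{\nu}s_{\nu/\lambda}(\rho)e_{\mbb{M}(\nu)}$, so that $\Xi_{-}(\rho)$ is well-defined on the dense domain $\mcal{D}^{\mrm{S}}$. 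Since the argument is uniform across charge sectors, this completes the proof.
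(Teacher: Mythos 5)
Your argument for $\Xi_{+}(\rho)$ coincides with the paper's: both use the energy grading (the paper phrases it via the energy operator $H$ and the relation $[H,h_{n}]=-nh_{n}$) to conclude that the exponential series terminates on each $e_{\omega}$, so that $\Xi_{+}(\rho)e_{\omega}\in\mcal{D}^{\mrm{S}}$. For $\Xi_{-}(\rho)$, which is indeed where the substance lies, you take a genuinely different route. The paper stays entirely inside the operator formalism: the Heisenberg relations $[h_{m},h_{n}]=m\delta_{m+n,0}$ give
\begin{equation*}
	\Xi_{+}(\rho)\Xi_{-}(\rho)=\exp\left(\sum_{n\ge 1}\frac{p_{n}(\rho)^{2}}{n}\right)\Xi_{-}(\rho)\Xi_{+}(\rho),
\end{equation*}
whence $\|\Xi_{-}(\rho)\bm{1}\|^{2}=\exp\big(\sum_{n\ge 1}p_{n}(\rho)^{2}/n\big)$, and the hypothesis $\rho\in\mbb{T}^{\circ}$ enters through an elementary estimate showing that this series converges when $\alpha_{1}<1$ and $\beta_{1}<1$; a general $e_{\omega}$ is then disposed of by the same commutation relation, $\|\Xi_{-}(\rho)e_{\omega}\|^{2}=\|\Xi_{-}(\rho)\bm{1}\|^{2}\|\Xi_{+}(\rho)e_{\omega}\|^{2}$, whose right-hand side is finite by the $\Xi_{+}$ step. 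You instead invoke the boson--fermion correspondence to obtain $\Xi_{-}(\rho)e_{\mbb{M}(\lambda)}=\sum_{\nu}s_{\nu/\lambda}(\rho)e_{\mbb{M}(\nu)}$ and reduce everything, via the skew Cauchy identity, to the convergence of $\sum_{\kappa}s_{\kappa}(\rho)^{2}$. The two computations produce the same number in disguise --- $\sum_{\kappa}s_{\kappa}(\rho)^{2}=\exp\big(\sum_{n\ge 1}p_{n}(\rho)^{2}/n\big)$ is precisely the Cauchy identity --- so your route trades the paper's self-contained commutation trick for standard symmetric-function machinery. What it buys is the explicit skew-Schur expansion of $\Xi_{-}(\rho)e_{\mbb{M}(\lambda)}$, generalizing the identity~(\ref{eq:generating_Schur}) that the paper quotes from Okounkov, together with manifest nonnegativity of all coefficients; what it costs is reliance on the (standard but here unproved) identification of $h_{-n}$ with multiplication by $p_{n}$, whereas the paper needs only the Heisenberg relations.

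Two caveats, neither fatal. First, with the paper's normalization $\tau_{\rho}(p_{1})=1$ the specialization carries a Plancherel component $\gamma=1-\sum_{i}\alpha_{i}-\sum_{i}\beta_{i}\ge 0$, so $\sum_{\kappa}s_{\kappa}(\rho)^{2}$ equals the product $\prod_{i,j}\frac{(1+\alpha_{i}\beta_{j})(1+\beta_{i}\alpha_{j})}{(1-\alpha_{i}\alpha_{j})(1-\beta_{i}\beta_{j})}$ only up to the factor $\exp\big(1-(\sum_{i}\alpha_{i}+\sum_{i}\beta_{i})^{2}\big)$; this factor is finite (it is visible in the paper's own bound), so your convergence conclusion stands, but the claimed equality is not exact. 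Second, your closing assertion that ``the truncations of the exponential series converge in norm'' glosses over the fact that those truncations are themselves infinite sums with signs; the clean statement is that each fixed energy level of $\Xi_{-}(\rho)e_{\mbb{M}(\lambda)}$ receives only finitely many contributions, the graded components are mutually orthogonal, and their squared norms sum to $\sum_{\nu}s_{\nu/\lambda}(\rho)^{2}<\infty$. This is the same level of formality the paper itself adopts, so it is an imprecision rather than a gap.
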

\begin{proof}
First, let us verify that $\Xi_{+}(\rho)e_{\omega}\in\mcal{D}^{\mrm{S}}$ is at most a finite linear combination of $e_{\omega}$, $\omega\in\Omega^{\mrm{S}}$.
To this aim, we introduce the following energy operator:
\begin{equation*}
	H=\sum_{x>0}x\pi_{P_{0}^{\mrm{S}}}(a_{x}^{\ast}a_{x})+\sum_{x<0}(-x)\pi_{P_{0}^{\mrm{S}}}(a_{x}a_{x}^{\ast}).
\end{equation*}
Then the complete orthonormal system $\big\{e_{\omega}\,|\,\omega\in\Omega^{\mrm{S}}\big\}$ diagonalizes $H$. In fact, for $\omega\in \Omega^{\mrm{S}}$, we have
\begin{equation*}
	He_{\omega}=\left(\sum_{x\in \omega_{+}}x+\sum_{x\in\omega_{-}}(-x)\right)e_{\omega}.
\end{equation*}
In particular, we see that the spectrum of $H$ coincides with $\frac{1}{2}\mbb{Z}_{\ge 0}$.
We can also show, by direct computation, that $[H,h_{n}]=-nh_{n}$, $n\in\mbb{Z}$, which implies that each operator $h_{n}$ lowers the eigenvalue of $H$ by $n$. Hence, for any $\omega\in\Omega^{\mrm{S}}$, $h_{n_{1}}\cdots h_{n_{k}}e_{\omega}=0$ whenever $n_{1}+\cdots +n_{k}$ is sufficiently large. Therefore, $\Xi_{+}(\rho)e_{\omega}\in\mcal{D}^{\mrm{S}}$.

Next, we show that $\|\Xi_{-}(\rho)\bm{1}\|<\infty$ if $\rho\in\mbb{T}^{\circ}$. From the properties $\Xi_{\pm}(\rho)^{\ast}=\Xi_{\mp}(\rho)$ and $\Xi_{+}(\rho)\bm{1}=0$, we can see that the squared norm $\|\Xi_{-}(\rho)\bm{1}\|^{2}$ appears in a commutation relation as
\begin{equation*}
	\Xi_{+}(\rho)\Xi_{-}(\rho)=\|\Xi_{-}(\rho)\bm{1}\|^{2}\Xi_{-}(\rho)\Xi_{+}(\rho).
\end{equation*}
Hence we have $\|\Xi_{-}(\rho)\bm{1}\|^{2}=\exp\left(\sum_{n=1}^{\infty}\frac{p_{n}(\rho)^{2}}{n}\right)$.
If $\rho$ is trivial; $\alpha_{1}=\beta_{1}=0$, then $\|\Xi_{-}(\rho)\bm{1}\|=e<\infty$.
Let us assume that $\alpha_{1}>0$. For any $n\in\mbb{N}$, we find the following estimation
\begin{align*}
	&\left(\sum_{j\ge 1}\alpha_{j}^{n}+(-1)^{n-1}\sum_{j\ge 1}\beta_{j}^{n}\right)^{2}
	\le \alpha_{1}^{2n}+\left(\sum_{j\ge 2}\alpha_{j}+\sum_{j\ge 1}\beta_{j}\right)^{2n}+2\alpha_{1}^{n}\left(\sum_{j\ge 2}\alpha_{j}+\sum_{j\ge 1}\beta_{j}\right)^{n},
\end{align*}
where, by assumption, $\alpha_{1}$ and $\sum_{j\ge 2}\alpha_{j}+\sum_{j\ge 1}\beta_{j}$ lie in $(0,1)$, and so does their product.
Hence, we have
\begin{align*}
	\|\Xi_{-}(\rho)\bm{1}\|^{2}\le \frac{\exp\left(1-\left(\sum_{j\ge 1}\alpha_{j}+\sum_{j\ge 1}\beta_{j}\right)^{2}\right)}{\left(1-\alpha_{1}^{2}\right)\left(1-\left(\sum_{j\ge 2}\alpha_{j}+\sum_{j\ge 1}\beta_{j}\right)^{2}\right)\left(1-\alpha_{1}\left(\sum_{j\ge 2}\alpha_{j}+\sum_{j\ge 1}\beta_{j}\right)\right)^{2}},
\end{align*}
which is finite by assumption. If $\alpha_{1}=0$, the above estimation works by using $\beta_{1}$ instead of $\alpha_{1}$.

Finally, for general $\omega\in \Omega^{\mrm{S}}$, we see that
\begin{equation*}
	\|\Xi_{-}(\rho)e_{\omega}\|^{2}=\|\Xi_{-}(\rho)\bm{1}\|^{2}\|\Xi_{+}(\rho)e_{\omega}\|^{2}
\end{equation*}
is finite.
\end{proof}

Significantly, we have (e.g., \cite[Appendix~A]{Okounkov2001})
\begin{equation}\label{eq:generating_Schur}
	\Xi_{-}(\rho)\bm{1}=\sum_{\lambda\in\mbb{Y}}s_{\lambda}(\rho)e_{\mbb{M}(\lambda)}.
\end{equation}

We define a state $\varphi_{s(\rho)}$, $\rho\in\mbb{T}^{\circ}$ over $\mcal{C}(\mcal{K},\Gamma)$ by
\begin{align*}
\varphi_{s(\rho)}(A):= \frac{\big(\Xi_{-}(\rho)\bm{1},\pi_{P^{\mrm{S}}_{0}}(A)\Xi_{-}(\rho)\bm{1}\big)_{\mcal{F}(P^{\mrm{S}}_{0} \mcal{K})}}{\|\Xi_{-}(\rho)\bm{1}\|^{2}},\qquad A\in \mcal{C}(\mcal{K},\Gamma).
\end{align*}

\begin{Proposition}\label{prop:Schur_measure_quasifree}
The state $\varphi_{s(\rho)}$ is a quasi-free state and
\begin{equation*}
	\varphi_{s(\rho)}(F)=\int_{\Omega}f(\omega)M_{s(\rho)}({\rm d}\omega),\qquad F\in C(\Omega).
\end{equation*}
\end{Proposition}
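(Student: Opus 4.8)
The plan is to establish the two assertions separately, disposing of the measure identity first since it is the shorter half. For $F\in C(\Omega)$ the operator $\pi_{P_0^{\mrm S}}(F)$ is diagonal in the orthonormal basis $\{e_\omega\}_{\omega\in\Omega^{\mrm S}}$: as $C(\Omega)$ is generated by the occupation operators $a_x^\ast a_x$ and $\pi_{P_0^{\mrm S}}(a_x^\ast a_x)e_\omega=\chi_{[x\in\omega]}e_\omega$, one has $\pi_{P_0^{\mrm S}}(F)e_\omega=F(\omega)e_\omega$. Substituting the expansion (\ref{eq:generating_Schur}) into the definition of $\varphi_{s(\rho)}$ and using orthonormality of $\{e_{\mbb M(\lambda)}\}$, the cross terms vanish and I obtain
\[
 \varphi_{s(\rho)}(F)=\frac{1}{\|\Xi_-(\rho)\bm1\|^2}\sum_{\lambda\in\mbb Y}s_\lambda(\rho)^2\,F(\mbb M(\lambda)).
\]
Since $\rho\in\mbb T^{\circ}$ is a Schur-positive specialization the weights $s_\lambda(\rho)$ are real and $\|\Xi_-(\rho)\bm1\|^2=\sum_\lambda s_\lambda(\rho)^2$ is precisely the normalizing constant, so the right-hand side equals $\int_\Omega F(\omega)\,M_{s(\rho)}({\rm d}\omega)$ by the definition of the Schur measure.

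For quasi-freeness the plan is to realize the normalized cyclic vector as the image of the Dirac vacuum under a unitarily implementable Bogoliubov automorphism, and then invoke the compatibility $\varphi_S\circ\alpha_V=\varphi_{V^\ast SV}$ proved in Section~\ref{sect:existence_PfPP}. Setting $Q:=\sum_{n\ge1}\frac{p_n(\rho)}{n}h_{-n}$, so that $\Xi_-(\rho)=e^{Q}$, and using that the $p_n(\rho)$ are real with $h_{-n}^\ast=h_n$, one has $Q^\ast=\sum_{n\ge1}\frac{p_n(\rho)}{n}h_n$ and $Q^\ast\bm1=0$. The Heisenberg relations $[h_m,h_n]=m\delta_{m+n,0}$ give the central commutator $[Q,Q^\ast]=-\sum_{n\ge1}p_n(\rho)^2/n$, whence by the Baker--Campbell--Hausdorff identity the operator $U_\rho:=e^{Q-Q^\ast}$ is unitary (as $Q-Q^\ast$ is anti-self-adjoint) and satisfies $U_\rho\bm1=e^{-\frac12\sum_n p_n(\rho)^2/n}\,\Xi_-(\rho)\bm1$. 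Thus the normalized state vector $\Xi_-(\rho)\bm1/\|\Xi_-(\rho)\bm1\|$ is exactly $U_\rho\bm1$, and $\varphi_{s(\rho)}(A)=\big(\bm1,U_\rho^\ast\pi_{P_0^{\mrm S}}(A)U_\rho\bm1\big)$.

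It then remains to identify $\mrm{Ad}(U_\rho)$ with a Bogoliubov automorphism. Using $[h_n,a_z^\ast]=a_{z-n}^\ast$ and $[h_n,a_z]=-a_{z+n}$, the derivation $\mrm{ad}(Q-Q^\ast)$ sends each creation operator to a linear combination of creation operators, and likewise for annihilation operators; hence it preserves the one-particle space and integrates to a gauge-invariant $V_\rho\in\mcal I(\mcal K,\Gamma)$ with $U_\rho\pi_{P_0^{\mrm S}}(B(f))U_\rho^\ast=\pi_{P_0^{\mrm S}}(B(V_\rho f))$. Consequently $\varphi_{s(\rho)}=\varphi_{P_0^{\mrm S}}\circ\alpha_{V_\rho^{-1}}=\varphi_{V_\rho P_0^{\mrm S}V_\rho^{-1}}$, which is a quasi-free state (in fact a DPP associated with a projection, since $P_0^{\mrm S}$ and $V_\rho$ are gauge-invariant) by Lemma~\ref{lem:quasi-free_covariance}.

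The hard part will be making this last step rigorous. The currents $h_{\pm n}$ are only densely defined infinite bilinear sums whose central extension is precisely the anomaly $[h_m,h_n]=m\delta_{m+n,0}$, so the passage ``bilinear $\Rightarrow$ Bogoliubov'' cannot be taken formally but must go through the Shale--Stinespring framework of Theorem~\ref{thm:implementability}. I would control this by observing that $\rho\in\mbb T^{\circ}$ forces $p_n(\rho)\to0$ geometrically (because $\alpha_1,\beta_1<1$), which makes the generator $Q-Q^\ast$ and the difference $P_0^{\mrm S}-V_\rho^\ast P_0^{\mrm S}V_\rho$ of Hilbert--Schmidt class; Theorem~\ref{thm:implementability} then guarantees that $V_\rho$ is unitarily implementable and that its implementer agrees, up to a phase, with the $U_\rho$ constructed above, closing the argument. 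A self-contained alternative that avoids implementability altogether is to verify Wick's formula directly, by commuting $\Xi_+(\rho)$ through a string $B(f_1)\cdots B(f_{2n})$ via the relations $[h_n,B(f)]$ linear in the $B$'s, but this route is considerably more computational.
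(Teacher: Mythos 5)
Your first half (the integral identity) is correct: diagonality of $\pi_{P_0^{\mrm{S}}}(F)$ in the basis $\{e_{\omega}\}_{\omega\in\Omega^{\mrm{S}}}$ together with the expansion \eqref{eq:generating_Schur} and Parseval gives exactly $\varphi_{s(\rho)}(F)=\int_{\Omega}F\,{\rm d}M_{s(\rho)}$, and this is the same mechanism the paper uses. Your second half, however, departs genuinely from the paper's proof: you unitarize the vertex operator, setting $U_{\rho}={\rm e}^{Q-Q^{\ast}}$, and try to exhibit $\varphi_{s(\rho)}$ as $\varphi_{P_0^{\mrm{S}}}\circ\alpha_{V_{\rho}^{-1}}=\varphi_{V_{\rho}P_0^{\mrm{S}}V_{\rho}^{-1}}$ via Lemma~\ref{lem:quasi-free_covariance}. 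Your intermediate computations are right: $[Q,Q^{\ast}]=-\sum_{n}p_{n}(\rho)^{2}/n$ is central, BCH gives $U_{\rho}\bm{1}=\Xi_{-}(\rho)\bm{1}/\|\Xi_{-}(\rho)\bm{1}\|$, the derivation $\mrm{ad}(Q-Q^{\ast})$ acts on the one-particle space by the bounded skew-Hermitian operator $\sum_{m}\frac{p_{m}(\rho)}{m}\big(S^{m}-S^{-m}\big)$ ($S$ the shift), and $P_0^{\mrm{S}}-V_{\rho}^{\ast}P_0^{\mrm{S}}V_{\rho}$ is indeed Hilbert--Schmidt thanks to the geometric decay of $p_{n}(\rho)$ on $\mbb{T}^{\circ}$. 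The paper avoids all of this machinery: it uses only that $\Xi_{\pm}(\rho)$ are invertible on the dense domain $\mcal{D}^{\mrm{S}}$, rewrites $\varphi_{s(\rho)}(A)=\big(\bm{1},\Xi_{+}(\rho)\Xi_{-}(\rho)^{-1}\pi_{P_0^{\mrm{S}}}(A)\Xi_{-}(\rho)\Xi_{+}(\rho)^{-1}\bm{1}\big)$, and observes that this (non-unitary) conjugation is again a representation carrying field operators to field operators, so Wick's formula for the Fock vacuum transports to $\varphi_{s(\rho)}$ directly, with no implementability input.

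The genuine gap is in how you propose to close your ``hard part.'' Theorem~\ref{thm:implementability} yields the \emph{existence} of a unitary $\mcal{U}$ implementing $\alpha_{V_{\rho}}$ on $\big(\pi_{P_0^{\mrm{S}}},\mcal{F}(P_0^{\mrm{S}}\mcal{K})\big)$; it does not, and cannot, tell you that $\mcal{U}$ agrees up to phase with your $U_{\rho}={\rm e}^{Q-Q^{\ast}}$. That identification is logically equivalent to the intertwining relation $U_{\rho}\,\pi_{P_0^{\mrm{S}}}(B(f))\,U_{\rho}^{\ast}=\pi_{P_0^{\mrm{S}}}(B(V_{\rho}f))$, which is precisely the ``bilinear $\Rightarrow$ Bogoliubov'' integration step you flagged as formal --- so deferring it to Shale--Stinespring is circular. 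Without it you only know that the vector state attached to $\mcal{U}\bm{1}$ is quasi-free, not that the state attached to $\Xi_{-}(\rho)\bm{1}$ is. (A side error of the same kind: the generator $Q-Q^{\ast}$ cannot be ``of Hilbert--Schmidt class''; it is unbounded, and what is Hilbert--Schmidt is the off-diagonal block of its one-particle symbol relative to the polarization $P_0^{\mrm{S}}$, equivalently $P_0^{\mrm{S}}-V_{\rho}^{\ast}P_0^{\mrm{S}}V_{\rho}$.) To complete your route you must prove the intertwining itself, e.g., by a uniqueness-of-ODE/Duhamel argument for $t\mapsto {\rm e}^{t(Q-Q^{\ast})}\pi_{P_0^{\mrm{S}}}(B(f)){\rm e}^{-t(Q-Q^{\ast})}$ on the analytic domain $\mcal{D}^{\mrm{S}}$, or by showing that $\Xi_{-}(\rho)\bm{1}$ is annihilated by every $\pi_{P_0^{\mrm{S}}}(B(g))$ with $g\in V_{\rho}\overline{P_0^{\mrm{S}}}\mcal{K}$, which characterizes the rotated Fock vacuum up to phase. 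Your ``self-contained alternative'' (commuting $\Xi_{+}(\rho)$ through strings of field operators) is in substance what the paper does, and is the shorter way to finish.
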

\begin{proof}When we set
\begin{equation*}
	\widetilde{\Xi}_{\pm}(\rho):=\exp\left(-\sum_{n=1}^{\infty}\frac{p_{n}(\rho)}{n}h_{\pm n}\right),
\end{equation*}
it is immediate that they are defined on a dense domain $\mcal{D}^{\mrm{S}}$. Moreover, we have $\Xi_{\pm}(\rho)\widetilde{\Xi}_{\pm}(\rho)=\widetilde{\Xi}_{\pm}(\rho)\Xi_{\pm}(\rho)=1$ on $\mcal{D}^{\mrm{S}}$. Hence, it is verified that $\widetilde{\Xi}_{\pm}(\rho)=\Xi_{\pm}(\rho)^{-1}$.
This properties allows us to have
\begin{equation*}
	\varphi_{s(\rho)}(A)=\big(\bm{1},\pi_{P_{0}^{\mrm{S}}(\rho)}(A)\bm{1}\big)_{\mcal{F}(P^{\mrm{S}}_{0}\mcal{K})},\qquad A\in \mcal{C}(\mcal{K},\Gamma),
\end{equation*}
where we set
\begin{equation*}
	\pi_{P_{0}^{\mrm{S}}(\rho)}(A)=\Xi_{+}(\rho)\Xi_{-}(\rho)^{-1}\pi_{P^{\mrm{S}}_{0}}(A)\Xi_{-}(\rho)\Xi_{+}(\rho)^{-1},\qquad A\in \mcal{C}(\mcal{K},\Gamma).
\end{equation*}
It is obvious that the assignment $A\mapsto \pi_{P^{\mrm{S}}(\rho)}(A)$ gives a representation. Therefore, computation of $\varphi_{s(\rho)}$ admits Wick's formula, implying that it is a quasi-free state.
\end{proof}

\subsubsection{Proof of Theorem \ref{thm:perfectness_Schur}}
The property (\ref{eq:generating_Schur}) implies that
\begin{equation*}
	\frac{1}{\|\Xi_{-}(\rho)\bm{1}\|}\Xi_{-}(\rho)\bm{1}=\sum_{\omega\in \Omega^{\mrm{S}}}M_{s(\rho)}(\omega)^{1/2}e_{\omega},
\end{equation*}
where the coefficients are all non-negative; recall that this was a prominent observation in the proof of Proposition \ref{prop:perfectness_warmup}.

Now, it suffices to show that the embedding defined by
\begin{equation*}
	\iota \colon \ L^{2}(\Omega,M_{s(\rho)})\hookrightarrow \mcal{F}(P^{\mrm{S}}_{0}\mcal{K});\qquad \delta_{\omega}\mapsto M_{s(\rho)}(\omega)^{1/2}e_{\omega}
\end{equation*}
intertwines the representations $\mcal{T}^{M_{s(\rho)}}$ and $\pi_{P^{\mrm{S}}_{0}}$, which can be checked in the same manner as in the proof of Proposition \ref{prop:perfectness_warmup}.

\section{Conditional measures}\label{sect:conditional_measures}
\subsection{Proof of Lemma \ref{lem:conditional_projection}}
First, we notice that the subspace $P\mcal{K}_{X,\pr{X}}$ consists of vectors $v\in \mcal{K}_{\mfrak{X}\backslash(X\sqcup\pr{X})}$ such that there exist $a\in \mcal{K}_{X}^{+}$, $\pr{a}\in \mcal{K}_{\pr{X}}^{-}$ and $v+a+\pr{a}\in P\mcal{K}$. Due to the property $\Gamma \mcal{K}_{X}^{\pm}=\mcal{K}_{X}^{\mp}$, we can see that
\begin{equation*}
	\Gamma_{\mfrak{X}\backslash(X\sqcup\pr{X})} P\mcal{K}_{X,\pr{X}}=\big(\overline{P}\mcal{K}+\big(\mcal{K}_{X}^{-}\oplus \mcal{K}_{\pr{X}}^{+}\big)\big)\cap \mcal{K}_{\mfrak{X}\backslash (X\cup \pr{X})}=\overline{P}\mcal{K}_{\pr{X},X}.
\end{equation*}
We can also see that $ (P\mcal{K}_{X,\pr{X}})^{\perp}=\overline{P}\mcal{K}_{\pr{X},X}$. In fact, for $u\in P\mcal{K}_{X,\pr{X}}$ and $v\in \overline{P}\mcal{K}_{\pr{X},X}$, we can take $a\in \mcal{K}_{X}^{+}$, $\pr{a}\in \mcal{K}_{\pr{X}}^{-}$, $b\in \mcal{K}_{X}^{-}$, and $\pr{b}\in \mcal{K}_{\pr{X}}^{+}$ such that
\begin{gather*}
	u+a+\pr{a} \in P\mcal{K}, \qquad v+b+\pr{b}\in \overline{P}\mcal{K}.
\end{gather*}
Now, since $a$, $\pr{a}$, $b$, $\pr{b}$ are mutually orthogonal and orthogonal to $u$ and $v$, we have
\begin{equation*}
	(u,v)_{\mcal{K}_{\mfrak{X}\backslash (X\cup\pr{X})}}=(u+a+\pr{a},v+b+\pr{b})_{\mcal{K}}=0,
\end{equation*}
which implies $(P\mcal{K}_{X,\pr{X}})^{\perp}=\overline{P}\mcal{K}_{\pr{X},X}$.
Therefore, $\overline{P}_{X,\pr{X}}=1-P_{X,\pr{X}}$ holds.

\subsection{Conditioning on the CAR algebra}\label{subsect:conditioning_CAR_alg}
For finite disjoint subsets $X, \pr{X}\subset\mfrak{X}$, the characteristic function on the corresponding cylinder set $C(X,\pr{X})$ is identified as
\begin{equation*}
	\chi_{C(X,\pr{X})}=\prod_{x\in X}a_{x}^{\ast}a_{x} \prod_{x\in \pr{X}} a_{x}a_{x}^{\ast}\in C(\Omega)\subset \mcal{C}(\mcal{K},\Gamma).
\end{equation*}
In general, given a state $\varphi$ over $\mcal{C}(\mcal{K},\Gamma)$ its conditioning over $C(X,\pr{X})$ is a state defined by (see, e.g.,~\cite{RedeiSummers2006})
\begin{equation*}
	\varphi (A|\chi_{C(X,\pr{X})})=\frac{\varphi (\chi_{C(X,\pr{X})}A\chi_{C(X,\pr{X})})}{\varphi (\chi_{C(X,\pr{X})})},\qquad A\in \mcal{C}(\mcal{K},\Gamma).
\end{equation*}
According to the decomposition $\mfrak{X}=X\sqcup \pr{X}\sqcup \mfrak{X}\backslash (X\sqcup\pr{X})$, we have a decomposition
\begin{equation*}
	\mcal{C}(\mcal{K},\Gamma)=\mcal{C}(\mcal{K}_{X\sqcup \pr{X}},\Gamma_{X\sqcup\pr{X}})\otimes \mcal{C}(\mcal{K}_{\mfrak{X}\backslash (X\sqcup\pr{X})}),
\end{equation*}
which enables us to regard $C(\mcal{K}_{\mfrak{X}\backslash (X\sqcup\pr{X})},\Gamma_{\mfrak{X}\backslash (X\sqcup\pr{X})})$ as a subalgebra of $\mcal{C}(\mcal{K},\Gamma)$ that coincides with the subalgebra realized as $\chi_{C(X,\pr{X})}\mcal{C}(\mcal{K},\Gamma)\chi_{C(X,\pr{X})}$.
Therefore, the conditional state $\varphi (\cdot|\chi_{C(X,\pr{X})})$ is regarded as a state over $C(\mcal{K}_{\mfrak{X}\backslash (X\sqcup\pr{X})},\Gamma_{\mfrak{X}\backslash (X\sqcup\pr{X})})$ and computed as
\begin{equation*}
	\varphi (A|\chi_{C(X,\pr{X})})=\frac{\varphi (A\chi_{C(X,\pr{X})})}{\varphi (\chi_{C(X,\pr{X})})},\qquad A\in C(\mcal{K}_{\mfrak{X}\backslash (X\sqcup\pr{X})},\Gamma_{\mfrak{X}\backslash (X\sqcup\pr{X})}).
\end{equation*}
In particular, in the case when $\varphi=\varphi_{S}$ is a quasi-free state associated with $S\in \mcal{Q}(\mcal{K},\Gamma)$, we have
\begin{equation*}
	\varphi_{S}(F|\chi_{C(X,\pr{X})})=\int_{\Omega (\mfrak{X}\backslash (X\sqcup\pr{X}))}F(\omega)M^{S}_{X,\pr{X}}({\rm d}\omega),\qquad F\in C(\Omega (\mfrak{X}\backslash (X\sqcup\pr{X}))).
\end{equation*}

\subsection{Some observations}
We see that obtaining $P_{X,\pr{X}}$ from a projection operator $P\in\mrm{Gr}(\mcal{K},\Gamma)$ is decomposed into fundamental steps.
\begin{Lemma}
\label{lem:rest_decomp} We have the following decomposition properties.
\begin{enumerate}\itemsep=0pt
\item[$1.$] 	When $X, \pr{X}\subset\mfrak{X}$ are disjoint finite subsets, we have
		\begin{equation}		\label{eq:rest_proj_decomp}
			P_{X,\pr{X}}=(P_{\varnothing,\pr{X}})_{X,\varnothing}=(P_{X,\varnothing})_{\varnothing, \pr{X}}.
		\end{equation}
\item[$2.$] 	When $X=X_{1}\sqcup X_{2}\subset \mfrak{X}$ is a disjoint union of finite subsets, then
		\begin{equation*}
			P_{X,\varnothing}=(P_{X_{1},\varnothing})_{X_{2},\varnothing}.
		\end{equation*}
\item[$3.$] 	When $\pr{X}=\pr{X}_{1}\sqcup \pr{X}_{2}\subset\mfrak{X}$ is a disjoint union of finite subsets, then
		\begin{equation*}
			P_{\varnothing,\pr{X}}=(P_{\varnothing,\pr{X}_{1}})_{\varnothing,\pr{X}_{2}}.
		\end{equation*}
\end{enumerate}
\end{Lemma}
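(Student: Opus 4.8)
The plan is to reduce all three identities to a single elementary identity about subspaces. Since in each case both sides are orthogonal projections onto closed subspaces of the \emph{same} ambient space, it suffices to prove equality of their ranges. Writing out the definitions, every range that appears has the shape $(W'\cap L+B)\cap M$ for suitable subspaces, where $W'$ collects the image $P\mcal{K}$ together with whichever of $\mcal{K}_X^+$, $\mcal{K}_{\pr{X}}^-$ has already been adjoined at the inner conditioning step, $L$ is the ambient space of the inner projection, $B$ is the subspace adjoined at the outer step, and $M$ is the final ambient space.

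First I would isolate the following purely algebraic lemma: if $W'$, $B$, $L$, $M$ are subspaces of a vector space with $B\subseteq L$ and $M\subseteq L$, then
\begin{equation*}
	\big((W'\cap L)+B\big)\cap M=(W'+B)\cap M.
\end{equation*}
The inclusion $\subseteq$ is immediate since $W'\cap L\subseteq W'$. For $\supseteq$, take $v\in(W'+B)\cap M$ and write $v=w'+b$ with $w'\in W'$ and $b\in B$; because $v\in M\subseteq L$ and $b\in B\subseteq L$, we get $w'=v-b\in W'\cap L$, whence $v\in(W'\cap L)+B$. This short argument is the entire mechanism behind the lemma.

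Then I would apply this identity three times. For part~(1), the first equality takes $W'=P\mcal{K}+\mcal{K}_{\pr{X}}^-$, $B=\mcal{K}_X^+$, $L=\mcal{K}_{\mfrak{X}\backslash\pr{X}}$, and $M=\mcal{K}_{\mfrak{X}\backslash(X\sqcup\pr{X})}$; the disjointness $X\cap\pr{X}=\varnothing$ gives $\mcal{K}_X^+\subseteq L$, and $M\subseteq L$ is clear, so the hypotheses hold and the lemma yields the range of $(P_{\varnothing,\pr{X}})_{X,\varnothing}$ equals that of $P_{X,\pr{X}}$. The second equality of~(1) is the symmetric choice $W'=P\mcal{K}+\mcal{K}_X^+$, $B=\mcal{K}_{\pr{X}}^-$, $L=\mcal{K}_{\mfrak{X}\backslash X}$, $M=\mcal{K}_{\mfrak{X}\backslash(X\sqcup\pr{X})}$. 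For~(2) I would take $W'=P\mcal{K}+\mcal{K}_{X_1}^+$, $B=\mcal{K}_{X_2}^+$, $L=\mcal{K}_{\mfrak{X}\backslash X_1}$, $M=\mcal{K}_{\mfrak{X}\backslash X}$, using $X_1\cap X_2=\varnothing$ to get $B\subseteq L$ and the decomposition $\mcal{K}_{X}^+=\mcal{K}_{X_1}^+\oplus\mcal{K}_{X_2}^+$; part~(3) is verbatim the same with the $-$ components replacing the $+$ components.

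The only genuine point to watch — rather than a real obstacle — is well-definedness of the iterated projections: I must confirm that each intermediate range (such as $P\mcal{K}_{\varnothing,\pr{X}}$) is a closed subspace living in the correct relative one-particle space, so that the outer conditioning operation is legitimate and both sides are projections in a common ambient space. This is guaranteed because adjoining the finite-dimensional space $\mcal{K}_X^+$ or $\mcal{K}_{\pr{X}}^-$ to a closed subspace keeps it closed, and because Lemma~\ref{lem:conditional_projection}, applied with $\mfrak{X}$ replaced by the relevant complement, ensures each intermediate $P_{\bullet,\bullet}$ again lies in the appropriate $\mrm{Gr}$. With these bookkeeping checks in hand, the three range identities give the stated equalities of projection operators.
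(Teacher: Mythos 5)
Your proof is correct and follows essentially the same route as the paper's: both establish the identities by showing that the ranges of the two orthogonal projections coincide, via the same element chase (writing an element of $(W'+B)\cap M$ as $w'+b$ and using that both it and $b$ lie in the larger relative space $L$ to move $b$ across the intersection), which is exactly the quantifier manipulation the paper performs inline for part~1 before declaring parts~2 and~3 analogous. The only difference is organizational: you isolate that chase as a reusable modular-law lemma, apply it uniformly to all three parts, and make explicit the closure and well-definedness bookkeeping (finite-dimensional sums stay closed, intermediate projections lie in the appropriate $\mrm{Gr}$ by Lemma~\ref{lem:conditional_projection}) that the paper leaves implicit.
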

\begin{proof}We only prove (\ref{eq:rest_proj_decomp}) since the other two follow from arguments of the same type.
We have noticed that $P\mcal{K}$ consists of vectors $u\in \mcal{K}_{\mfrak{X}\backslash (X\sqcup\pr{X})}$ such that there exist $a\in \mcal{K}_{X}^{+}$, $\pr{a}\in \mcal{K}_{\pr{X}}^{-}$ and $u+a+\pr{a}\in P\mcal{K}$. This exactly means that
\begin{equation*}
	u+a\in \{v\in \mcal{K}_{\mfrak{X}\backslash \pr{X}}\,|\, \exists\, \pr{a}\in \mcal{K}_{\pr{X}},\, v+\pr{a}\in P\mcal{K}\},
\end{equation*}
while the latter set is just $P\mcal{K}_{\varnothing,\pr{X}}$. Therefore, $P\mcal{K}_{X,\pr{X}}=(P\mcal{K}_{\varnothing,\pr{X}})_{X,\varnothing}$. The other relation $P\mcal{K}_{X,\pr{X}}=(P\mcal{K}_{X,\varnothing})_{\varnothing,\pr{X}}$ is also shown. Consequently, we reach~(\ref{eq:rest_proj_decomp}).
\end{proof}

We also notice that the regularity is inherited along decomposition.
\begin{Lemma}\label{lem:regularity_decomp} We have the following properties regarding the regularity.
\begin{enumerate}\itemsep=0pt
\item[$1.$] 	For disjoint finite subsets $X,\pr{X}\subset\mfrak{X}$, a projection operator $P\in\mrm{Gr}(\mcal{K},\Gamma)$ is $(X,\pr{X})$-regular if and only if one of the following equivalent conditions holds:
		\begin{enumerate}\itemsep=0pt
		\item[$(a)$] 	$P$ is $(X,\varnothing)$-regular and $P_{X,\varnothing}$ is $(\varnothing,\pr{X})$-regular.
		\item[$(b)$] 	$P$ is $(\varnothing,\pr{X})$-regular and $P_{\varnothing,\pr{X}}$ is $(X,\varnothing)$-regular.
		\end{enumerate}
\item[$2.$] 	When $X=X_{1}\sqcup X_{2}$ is a disjoint union of finite subsets, a projection operator $P\in\mrm{Gr}(\mcal{K},\Gamma)$ is $(X,\varnothing)$-regular if and only if $P$ is $(X_{1},\varnothing)$-regular and $P_{X_{1},\varnothing}$ is $(X_{2},\varnothing)$-regular.
\item[$3.$] 	When $\pr{X}=\pr{X}_{1}\sqcup \pr{X}_{2}$ is a disjoint union of finite subsets, a projection operator $P\in\mrm{Gr}(\mcal{K},\Gamma)$ is $(\varnothing,\pr{X})$-regular if and only if $P$ is $(\varnothing,\pr{X}_{1})$-regular and $P_{\varnothing,\pr{X}_{1}}$ is $(\varnothing,X_{2})$-regular.
\end{enumerate}
\end{Lemma}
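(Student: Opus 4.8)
The plan is to observe that all three parts of Lemma~\ref{lem:regularity_decomp} are instances of a single exchange computation, which I would carry out once and then transcribe to the remaining cases. The essential input is the description of the reduced subspaces recorded in the proof of Lemma~\ref{lem:rest_decomp}: $P\mcal{K}_{X_{1},\varnothing}$ is exactly the set of $v\in\mcal{K}_{\mfrak{X}\backslash X_{1}}$ for which there exists $a\in\mcal{K}_{X_{1}}^{+}$ with $v+a\in P\mcal{K}$, and symmetrically $P\mcal{K}_{\varnothing,\pr{X}_{1}}$ is the set of $v\in\mcal{K}_{\mfrak{X}\backslash\pr{X}_{1}}$ for which there exists $a\in\mcal{K}_{\pr{X}_{1}}^{-}$ with $v+a\in P\mcal{K}$. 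Throughout I would use that filling subspaces attached to disjoint index blocks, as well as the $+$ and $-$ components, are mutually orthogonal.

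I would first treat part~2. For the forward direction, assuming $(X,\varnothing)$-regularity with $X=X_{1}\sqcup X_{2}$, the inclusion $\mcal{K}_{X_{1}}^{+}\subset\mcal{K}_{X}^{+}$ gives $(X_{1},\varnothing)$-regularity at once; and if $u\in P\mcal{K}_{X_{1},\varnothing}\cap\mcal{K}_{X_{2}}^{+}$, then by the description above there is $a\in\mcal{K}_{X_{1}}^{+}$ with $u+a\in P\mcal{K}\cap\mcal{K}_{X}^{+}=\{0\}$, whence $u=-a$ forces $u=a=0$ by orthogonality of $\mcal{K}_{X_{1}}^{+}$ and $\mcal{K}_{X_{2}}^{+}$; thus $P_{X_{1},\varnothing}$ is $(X_{2},\varnothing)$-regular. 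For the converse, given $w\in P\mcal{K}\cap\mcal{K}_{X}^{+}$, I would split $w=a_{1}+a_{2}$ with $a_{i}\in\mcal{K}_{X_{i}}^{+}$, observe $a_{2}=w-a_{1}\in(P\mcal{K}+\mcal{K}_{X_{1}}^{+})\cap\mcal{K}_{\mfrak{X}\backslash X_{1}}=P\mcal{K}_{X_{1},\varnothing}$, so that $a_{2}\in P\mcal{K}_{X_{1},\varnothing}\cap\mcal{K}_{X_{2}}^{+}=\{0\}$, and then $w=a_{1}\in P\mcal{K}\cap\mcal{K}_{X_{1}}^{+}=\{0\}$.

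Part~3 is the same computation verbatim with every superscript $+$ replaced by $-$ and each $X_{i}$ replaced by $\pr{X}_{i}$; alternatively it follows from part~2 via the $\Gamma$-duality established in the proof of Lemma~\ref{lem:conditional_projection}, namely $\overline{P}=1-P$, $\Gamma\mcal{K}_{A}^{\pm}=\mcal{K}_{A}^{\mp}$ and $\Gamma_{\mfrak{X}\backslash\pr{X}_{1}}P\mcal{K}_{\varnothing,\pr{X}_{1}}=\overline{P}\mcal{K}_{\pr{X}_{1},\varnothing}$, which convert $(\varnothing,\pr{X})$-regularity of $P$ into $(\pr{X},\varnothing)$-regularity of $\overline{P}$. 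For part~1 I would run the identical exchange argument but now filling from $\mcal{K}_{X}^{+}$ and emptying from $\mcal{K}_{\pr{X}}^{-}$: assuming $(X,\pr{X})$-regularity, $\mcal{K}_{X}^{+}\subset\mcal{K}_{X}^{+}\oplus\mcal{K}_{\pr{X}}^{-}$ yields $(X,\varnothing)$-regularity, and any $u\in P\mcal{K}_{X,\varnothing}\cap\mcal{K}_{\pr{X}}^{-}$ lifts to $u+a\in P\mcal{K}\cap(\mcal{K}_{X}^{+}\oplus\mcal{K}_{\pr{X}}^{-})=\{0\}$ with $a\in\mcal{K}_{X}^{+}$, forcing $u=a=0$ because $\mcal{K}_{X}^{+}$ and $\mcal{K}_{\pr{X}}^{-}$ are orthogonal (here it matters that $X\cap\pr{X}=\varnothing$ and that $+$ and $-$ components are orthogonal); this is condition~(a). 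The converse splits $w\in P\mcal{K}\cap(\mcal{K}_{X}^{+}\oplus\mcal{K}_{\pr{X}}^{-})$ as $w=a+\pr{a}$ and pushes $\pr{a}$ into $P\mcal{K}_{X,\varnothing}\cap\mcal{K}_{\pr{X}}^{-}=\{0\}$, exactly as before. Condition~(b) is established by the mirror argument with the roles of $X$ and $\pr{X}$ interchanged, so (a) and (b) are both equivalent to $(X,\pr{X})$-regularity and hence to each other.

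I do not expect a genuine obstacle here: every subspace in sight is finite-dimensional, so there are no closure or density issues, and the arguments are purely linear-algebraic exchange manipulations. The only point requiring care is the bookkeeping that keeps the reduced projection in the correct ambient space---one must use $X_{2},\pr{X}\subset\mfrak{X}\backslash X_{1}$ (respectively $\mfrak{X}\backslash X$) so that the secondary regularity conditions are even well-posed, and one must invoke the precise orthogonality among the filling blocks at the step $u=-a\Rightarrow u=a=0$. Matching each of the three stated equivalences to the single exchange computation, while tracking these index sets, is the whole content of the proof.
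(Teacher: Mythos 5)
Your proof is correct and takes essentially the same route as the paper: both rest on the description of $P\mcal{K}_{X,\varnothing}$ as the set of $u\in\mcal{K}_{\mfrak{X}\backslash X}$ admitting $a\in\mcal{K}_{X}^{+}$ with $u+a\in P\mcal{K}$, followed by the same lift-and-split exchange argument with orthogonality of the filling blocks. The only differences are presentational --- the paper proves just item (1)(a) (by contradiction in the converse) and declares the other cases similar, whereas you write out all parts directly and add an optional $\Gamma$-duality shortcut for part 3 --- so the two proofs coincide in substance.
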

\begin{proof}
We only prove the equivalence to the item (a) in (1) since the others are shown in similar arguments.
Suppose that $P\in \mrm{Gr}(\mcal{K},\Gamma)$ is $(X,\pr{X})$-regular. Then, it is obvious that $P$ is $(X,\varnothing)$-regular. Now, $P\mcal{K}_{X,\varnothing}$ consists of vectors $u\in\mcal{K}_{\mfrak{X}\backslash X}$ such that there exists $a\in \mcal{K}_{X}^{+}$ and $u+a\in P\mcal{K}$. Let us take a vector $u_{0}\in P\mcal{K}_{X,\varnothing}\cap \mcal{K}_{\pr{X}}^{-}$. Then, there exists $a_{0}\in \mcal{K}_{X}^{+}$ such that $u_{0}+a_{0}\in P\mcal{K}$, while we also have $u_{0}+a_{0}\in \mcal{K}_{X}^{+}\oplus \mcal{K}_{\pr{X}}^{-}$, which implies that $u_{0}+a_{0}\in P\mcal{K}\cap \mcal{K}_{X}^{+}\oplus \mcal{K}_{\pr{X}}^{-}=\{0\}$ by assumption. Therefore, we must have $u_{0}=0$ and see that $P\mcal{K}_{X,\varnothing}$ is $(\varnothing,\pr{X})$-regular.

Conversely, let us suppose that $P$ is $(X,\varnothing)$-regular and $P_{X,\varnothing}$ is $(\varnothing,\pr{X})$-regular. Take a~nonzero vector $u_{1}\in P\mcal{K}\cap \mcal{K}_{X}^{+}\oplus \mcal{K}_{\pr{X}}^{-}$. Since $P$ is $(X,\varnothing)$-regular, when we write $u_{1}=a_{1}+\pr{a}_{1}$, $a_{1}\in \mcal{K}_{X}^{+}$, $\pr{a}\in \mcal{K}_{\pr{X}}^{-}$, we must have $\pr{a}_{1}\neq 0$. On the other hand, by definition, $\pr{a}_{1}\in P\mcal{K}_{X,\varnothing}$, which implies that $\pr{a}_{1}\in P\mcal{K}_{X,\varnothing}\cap \mcal{K}_{\pr{X}}^{-}$ contradicting the assumption that $P_{X,\varnothing}$ is $(\varnothing,\pr{X})$-regular. Therefore, $P$ is $(X,\pr{X})$-regular.
\end{proof}

Lemmas \ref{lem:rest_decomp} and \ref{lem:regularity_decomp} verify that it suffices to prove Theorem \ref{thm:conditional_projection} in the case when $(X,\pr{X})=(\{x\},\varnothing)$ and $(\varnothing,\{x\})$ with $x\in\mfrak{X}$.
Let us introduce subspaces
\begin{gather*}
	\mcal{R}_{\pm}(x;\mcal{K}):=\big\{u\in \mcal{K}\,|\, \big(\mcal{K}^{\pm}_{\{x\}},u\big)_{\mcal{K}}=0\big\},\qquad x\in \mfrak{X},
\end{gather*}
and set $\mcal{R}_{\pm}(x;P\mcal{K})=P\mcal{K}\cap \mcal{R}_{\pm}(x;\mcal{K})$.
When we write an element $u\in P\mcal{K}$ as $u=(f,g)$ according to the direct sum decomposition $\mcal{K}=\ell^{2}(\mfrak{X})\oplus \ell^{2}(\mfrak{X})$, these are just
\begin{gather*}
	\mcal{R}_{+}(x;P\mcal{K}) =\{(f,g)\in P\mcal{K}\,|\,f(x)=0\}, \qquad \mcal{R}_{-}(x;P\mcal{K}) =\{(f,g)\in P\mcal{K}\,|\,g(x)=0\}.
\end{gather*}
Then, we can see that $P\mcal{K}_{\{x\},\varnothing}$ is the image of $\mcal{R}_{-}(x;P\mcal{K})$ under the orthogonal projection onto~$\mcal{K}_{\mfrak{X}\backslash\{x\}}$, and similarly, $P\mcal{K}_{\varnothing,\{x\}}$ is the image of $\mcal{R}_{+}(x;P\mcal{K})$ under the orthogonal projection onto~$\mcal{K}_{\mfrak{X}\backslash\{x\}}$.

\subsection[Computation of P(x,0) and P(0,x)]{Computation of $\boldsymbol{P_{\{x\},\varnothing}}$ and $\boldsymbol{P_{\varnothing,\{x\}}}$}

We introduce a lemma found in \cite[Section~7]{BufetovOlshanski2019} that plays prominent roles in computation of~$P_{\{x\},\varnothing}$ and~$P_{\varnothing,\{x\}}$.
\begin{Lemma}[\cite{BufetovOlshanski2019}]\label{lem:projection_reduction_formula}
Let $\mcal{H}$ be a Hilbert space, $\mcal{H}=\mcal{H}_{1}\oplus \mcal{H}_{2}$ be its direct sum decomposition such that $\dim \mcal{H}_{2}<\infty$ and write $\pi_{1}\colon \mcal{H}\to\mcal{H}_{1}$ for the canonical projection. Let $\mcal{L}\subset \mcal{H}$ be a~closed subspace and write the orthogonal projection $P$ onto $\mcal{L}$ in the form of
\begin{equation*}
	P=\left(
	\begin{matrix}
	A & B \\
	C & D
	\end{matrix}
	\right)
\end{equation*}
according to the direct sum decomposition $\mcal{H}=\mcal{H}_{1}\oplus \mcal{H}_{2}$, where $A\in \mfrak{B}(\mcal{H}_{1})$, $B\in \mfrak{B}(\mcal{H}_{2},\mcal{H}_{1})$, $C\in \mfrak{B}(\mcal{H}_{1},\mcal{H}_{2})$, and $D\in \mfrak{B}(\mcal{H}_{2})$.
\begin{enumerate}\itemsep=0pt
\item[$1.$] 	We define a subspace $\mcal{L}_{1}:=\mcal{L}\cap \mcal{H}_{1}$ of $\mcal{H}_{1}$ and write $P_{1}$ for the orthogonal projection onto~$\mcal{L}_{1}$ in~$\mcal{H}_{1}$. If~$D$ is invertible, i.e., $\mcal{L}^{\perp}\cap\mcal{H}_{2}=\{0\}$, we have $P_{1}=A-BD^{-1}C$.
\item[$2.$] 	We define a subspace $\widetilde{\mcal{L}}_{1}:=\pi_{1}(\mcal{L})$ of $\mcal{H}_{1}$ and write $\widetilde{P}_{1}$ for the orthogonal projection onto~$\widetilde{\mcal{L}}_{1}$ in $\mcal{H}_{1}$. If $1-D$ is invertible, i.e., $\mcal{L}\cap\mcal{H}_{2}=\{0\}$, we have $\widetilde{P}_{1}=A+B(1-D)^{-1}C$.
\end{enumerate}
\end{Lemma}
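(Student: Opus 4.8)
The plan is to work throughout with the relations coming from $P=P^{\ast}=P^{2}$. Spelling these out in block form gives $A=A^{\ast}$, $D=D^{\ast}$, $C=B^{\ast}$ together with the four quadratic identities $A^{2}+BC=A$, $AB+BD=B$, $CA+DC=C$, and $CB+D^{2}=D$. I would first record the subspace descriptions: $(u,0)\in\mcal{L}$ iff $Au=u$ and $Cu=0$, so $\mcal{L}_{1}=\{u\in\mcal{H}_{1}\,|\,Au=u,\ Cu=0\}$; while $(0,w)\in\mcal{L}$ iff $Bw=0$ and $(1-D)w=0$, and $(0,w)\in\mcal{L}^{\perp}$ iff $Bw=0$ and $Dw=0$. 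Since $P$ and $1-P$ are projections (hence $0\le P\le 1$) one checks $\mcal{L}^{\perp}\cap\mcal{H}_{2}=\ker D$ and $\mcal{L}\cap\mcal{H}_{2}=\ker(1-D)$, and because $\dim\mcal{H}_{2}<\infty$ injectivity equals invertibility; thus the two phrasings of each hypothesis agree.

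For part 1, I set $M=A-BD^{-1}C$, which is self-adjoint since $C=B^{\ast}$ and $D=D^{\ast}$. The core is to show $\operatorname{Ran}M\subseteq\mcal{L}_{1}$ and $M|_{\mcal{L}_{1}}=\mathrm{id}$. Using the four relations together with the elementary identities $(1-D)D^{-1}=D^{-1}-1$ and $D(1-D)D^{-1}=1-D$, a short manipulation gives $CM=0$ and $AM=M$; hence every $Mu$ satisfies $A(Mu)=Mu$ and $C(Mu)=0$, i.e.\ $Mu\in\mcal{L}_{1}$. Conversely, if $u\in\mcal{L}_{1}$ then $Cu=0$ and $Au=u$ force $Mu=u$. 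These two facts yield $M^{2}=M$ (because $Mu\in\mcal{L}_{1}$ and $M$ is the identity there) and $\operatorname{Ran}M=\mcal{L}_{1}$; being self-adjoint and idempotent, $M$ is exactly the orthogonal projection $P_{1}$ onto $\mcal{L}_{1}$.

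For part 2 I would avoid a direct computation and instead exploit a duality. The key observation is that, inside $\mcal{H}_{1}$, one has $(\pi_{1}(\mcal{L}))^{\perp}=\mcal{L}^{\perp}\cap\mcal{H}_{1}$, which follows from $(u,\pi_{1}\ell)_{\mcal{H}_{1}}=(u\oplus 0,\ell)_{\mcal{H}}$ for $u\in\mcal{H}_{1}$ and $\ell\in\mcal{L}$. Hence $\widetilde{P}_{1}=1-Q_{1}$, where $Q_{1}$ is the orthogonal projection onto $\mcal{L}^{\perp}\cap\mcal{H}_{1}$. Now $1-P$ is the orthogonal projection onto $\mcal{L}^{\perp}$, with blocks $1-A$, $-B$, $-C$, $1-D$, and its lower-right corner $1-D$ is invertible by hypothesis; applying part 1 to $1-P$ gives $Q_{1}=(1-A)-(-B)(1-D)^{-1}(-C)=(1-A)-B(1-D)^{-1}C$. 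Subtracting from $1$ produces $\widetilde{P}_{1}=A+B(1-D)^{-1}C$, as claimed.

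The routine computations $CM=0$ and $AM=M$ are the only place requiring care: one must use that $D$ commutes with $D^{-1}$ while not assuming that $A,B,C,D$ commute with one another. The genuinely structural step is the duality $(\pi_{1}(\mcal{L}))^{\perp}=\mcal{L}^{\perp}\cap\mcal{H}_{1}$, which is what lets part 2 be deduced from part 1 by passing to the complementary projection rather than repeating the block algebra.
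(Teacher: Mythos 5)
Your proof is correct, and it departs from the paper's in two instructive ways. For part 1 the paper also introduces $M=A-BD^{-1}C$, checks self-adjointness and idempotency from the block relations, and then identifies the range by a positivity argument: if $M\xi=\xi$ then $(\xi,A\xi)-\big(\xi,BD^{-1}C\xi\big)=(\xi,\xi)$, and since $A\le 1$ and $BD^{-1}C=C^{\ast}D^{-1}C\ge 0$ this forces $A\xi=\xi$ and $C\xi=0$. Your identities $CM=0$ and $AM=M$ (which do follow from the four quadratic relations, using $CA=(1-D)C$, $CB=D(1-D)$, $A^{2}=A-BC$, $AB=B(1-D)$) replace that positivity argument by pure algebra and deliver idempotency and the range description in one stroke --- a modest but genuine streamlining; you are also more careful than the paper in justifying the equivalences $\ker D=\mcal{L}^{\perp}\cap\mcal{H}_{2}$ and $\ker(1-D)=\mcal{L}\cap\mcal{H}_{2}$, which the statement simply asserts. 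The real divergence is part 2: the paper computes directly, solving $C\xi_{1}+D\xi_{2}=\xi_{2}$ for $\xi_{2}=(1-D)^{-1}C\xi_{1}$ and substituting, so that $\pi_{1}(\mcal{L})$ is exhibited as the fixed-point set of the self-adjoint idempotent $A+B(1-D)^{-1}C$; you instead deduce part 2 from part 1 applied to the complementary projection $1-P$ onto $\mcal{L}^{\perp}$, via the duality $(\pi_{1}(\mcal{L}))^{\perp}=\mcal{L}^{\perp}\cap\mcal{H}_{1}$. Your route is more economical and explains structurally why the part-2 formula is the ``complement'' of the part-1 formula. The one thing the paper's direct computation gives for free that your duality argument does not: realizing $\pi_{1}(\mcal{L})$ as a fixed-point set shows it is closed, whereas your $1-Q_{1}$ is a priori the projection onto $\big(\mcal{L}^{\perp}\cap\mcal{H}_{1}\big)^{\perp}=\overline{\pi_{1}(\mcal{L})}$. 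To match the statement exactly you should add one line: since $\mcal{L}\cap\mcal{H}_{2}=\{0\}$ and $\dim\mcal{H}_{2}<\infty$, the restriction $\pi_{1}|_{\mcal{L}}$ is bounded below (a sequence of unit vectors $\ell_{n}\in\mcal{L}$ with $\pi_{1}\ell_{n}\to 0$ would have a subsequence converging to a unit vector of $\mcal{L}\cap\mcal{H}_{2}$), so $\pi_{1}(\mcal{L})$ is closed. This is a one-line patch, not a gap in substance.
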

\begin{proof}
Since $P$ is an orthogonal projection, we have $A^{\ast}=A$, $B^{\ast}=C$, $D^{\ast}=D$ and
\begin{gather}\label{eq:relation_component_peojection}
	A^{2}+BC =A, \qquad AB+BD =B, \qquad CA+DC = C, \qquad CB+D^{2} =D.
\end{gather}

1.~Set $\pr{P}_{1}:=A-BD^{-1}C$, which is obviously self-adjoint and is also checked by means of~(\ref{eq:relation_component_peojection}) to be an idempotent.
		Therefore, $\pr{P}_{1}$ is the orthogonal projection onto a closed subspace $\pr{\mcal{L}}_{1}\subset\mcal{H}_{1}$.
		Suppose that $\xi\in\mcal{L}_{1}$. Then $(\xi,0)\in\mcal{H}_{1}\oplus \mcal{H}_{2}$ belongs to $\mcal{L}$ and fixed by $P$:
		\begin{equation*}
		\left(
		\begin{matrix}
		A & B \\
		C & D
		\end{matrix}
		\right)\left(
		\begin{matrix}
		\xi \\
		0
		\end{matrix}
		\right)=\left(
		\begin{matrix}
		\xi \\
		0
		\end{matrix}
		\right),
		\end{equation*}
		which is equivalent to $A\xi=\xi$ and $C\xi=0$.
		Therefore, $\xi$ is fixed by $\pr{P}_{1}$ implying that $\mcal{L}_{1}\subset \pr{\mcal{L}}_{1}$.
		Conversely, suppose that $\xi\in \pr{\mcal{L}}_{1}$. Then, we have $\big(A-BD^{-1}C\big)\xi=\xi$ implying that
		\begin{equation*}
			(\xi,A\xi)-\big(\xi,BD^{-1}C\xi\big)=(\xi,\xi).
		\end{equation*}
		Since $P$ is an orthogonal projection, we have $(\xi,A\xi)\le (\xi,\xi)$. Due to the property $B^{\ast}=C$, the operator $BD^{-1}C$ is a positive operator and $\big(\xi,BD^{-1}C\xi\big)\ge 0$. Therefore, we must have $A\xi=\xi$ and $C\xi=0$ implying $\pr{\mcal{L}}_{1}\subset \mcal{L}_{1}$.

2.~The operator $A+B(1-D)^{-1}C$ is self-adjoint and shown to be an idempotent.
		An element $\xi=(\xi_{1},\xi_{2})\in\mcal{H}_{1}\oplus\mcal{H}_{2}$ belongs to $\mcal{L}$ if and only if
		\begin{equation*}
			A\xi_{1}+B\xi_{2}=\xi_{1},\qquad C\xi_{1}+D\xi_{2}=\xi_{2}.
		\end{equation*}
		Since we have assumed that $1-D$ is invertible, the latter equation gives us $\xi_{2}=(1-D)^{-1}C\xi_{1}$, substitution of which into the former one gives $\big(A+B(1-D)^{-1}C\big)\xi_{1}=\xi_{1}$. Therefore, we obtain the desired result.
\end{proof}

Now, we can obtain explicit expressions of $P_{\{x\},\varnothing}$ and $P_{\varnothing,\{x\}}$.
Let us first write $P\in \mrm{Gr}(\mcal{K},\Gamma)$ as
\begin{equation*}
	P=\left(
	\begin{matrix}
	P_{11} & P_{12} \\
	P_{21} & P_{22}
	\end{matrix}
	\right)
\end{equation*}
according to the direct sum decomposition $\mcal{K}=\ell^{2}(\mfrak{X})\oplus \ell^{2}(\mfrak{X})$, and further write
\begin{equation*}
	P_{ij}=\left(
	\begin{matrix}
		a_{ij} & b_{ij} \\
		c_{ij} & d_{ij}
	\end{matrix}
	\right),\qquad i,j=1,2,
\end{equation*}
according to the direct sum decomposition $\ell^{2}(\mfrak{X})=\ell^{2}(\mfrak{X}\backslash\{x\})\oplus \ell^{2}(\{x\})$.

\begin{Proposition}\label{prop:expression_cond_proj}
Under the above notation, we have, if $P$ is $(\{x\},\varnothing)$-regular,
\begin{equation*}
	P_{\{x\},\varnothing}=\left(
	\begin{matrix}
		a_{11}-b_{12}d_{22}^{-1}c_{21}+b_{11}(1-d_{11})^{-1}c_{11} & a_{12}-b_{12}d_{22}^{-1}c_{22}+b_{11}(1-d_{11})^{-1}c_{12}\\
		a_{21}-b_{22}d_{22}^{-1}c_{21}+b_{21}(1-d_{11})^{-1}c_{11} & a_{22}-b_{22}d_{22}^{-1}c_{22}+b_{21}(1-d_{11})^{-1}c_{12}
	\end{matrix}
	\right)
\end{equation*}
and, if $P$ is $(\varnothing,\{x\})$-regular,
\begin{equation*}
	P_{\varnothing,\{x\}}=\left(
	\begin{matrix}
		a_{11}-b_{11}d_{11}^{-1}c_{11}+b_{12}(1-d_{22})^{-1}c_{21} & a_{12}-b_{11}d_{11}^{-1}c_{12}+b_{12}(1-d_{22})^{-1}c_{22}\\
		a_{21}-b_{21}d_{11}^{-1}c_{11}+b_{22}(1-d_{22})^{-1}c_{21} & a_{22}-b_{21}d_{11}^{-1}c_{12}+b_{22}(1-d_{22})^{-1}c_{22}
	\end{matrix}
	\right)
\end{equation*}
according to the direct sum decomposition $\mcal{K}_{\mfrak{X}\backslash\{x\}}=\mcal{K}^{+}_{\mfrak{X}\backslash\{x\}}\oplus \mcal{K}^{-}_{\mfrak{X}\backslash\{x\}}$.
\end{Proposition}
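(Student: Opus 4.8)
The plan is to realize each conditional projection through two successive applications of Lemma~\ref{lem:projection_reduction_formula}, following the geometric description recorded at the end of Section~\ref{sect:conditional_measures}: the subspace $P\mcal{K}_{\{x\},\varnothing}$ is the image of $\mcal{R}_{-}(x;P\mcal{K})=P\mcal{K}\cap\mcal{R}_{-}(x;\mcal{K})$ under the orthogonal projection onto $\mcal{K}_{\mfrak{X}\backslash\{x\}}$, while $P\mcal{K}_{\varnothing,\{x\}}$ is the image of $\mcal{R}_{+}(x;P\mcal{K})$. Since $\mcal{R}_{-}(x;\mcal{K})=\big(\mcal{K}^{-}_{\{x\}}\big)^{\perp}$ is a closed subspace of finite corank, and passing to the image amounts to projecting out $\mcal{K}^{+}_{\{x\}}$, each of these two operations is exactly of the form treated in Lemma~\ref{lem:projection_reduction_formula}, with the one-dimensional complement $\mcal{H}_{2}$ taken to be first $\mcal{K}^{-}_{\{x\}}$ (intersection, part~$1$) and then $\mcal{K}^{+}_{\{x\}}$ (image, part~$2$).

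Before assembling the formula I would record the simplification $d_{12}=d_{21}=0$. Indeed, self-adjointness of $P$ gives $P_{21}=P_{12}^{\ast}$, hence $d_{21}=\overline{d_{12}}$, while the defining relation $\overline{P}=1-P$ for $P\in\mrm{Gr}(\mcal{K},\Gamma)$ yields $JP_{12}J=-P_{21}$; taking the $(x,x)$-entry and using $Je_{x}=e_{x}$ gives $\overline{d_{12}}=-d_{21}$. The two identities together force $d_{12}=d_{21}=0$, i.e.\ $P$ does not couple $\mcal{K}^{+}_{\{x\}}$ and $\mcal{K}^{-}_{\{x\}}$. This is precisely what makes the compressions appearing in the second reduction collapse to the plain blocks $d_{11}$ and $d_{22}$, so that the answer matches the clean expressions in the statement rather than carrying extra $d_{12}d_{22}^{-1}d_{21}$-type corrections.

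For $P_{\{x\},\varnothing}$ I would first apply part~$1$ of Lemma~\ref{lem:projection_reduction_formula} with $\mcal{H}=\mcal{K}$, $\mcal{H}_{2}=\mcal{K}^{-}_{\{x\}}$ and $\mcal{L}=P\mcal{K}$ to obtain the projection $Q_{-}$ onto $\mcal{R}_{-}(x;P\mcal{K})$; the required invertibility of $D=d_{22}$ is the condition $\overline{P}\mcal{K}\cap\mcal{K}^{-}_{\{x\}}=\{0\}$, which equals $\Gamma\big(P\mcal{K}\cap\mcal{K}^{+}_{\{x\}}\big)$ and hence vanishes exactly by $(\{x\},\varnothing)$-regularity. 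Reading off the blocks of $Q_{-}=A-BD^{-1}C$ produces the terms $a_{ij}-b_{i2}d_{22}^{-1}c_{2j}$. I would then apply part~$2$ with $\mcal{H}=\big(\mcal{K}^{-}_{\{x\}}\big)^{\perp}$, $\mcal{H}_{1}=\mcal{K}_{\mfrak{X}\backslash\{x\}}$, $\mcal{H}_{2}=\mcal{K}^{+}_{\{x\}}$ and $\mcal{L}=\mcal{R}_{-}(x;P\mcal{K})$; here the invertibility of $1-D$ is the condition $\mcal{R}_{-}(x;P\mcal{K})\cap\mcal{K}^{+}_{\{x\}}=P\mcal{K}\cap\mcal{K}^{+}_{\{x\}}=\{0\}$, again regularity, and by $d_{12}=d_{21}=0$ the compressed block $D$ reduces to $d_{11}$. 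The correction $+B(1-d_{11})^{-1}C$ then contributes the terms $b_{i1}(1-d_{11})^{-1}c_{1j}$, and summing the two contributions gives precisely the stated matrix. The expression for $P_{\varnothing,\{x\}}$ follows by the symmetric argument, interchanging $\mcal{K}^{+}_{\{x\}}\leftrightarrow\mcal{K}^{-}_{\{x\}}$ and $d_{11}\leftrightarrow d_{22}$.

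The genuinely substantive point, and the one I would be most careful about, is the bookkeeping of the four-fold decomposition $\mcal{K}=\mcal{K}^{+}_{\mfrak{X}\backslash\{x\}}\oplus\mcal{K}^{+}_{\{x\}}\oplus\mcal{K}^{-}_{\mfrak{X}\backslash\{x\}}\oplus\mcal{K}^{-}_{\{x\}}$: one must regroup the three surviving summands correctly between the first and second applications of the lemma, and it is only after invoking $d_{12}=d_{21}=0$ that the intermediate compressions simplify to the plain corner blocks. The two invertibility hypotheses are not automatic and must be tied to $(\{x\},\varnothing)$- respectively $(\varnothing,\{x\})$-regularity exactly as above; once these are in place, everything else is routine block multiplication.
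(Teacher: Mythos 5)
Your proposal is correct and takes essentially the same route as the paper's proof: two successive applications of Lemma~\ref{lem:projection_reduction_formula} (part~1 to cut down to $\mcal{R}_{-}(x;P\mcal{K})$, then part~2 to project its image into $\mcal{K}_{\mfrak{X}\backslash\{x\}}$), with the vanishing $d_{12}=d_{21}=0$ making the intermediate compressed block equal to $d_{11}$ and with $(\{x\},\varnothing)$- resp.\ $(\varnothing,\{x\})$-regularity supplying both invertibility hypotheses. The only cosmetic difference is that you check those hypotheses geometrically (via $\overline{P}\mcal{K}\cap\mcal{K}^{-}_{\{x\}}=\Gamma\big(P\mcal{K}\cap\mcal{K}^{+}_{\{x\}}\big)$ and $\mcal{R}_{-}(x;P\mcal{K})\cap\mcal{K}^{+}_{\{x\}}=P\mcal{K}\cap\mcal{K}^{+}_{\{x\}}$), whereas the paper states them as the scalar conditions $d_{22}\neq 0$, $d_{11}\neq 1$ using $d_{11}=1-d_{22}$; the content is identical.
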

\begin{proof}
As we saw in Remark \ref{rem:check_anti-sym_matrix_func}, $P_{12}$ and $P_{21}$ are anti-symmetric; in particular, $d_{12}=d_{21}=0$. We also saw that $P_{11}$ and $P_{22}$ are self-adjoint and $JP_{11}J=1-P_{22}$, which implies $d_{11}=1-d_{22}$.

Let us derive the expression of $P_{\{x\},\varnothing}$. By assumption that $P$ is $(\{x\},\varnothing)$-regular, we have $d_{11}\neq 1$ and $d_{22}\neq 0$. Therefore, the desired expression makes sense. It is convenient to write $P$ as
\begin{equation*}
	P=\left(
	\begin{matrix}
	A & b_{1} & b_{2} \\
	c_{1}^{\mrm{T}} & d_{11} & 0 \\
	c_{2}^{\mrm{T}} & 0 & d_{22}
	\end{matrix}
	\right)
\end{equation*}
according to the direct sum decomposition $\mcal{K}=\mcal{K}_{\mfrak{X}\backslash \{x\}}\oplus \mcal{K}_{\{x\}}^{+}\oplus \mcal{K}_{\{x\}}^{-}$, where
\begin{gather*}
	A =\left(
	\begin{matrix}
		a_{11} & a_{12} \\
		a_{21} & a_{22}
	\end{matrix}
	\right), \qquad
	b_{1} =\left(
	\begin{matrix}
		b_{11} \\
		b_{21}
	\end{matrix}
	\right), \qquad
	b_{2} =\left(
	\begin{matrix}
		b_{12} \\
		b_{22}
	\end{matrix}
	\right), \\
	c_{1}^{\mrm{T}} =\left(
	\begin{matrix}
		c_{11} & c_{12}
	\end{matrix}
	\right), \qquad
	c_{2}^{\mrm{T}} =\left(
	\begin{matrix}
		c_{21} & c_{22}
	\end{matrix}
	\right).
\end{gather*}
Notice that $\mcal{R}_{-}(x;\mcal{K})=\mcal{K}_{\mfrak{X}\backslash\{x\}}\oplus \mcal{K}_{\{x\}}^{+}$.
Since $d_{22}\neq 0$, we can apply the formula (1) in Lemma~\ref{lem:projection_reduction_formula} for $\mcal{H}_{1}=\mcal{R}_{-}(x;\mcal{K})$ and $\mcal{H}_{2}=\mcal{K}_{\{x\}}^{-}$ to express the orthogonal projection onto $\mcal{R}_{-}(x;P\mcal{K})$ in $\mcal{R}_{-}(x;\mcal{K})$ as
\begin{equation*}
	\left(
	\begin{matrix}
	A & b_{1} \\
	c_{1}^{\mrm{T}} & d_{11}
	\end{matrix}
	\right)-
	\left(
	\begin{matrix}
		b_{2} \\
		0
	\end{matrix}
	\right)d_{22}^{-1}\left(
	\begin{matrix}
		c_{2}^{\mrm{T}} & 0
	\end{matrix}
	\right)=
	\left(
	\begin{matrix}
		A-b_{2}d_{22}^{-1}c_{2}^{\mrm{T}} & b_{1} \\
		c_{1}^{\mrm{T}} & d_{11}
	\end{matrix}
	\right)
\end{equation*}
according to the direct sum decomposition $\mcal{R}_{-}(x;\mcal{K})=\mcal{K}_{\mfrak{X}\backslash\{x\}}\oplus \mcal{K}_{\{x\}}^{+}$.
Recall that $P\mcal{K}_{\{x\},\varnothing}$ is the image of $\mcal{R}_{-}(x;P\mcal{K})$ under the projection onto $\mcal{K}_{\mfrak{X}\backslash\{x\}}$.
Now, since $d_{11}\neq 1$, we can apply the formula~(2) of Lemma~\ref{lem:projection_reduction_formula} for $\mcal{H}_{1}=\mcal{K}_{\mfrak{X}\backslash\{x\}}$ and $\mcal{H}_{2}=\mcal{K}_{\{x\}}^{+}$ to express $P_{\{x\},\varnothing}$ as
\begin{equation*}
	P_{\{x\},\varnothing}=A-b_{2}d_{22}^{-1}c_{2}^{\mrm{T}}+b_{1}(1-d_{11})^{-1}c_{1}^{\mrm{T}},
\end{equation*}
which is exactly the desired result for $P_{\{x\},\varnothing}$.
The expression of $P_{\varnothing,\{x\}}$ can also be derived in a~similar way to prove the desired result.
\end{proof}

\subsection{Correlation functions of conditional measures}
Let us proceed to computation of the correlation functions under conditional measures.
Let us take $P\in \mrm{Gr}(\mcal{K},\Gamma)$ that is $(\{x\},\varnothing)$-regular. In this case $M^{P}(C(\{x\},\varnothing))=\mbb{K}^{M^{P}}_{12}(x,x)=(e_{x},P_{22}e_{x})$ is non-zero. Therefore, the conditional measure $M^{P}_{\{x\},\varnothing}$ on $\Omega (\mfrak{X}\backslash\{x\})$ is well-defined. Due to the arguments in Section~\ref{subsect:conditioning_CAR_alg}, a correlation function of $M^{P}_{\{x\},\varnothing}$ is computed as
\begin{gather*}
	\rho^{M^{P}_{\{x\},\varnothing}}(x_{1},\dots, x_{n}) =\frac{\varphi_{P}(a_{x_{1}}^{\ast}\cdots a_{x_{n}}^{\ast}a_{x_{n}}\cdots a_{x_{1}}a_{x}^{\ast}a_{x})}{\varphi_{P}(a_{x}^{\ast}a_{x})}
	 =\frac{\pf \left[\mbb{K}_{P}(x_{i},x_{j})\right]_{0\le i,j\le n}}{\mbb{K}^{M^{P}}_{12}(x,x)},
\end{gather*}
where we set $x_{0}:=x$ and $x_{1},\dots, x_{n}\in \mfrak{X}\backslash \{x\}$ are distinct.
It is standard to express the above quantity in terms of the Pfaffian of a matrix of smaller size.
In fact, by means of the formula $\pf (B^{\mrm{T}}AB)=(\det B)(\pf A)$ for an anti-symmetric matrix $A$ and a matrix $B$, we can see that
\begin{equation*}
	\rho^{M^{P}_{\{x\},\varnothing}}(x_{1},\dots, x_{n})=\pf \big[\widetilde{\mbb{K}}(x_{i},x_{j})\big]_{1\le i,j\le n},
\end{equation*}
where
\begin{gather*}
\widetilde{\mbb{K}}_{11}(y,z)=\mbb{K}^{M^{P}}_{11}(y,z)-\frac{\mbb{K}^{M^{P}}_{12}(y,x)\mbb{K}^{M^{P}}_{11}(x,z)}{\mbb{K}^{M^{P}}_{12}(x,x)}+\frac{\mbb{K}^{M^{P}}_{11}(y,x)\mbb{K}^{M^{P}}_{12}(x,z)}{\mbb{K}^{M^{P}}_{12}(x,x)}, \\
\widetilde{\mbb{K}}_{12}(y,z)=\mbb{K}^{M^{P}}_{12}(y,z)-\frac{\mbb{K}^{M^{P}}_{12}(y,x)\mbb{K}^{M^{P}}_{12}(x,z)}{\mbb{K}^{M^{P}}_{12}(x,x)}+\frac{\mbb{K}^{M^{P}}_{11}(y,x)\mbb{K}^{M^{P}}_{22}(x,z)}{\mbb{K}^{M^{P}}_{12}(x,x)}, \\
\widetilde{\mbb{K}}_{22}(y,z)=\mbb{K}^{M^{P}}_{22}(y,z)-\frac{\mbb{K}^{M^{P}}_{22}(y,x)\mbb{K}^{M^{P}}_{12}(x,z)}{\mbb{K}^{M^{P}}_{12}(x,x)}+\frac{\mbb{K}^{M^{P}}_{12}(y,x)\mbb{K}^{M^{P}}_{22}(x,z)}{\mbb{K}^{M^{P}}_{12}(x,x)},
\end{gather*}
for $y,z\in\mfrak{X}\backslash\{x\}$. Note that the other component $\widetilde{\mbb{K}}_{21}(y,z)$ is determined by the anti-symmetry $\widetilde{\mbb{K}}(y,z)^{\mrm{T}}=-\widetilde{\mbb{K}}(z,y)$.

We next assume that $P\in \mrm{Gr}(\mcal{K},\Gamma)$ is $(\varnothing,\{x\})$-regular. Then, $M^{P}(C(\varnothing,\{x\}))=1-\mbb{K}^{M^{P}}_{12}(x,x)=(e_{x},P_{11}e_{y})$ is non-zero. Therefore, the conditional measure $M^{P}_{\varnothing,\{x\}}$ on $\Omega (\mfrak{X}\backslash\{x\})$ is well-defined. A correlation function is computed as
\begin{gather*}
	\rho^{M^{P}_{\varnothing,\{x\}}}(x_{1},\dots, x_{n}) =\frac{\varphi_{P}(a_{x_{1}}^{\ast}\cdots a_{x_{n}}^{\ast}a_{x_{n}}\cdots a_{x_{1}}a_{x}a^{\ast}_{x})}{\varphi_{P}(a_{x}a^{\ast}_{x})}
	 =\frac{\pf [\pr{\mbb{K}}_{P}(x_{i},x_{j}) ]_{0\le i,j\le n}}{1-\mbb{K}^{M^{P}}_{12}(x,x)},
\end{gather*}
where we set $x_{0}:=x$, $x_{1},\dots, x_{n}\in \mfrak{X}\backslash\{x\}$ are distinct and
\begin{gather*}
	\pr{\mbb{K}}_{P}(x_{i},x_{j}):=
	\begin{cases}
		\mbb{K}_{P}(x_{i},x_{j}), & 1\le i,j\le n, \\
		\left(
		\begin{matrix}
			(e_{x},P_{11}e_{x_{j}}) & (e_{x},P_{12}e_{x_{j}}) \\
			(e_{x}, P_{21}e_{x_{j}}) & (e_{x},P_{22}e_{x_{j}})
		\end{matrix}
		\right), & i=0,\quad 1\le j\le n, \vspace{1mm}\\
		\left(
		\begin{matrix}
			(e_{x_{i}},P_{22}e_{x}) & (e_{x_{i}},P_{21}e_{x}) \\
			(e_{x_{i}}, P_{12}e_{x}) & (e_{x_{i}},P_{11}e_{x})
		\end{matrix}
		\right), & 1\le i \le n,\quad j=0, \vspace{1mm}\\
		\left(
		\begin{matrix}
			0 & (e_{x},P_{11}e_{x}) \\
			(e_{x}, (P_{22}-1)e_{x}) & 0
		\end{matrix}
		\right), & i=j=0.
	\end{cases}
\end{gather*}
Again, we write the correlation function in terms of the Pfaffian of a $(2n\times 2n)$-anti-symmetric matrix. Consequently, we obtain
\begin{equation*}
	\rho^{M^{P}_{\varnothing,\{x\}}}(x_{1},\dots, x_{n})=\pf\big[\what{\mbb{K}}(x_{i},x_{j})\big]_{1\le i,j\le n},
\end{equation*}
where
\begin{gather*}
\what{\mbb{K}}_{11}(y,z) =\mbb{K}^{M^{P}}_{11}(y,z)-\frac{\mbb{K}^{M^{P}}_{11}(y,x)\mbb{K}^{M^{P}}_{12}(x,z)}{1-\mbb{K}^{M^{P}}_{12}(x,x)}+\frac{\mbb{K}^{M^{P}}_{12}(y,x)\mbb{K}^{M^{P}}_{11}(x,z)}{1-\mbb{K}^{M^{P}}_{12}(x,x)}, \\
\what{\mbb{K}}_{12}(y,z) =\mbb{K}^{M^{P}}_{12}(y,z)-\frac{\mbb{K}^{M^{P}}_{11}(y,x)\mbb{K}^{M^{P}}_{22}(x,z)}{1-\mbb{K}^{M^{P}}_{12}(x,x)}+\frac{\mbb{K}^{M^{P}}_{12}(y,x)\mbb{K}^{M^{P}}_{12}(x,z)}{1-\mbb{K}^{M^{P}}_{12}(x,x)}, \\
\what{\mbb{K}}_{22}(y,z) =\mbb{K}^{M^{P}}_{22}(y,z)-\frac{\mbb{K}^{M^{P}}_{12}(y,x)\mbb{K}^{M^{P}}_{22}(x,z)}{1-\mbb{K}^{M^{P}}_{12}(x,x)}+\frac{\mbb{K}^{M^{P}}_{22}(y,x)\mbb{K}^{M^{P}}_{12}(x,z)}{1-\mbb{K}^{M^{P}}_{12}(x,x)}
\end{gather*}
for $y,z\in\mfrak{X}\backslash\{x\}$, and the other component is determined by the anti-symmetry $\what{\mbb{K}}(y,z)^{\mrm{T}}=-\what{\mbb{K}}(z,y)$.

\subsection{Proof of Theorem \ref{thm:conditional_projection}}
It remains to show that $\widetilde{\mbb{K}}(y,z)=\mbb{K}_{P_{\{x\},\varnothing}}(y,z)$ and $\what{\mbb{K}}(y,z)=\mbb{K}_{P_{\varnothing,\{x\}}}(y,z)$, $y,z\in \mfrak{X}\backslash\{x\}$.
It is straightforward that
\begin{gather*}
\widetilde{\mbb{K}}_{11}(y,z)=(e_{y},P_{21}e_{z})-\frac{(e_{y},P_{22}e_{x})(e_{x},P_{21}e_{z})}{(e_{x},P_{22}e_{x})}+\frac{(e_{y},P_{21}e_{x})(e_{x},P_{11}e_{z})}{(e_{x},(1-P_{11})e_{x})}, \\
\widetilde{\mbb{K}}_{12}(y,z)=(e_{y},P_{22}e_{z})-\frac{(e_{y},P_{22}e_{x})(e_{x},P_{22}e_{z})}{(e_{x},P_{22}e_{x})}+\frac{(e_{y},P_{21}e_{x})(e_{x},P_{12}e_{z})}{(e_{x},(1-P_{11})e_{x})}, \\
\widetilde{\mbb{K}}_{22}(y,z)=(e_{y},P_{12}e_{z})-\frac{(e_{y},P_{12}e_{x})(e_{x},P_{22}e_{z})}{(e_{x},P_{22}e_{x})}+\frac{(e_{y},P_{11}e_{x})(e_{x},P_{12}e_{z})}{(e_{x},(1-P_{11})e_{x})}.
\end{gather*}
Comparing these descriptions with the result of Proposition~\ref{prop:expression_cond_proj}, we conclude that $\widetilde{\mbb{K}}(y,z)=\mbb{K}_{P_{\{x\},\varnothing}}(y,z)$, $y,z\in\mfrak{X}\backslash\{x\}$.
It also follows from
\begin{gather*}
\what{\mbb{K}}_{11}(y,z)=(e_{y},P_{21}e_{z})-\frac{(e_{y}P_{21}e_{x})(e_{x},P_{11}e_{z})}{(e_{x},P_{11}e_{x})}+\frac{(e_{y},P_{22}e_{x})(e_{x},P_{21}e_{z})}{(e_{x},(1-P_{22})e_{x})}, \\
\what{\mbb{K}}_{12}(y,z)=(e_{y},P_{22}e_{z})-\frac{(e_{y}P_{21}e_{x})(e_{x},P_{12}e_{z})}{(e_{x},P_{11}e_{x})}+\frac{(e_{y},P_{22}e_{x})(e_{x},P_{22}e_{z})}{(e_{x},(1-P_{22})e_{x})}, \\
\what{\mbb{K}}_{22}(y,z)=(e_{y},P_{12}e_{z})-\frac{(e_{y}P_{11}e_{x})(e_{x},P_{12}e_{z})}{(e_{x},P_{11}e_{x})}+\frac{(e_{y},P_{12}e_{x})(e_{x},P_{22}e_{z})}{(e_{x},(1-P_{22})e_{x})}
\end{gather*}
and Proposition \ref{prop:expression_cond_proj} that the other desired coincidence $\what{\mbb{K}}(y,z)=\mbb{K}_{P_{\varnothing,\{x\}}}(y,z)$, $y,z\in \mfrak{X}\backslash\{x\}$ holds.

\appendix

\section{Shifted Schur measures}\label{sect:shifed_Schur_measures}
The shifted Schur measures were introduced in \cite{TracyWidom2004} associated with the Schur $Q$-functions. It follows from the result in \cite{Matsumoto2005} that they are defined in terms of quasi-free states of a CAR algebra, as we overview in this appendix. Note that a free fermionic approach to shifted Schur measures has also been proposed in~\cite{WangLi2019} relying on a different algebra from the one adopted in~\cite{Matsumoto2005} and here.

\subsection{Definition of shifted Schur measures}
The notations regarding symmetric functions are inherited from Sestion~\ref{subsect:Schur_measures}.
A partition $\lambda=(\lambda_{1}\ge\lambda_{2}\ge\cdots)\in\mbb{Y}$ is said to be strict if $\lambda_{1}>\lambda_{2}>\cdots$.
We write $\mbb{D}$ for the collection of strict partitions. We do not contain here a definition of the Schur $Q$-functions (see \cite[Chapter~III, Section~8]{Macdonald1999}), but just say that, for each strict partition $\lambda\in \mbb{D}$, the Schur $Q$-function $Q_{\lambda}\in\Lambda$ is defined as the Macdonald symmetric function at the parameter $(q,t)=(0,-1)$ and the Schur $P$-function is $P_{\lambda}=2^{-\ell(\lambda)}Q_{\lambda}$, where $\ell(\lambda)$ is the length of the partition. Another significant property of the Schur $Q$-functions is that, when we write $\Lambda^{\mrm{odd}}$ for the subring generated by power-sum symmetric functions $p_{n}$, $n=1,3,5,\dots$ of odd degree, then $Q_{\lambda}$, $\lambda\in\mbb{D}$ form a basis of $\Lambda^{\mrm{odd}}$.

Positive specializations of the Schur $Q$-functions were classified in \cite{Ivanov1999, Nazarov1988}. Let $\mbb{T}_{Q}$ be the collection of data $\rho=(\alpha_{1}\ge \alpha_{2}\ge\cdots \ge 0)$ such that $\sum_{j\ge 1}\alpha_{j}\le 1$. Associated to such data~$\rho$, we define an algebraic homomorphism $\tau_{\rho}\colon \Lambda^{\mrm{odd}}\to\mbb{C}$ by $\tau_{\rho}(p_{1})=1$ and $\tau_{\rho}(p_{n})=\sum_{g\ge 1}\alpha_{j\ge 1}^{n}$ for $n=3,5,\dots$. As usual, for a symmetric function $F\in \Lambda^{\mrm{odd}}$ and $\rho\in \mbb{T}_{Q}$, we write the specialization as $F(\rho):=\tau_{\rho}(F)$. Then, it is known that $Q_{\lambda}(\rho)\ge 0$ for all $\lambda\in\mbb{D}$.

Let $\mfrak{X}=\mbb{N}=\{1,2,\dots\}$. Then, a strict partition $\lambda\in\mbb{D}$ gives a subset $\{\lambda_{i}\}_{i=1}^{\ell(\lambda)}\subset\mfrak{X}$, inducing a bijection $\mbb{D}\to \Omega:=\Omega(\mfrak{X})$. We consider the subset $\mbb{T}_{Q}^{\circ}$ of $\mbb{T}_{Q}$ that consists of data such that $\alpha_{1}<1$. Equivalently, it is $\mbb{T}_{Q}^{\circ}=\mbb{T}_{Q}\backslash\{(1,0,\dots)\}$.
For $\rho\in\mbb{T}_{Q}^{\circ}$, the associated shifted Schur measure $M_{Q(\rho)}$ is the probability measure on $(\Omega,\Sigma)$ defined by
\begin{equation*}
	M_{Q(\rho)}(\lambda)\propto Q_{\lambda}(\rho)P_{\lambda}(\rho),\qquad \lambda\in\mbb{D}\simeq \Omega.
\end{equation*}
Note that, in general, the specializations for $Q_{\lambda}$ and $P_{\lambda}$ can be different. In this sense, we focus on special cases of shifted Schur measures. Some particular examples related to projective characters of the infinite symmetric group have been studied in \cite{Borodin1999,Petrov2010,Petrov2011}.

\subsection{CAR algebra formalism}
Recall that we are working on $\mfrak{X}=\mbb{N}$. Let $P_{0}\in \mrm{Gr}(\mcal{K},\Gamma)$ be the projection defined by (\ref{eq:projection_vacuum}) and consider the corresponding Fock representation $\big(\pi_{P_{0}},\mcal{F}\big(\ell^{2}(\mfrak{X})\big)\big)$.

We write $S(-I)$ for the second quantization of $-I\in\mfrak{U}\big(\ell^{2}(\mfrak{X})\big)$ (see \cite[Section~5.2.1]{BratteliRobinson1997}).
For a positive odd integer $n$, we define
\begin{equation*}
h_{n}:=\sum_{j=1}^{\infty}\pi_{P_{0}}(a_{j}^{\ast}a_{n+j})+\frac{1}{\sqrt{2}}S(-I)\pi_{P_{0}}(a_{n})+\sum_{j=1}^{\frac{n-1}{2}}(-1)^{j}\pi_{P_{0}}(a_{j}a_{n-j})
\end{equation*}
and
\begin{equation*}
h_{-n}:=h_{n}^{\ast}=\sum_{j=1}^{\infty}\pi_{P_{0}}(a^{\ast}_{n+j}a_{j})+\frac{1}{\sqrt{2}}\pi_{P_{0}}(a_{n}^{\ast})S(-I)+\sum_{j=1}^{\frac{n-1}{2}}(-1)^{j}\pi_{P_{0}}(a^{\ast}_{n-j}a^{\ast}_{j}).
\end{equation*}
Then, these operators exhibit the Heisenberg commutation relations $[h_{m},h_{n}]=\frac{m}{2}\delta_{m+n,0}$.

Let $\rho\in\mbb{T}_{Q}^{\circ}$ and set
\begin{equation*}
	\Xi_{\pm}(\rho):=\exp\left(\sum_{n\in 2\mbb{Z}_{\ge 0}+1}\frac{p_{n}(\rho)}{n}h_{\pm n}\right).
\end{equation*}
Then, it is obvious that $\Xi_{\pm}(\rho)^{\ast}=\Xi_{\mp}(\rho)$.
The verification of these operators goes in a similar manner as the one in Section~\ref{subsect:Schur_measures} for Schur measures.
The following theorem is an immediate consequence of~\cite{Matsumoto2005}.

\begin{Theorem}For $\rho\in\mbb{T}_{Q}^{\circ}$, the functional $\varphi_{Q(\rho)}$ defined by
\begin{equation*}
\varphi_{Q(\rho)}(A):=\frac{(\Xi_{-}(\rho)\bm{1},\pi_{P_{0}}(A)\Xi_{-}(\rho)\bm{1})_{\mcal{F}(\ell^{2} (\mfrak{X}))}}{\|\Xi_{-}(\rho)\bm{1}\|^{2}},\qquad A\in \mcal{C}(\mcal{K},\Gamma)
\end{equation*}
is a quasi-free state over $\mcal{C}(\mcal{K},\Gamma)$ and
\begin{equation*}
	\varphi_{Q(\rho)}(f)=\int_{\Omega}f(\omega)M_{Q(\rho)}({\rm d}\omega),\qquad f\in C(\Omega).
\end{equation*}
\end{Theorem}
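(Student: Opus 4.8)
The plan is to mirror the treatment of ordinary Schur measures in Proposition~\ref{prop:Schur_measure_quasifree} and in the proof of Theorem~\ref{thm:perfectness_Schur}, separating the statement into the quasi-freeness of $\varphi_{Q(\rho)}$ and the identification of its restriction to $C(\Omega)$ with $M_{Q(\rho)}$; the only genuinely new ingredient is a generating identity for $\Xi_-(\rho)\bm{1}$ in terms of Schur $Q$-functions, which I would import from~\cite{Matsumoto2005}. First I would record that $\Xi_{\pm}(\rho)$ are well-defined on the relevant dense domain for $\rho\in\mbb{T}_{Q}^{\circ}$, verified exactly as in Section~\ref{subsect:Schur_measures} by grading the Fock space with an energy operator and estimating $\|\Xi_-(\rho)\bm{1}\|$. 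Since $h_n\bm{1}=0$ for $n>0$, one has $\Xi_+(\rho)\bm{1}=\bm{1}$, and the Baker--Campbell--Hausdorff identity for the Heisenberg relations $[h_m,h_n]=\tfrac{m}{2}\delta_{m+n,0}$ produces the commutation relation $\Xi_+(\rho)\Xi_-(\rho)=\|\Xi_-(\rho)\bm{1}\|^{2}\,\Xi_-(\rho)\Xi_+(\rho)$ and identifies $\|\Xi_-(\rho)\bm{1}\|^{2}$ with an explicit exponential of the odd power sums $p_n(\rho)$, equal to the partition function $\sum_{\lambda\in\mbb{D}}Q_{\lambda}(\rho)P_{\lambda}(\rho)$.

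For quasi-freeness I would argue as in Proposition~\ref{prop:Schur_measure_quasifree}. Writing $\widetilde{\Xi}_{\pm}(\rho):=\exp\big(-\sum_{n\in 2\mbb{Z}_{\ge 0}+1}\tfrac{p_n(\rho)}{n}h_{\pm n}\big)$ and checking $\Xi_{\pm}(\rho)^{-1}=\widetilde{\Xi}_{\pm}(\rho)$ on the dense domain, the commutation relation above together with $\Xi_+(\rho)^{-1}\bm{1}=\bm{1}$ lets me rewrite
\begin{equation*}
	\varphi_{Q(\rho)}(A)=\big(\bm{1},\pi_{P_0(\rho)}(A)\bm{1}\big)_{\mcal{F}(\ell^{2}(\mfrak{X}))},\qquad \pi_{P_0(\rho)}(A):=\Xi_+(\rho)\Xi_-(\rho)^{-1}\pi_{P_0}(A)\Xi_-(\rho)\Xi_+(\rho)^{-1}.
\end{equation*}
The assignment $\pi_{P_0(\rho)}$ is the conjugate of the Fock representation $\pi_{P_0}$ by the invertible operator $\Xi_-(\rho)\Xi_+(\rho)^{-1}$, hence again a representation of $\mcal{C}(\mcal{K},\Gamma)$. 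Conjugation by the exponentials of the quadratic operators $h_{\pm n}$ carries each field $\pi_{P_0}(B(f))$ to $\pi_{P_0}(B(Vf))$ for a (generally non-unitary) linear transformation $V$ of $\mcal{K}$, so that $\varphi_{Q(\rho)}(B(f_1)\cdots B(f_k))=\varphi_{P_0}(B(Vf_1)\cdots B(Vf_k))$; as this is a multilinear identity and the Fock state $\varphi_{P_0}$ obeys Wick's formula with a Pfaffian built from two-point functions alone, $\varphi_{Q(\rho)}$ is quasi-free.

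The measure identification is where the new content lies. The key input from~\cite{Matsumoto2005} is the generating identity
\begin{equation*}
	\Xi_-(\rho)\bm{1}=\sum_{\lambda\in\mbb{D}}2^{-\ell(\lambda)/2}Q_{\lambda}(\rho)\,e_{\mbb{M}(\lambda)},
\end{equation*}
the analogue of~\eqref{eq:generating_Schur}, in which the factor $2^{-\ell(\lambda)/2}$ encodes the neutral-fermion normalization responsible for the $S(-I)$ and $\tfrac{1}{\sqrt{2}}$ terms in $h_{\pm n}$ and reproduces $P_{\lambda}=2^{-\ell(\lambda)}Q_{\lambda}$. Because $Q_{\lambda}(\rho)\ge 0$ for $\rho\in\mbb{T}_{Q}^{\circ}$, normalizing gives
\begin{equation*}
	\frac{1}{\|\Xi_-(\rho)\bm{1}\|}\Xi_-(\rho)\bm{1}=\sum_{\lambda\in\mbb{D}}M_{Q(\rho)}(\lambda)^{1/2}\,e_{\mbb{M}(\lambda)}
\end{equation*}
with all coefficients non-negative, exactly as in the proof of Theorem~\ref{thm:perfectness_Schur}. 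Finally, for $F\in C(\Omega)$ the operator $\pi_{P_0}(F)$ acts diagonally, $\pi_{P_0}(F)e_{\omega}=F(\omega)e_{\omega}$, as recorded in the proof of Proposition~\ref{prop:perfectness_warmup}, so only the diagonal contributes and
\begin{equation*}
	\varphi_{Q(\rho)}(F)=\sum_{\lambda\in\mbb{D}}M_{Q(\rho)}(\lambda)F(\mbb{M}(\lambda))=\int_{\Omega}F(\omega)M_{Q(\rho)}({\rm d}\omega).
\end{equation*}

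The main obstacle is the generating identity itself: establishing that the neutral-fermion vertex operator $\Xi_-(\rho)$, with its quadratic creation terms and the linear $S(-I)$-contribution, acts on the vacuum to produce precisely the Schur $Q$-functions weighted by $2^{-\ell(\lambda)/2}$. I would regard this as the substance of~\cite{Matsumoto2005} rather than reprove it, but would take care to reconcile conventions so that the $P$- versus $Q$-normalization and the labelling of $\mfrak{X}=\mbb{N}$ by strict partitions land correctly against the definition of $M_{Q(\rho)}$.
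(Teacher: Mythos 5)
Your route is the same as the paper's: well-definedness of $\Xi_{\pm}(\rho)$ checked as in Section~\ref{subsect:Schur_measures}, the expansion $\Xi_{-}(\rho)\bm{1}=\sum_{\lambda\in\mbb{D}}2^{-\ell(\lambda)/2}Q_{\lambda}(\rho)e_{\lambda}$ imported from~\cite{Matsumoto2005}, and the identification of $\varphi_{Q(\rho)}|_{C(\Omega)}$ with $M_{Q(\rho)}$ via the diagonal action of $C(\Omega)$ on the basis $\{e_{\lambda}\}_{\lambda\in\mbb{D}}$. Those steps are correct and coincide with the paper's outline of proof. The genuine gap is in your argument for quasi-freeness.

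You call the $h_{\pm n}$ ``quadratic operators'' and claim that conjugation by $\Xi_{\pm}(\rho)$ carries each field $\pi_{P_{0}}(B(f))$ to $\pi_{P_{0}}(B(Vf))$ for some linear map $V$ on $\mcal{K}$. That is what happens for the Schur measures, where the $h_{n}$ are genuinely quadratic in the fields, but it fails here: $h_{n}$ contains the term $\frac{1}{\sqrt{2}}S(-I)\pi_{P_{0}}(a_{n})$, linear in the fields and dressed by the Klein factor $S(-I)$. Since $[S(-I)\pi_{P_{0}}(a_{n}),\pi_{P_{0}}(a_{m}^{\ast})]=\delta_{n,m}S(-I)$, which is neither a scalar nor a field, and iterated commutators keep producing elements of the form $S(-I)$ times a field, conjugation by $\Xi_{\pm}(\rho)$ maps the span of the fields into the strictly larger algebra generated by the fields together with $S(-I)$; the identity $\varphi_{Q(\rho)}(B(f_{1})\cdots B(f_{k}))=\varphi_{P_{0}}(B(Vf_{1})\cdots B(Vf_{k}))$ on which your Wick argument rests is therefore unavailable. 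Moreover, the step cannot be patched: your claimed identity would force all odd moments of $\varphi_{Q(\rho)}$ to vanish (because $\varphi_{P_{0}}$ is quasi-free), whereas $h_{-n}\bm{1}=\frac{1}{\sqrt{2}}e_{n}+\cdots$ shows that $\Xi_{-}(\rho)\bm{1}$ has non-zero components of both parities (strict partitions of odd length, e.g.\ $\lambda=(1)$, occur in the expansion), and already for the trivial specialization, where $\Xi_{-}(\rho)=\exp(h_{-1})$, pairing the $\lambda=\varnothing$ component against the $\lambda=(1)$ component gives $\varphi_{Q(\rho)}(a_{1})\neq 0$. This Klein-factor/parity phenomenon is exactly where the shifted case departs from Proposition~\ref{prop:Schur_measure_quasifree}, and it is why the paper presents the entire Wick/Pfaffian structure --- not merely the expansion of $\Xi_{-}(\rho)\bm{1}$ --- as a consequence of~\cite{Matsumoto2005}, where the calculus is carried out in the neutral-fermion algebra containing $S(-I)$ as a generator (there the $h_{n}$ really are quadratic). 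A self-contained proof would have to work in that enlarged algebra and then restrict to the gauge-invariant part of $\mcal{C}(\mcal{K},\Gamma)$; only the measure-identification step, which you do handle correctly, transfers verbatim from the Schur case.
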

\begin{proof}[Outline of proof]
A significant observation is as follows:
recall that the Fock space $\mcal{F}\big(\ell^{2}(\mfrak{X})\big)$ admits a complete orthonormal system
\begin{equation*}
	e_{\lambda}=e_{\lambda_{1}}\wedge\cdots \wedge e_{\lambda_{\ell}},\qquad \lambda=(\lambda_{1}>\cdots>\lambda_{\ell})\in\mbb{D}.
\end{equation*}
We have \cite[Proposition~3.2]{Matsumoto2005}
\begin{equation}\label{eq:expansion_vacuum_shifted_Schur}
	\Xi_{-}(\rho)\bm{1}=\sum_{\lambda\in\mbb{D}}2^{-\ell(\lambda)/2}Q_{\lambda}(\rho)e_{\lambda}.
\end{equation}
The remaining part of the proof is similar to that of Proposition~\ref{prop:Schur_measure_quasifree} for the Schur measures.
\end{proof}

From the definition of $M_{Q(\rho)}$ and the expansion~(\ref{eq:expansion_vacuum_shifted_Schur}), we have
\begin{equation*}
	\frac{1}{\|\Xi_{-}(\rho)\bm{1}\|}\Xi_{-}(\rho)\bm{1}=\sum_{\lambda\in\mbb{D}}M_{Q(\rho)}(\lambda)^{1/2}e_{\lambda}.
\end{equation*}
Therefore, relying on a similar argument as the proof of Theorem~\ref{thm:perfectness_Schur}, we have the following result.
\begin{Theorem}
Assume that a shifted Schur measure $M_{Q(\rho)}$ is $\mfrak{S}$-quasi-invariant. Then it is perfect.
\end{Theorem}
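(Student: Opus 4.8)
The plan is to reuse wholesale the mechanism established in the proof of Theorem~\ref{thm:perfectness_Schur}, taking $\varphi_{Q(\rho)}$ as the candidate quasi-free state. By the preceding theorem we already know that $\varphi_{Q(\rho)}$ is quasi-free over $\mcal{C}(\mcal{K},\Gamma)$ and that its restriction to $C(\Omega)$ reproduces integration against $M_{Q(\rho)}$. Since $M_{Q(\rho)}$ is assumed $\mfrak{S}$-quasi-invariant, the Koopman-type construction of Section~\ref{sect:measre_to_state} furnishes the representation $\mcal{T}^{M_{Q(\rho)}}$ of $\mcal{A}(\mcal{K},\Gamma)$ on $L^{2}(\Omega,M_{Q(\rho)})$ and hence the state $\varphi^{M_{Q(\rho)}}$; the goal is the identity $\varphi^{M_{Q(\rho)}}=\varphi_{Q(\rho)}|_{\mcal{A}(\mcal{K},\Gamma)}$, which is precisely perfectness.

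First I would record the key positivity input. Combining the definition $M_{Q(\rho)}(\lambda)\propto Q_{\lambda}(\rho)P_{\lambda}(\rho)=2^{-\ell(\lambda)}Q_{\lambda}(\rho)^{2}$ with the expansion~(\ref{eq:expansion_vacuum_shifted_Schur}) and the Schur-$Q$-positivity $Q_{\lambda}(\rho)\ge 0$ valid for $\rho\in\mbb{T}_{Q}^{\circ}$, the coefficients $2^{-\ell(\lambda)/2}Q_{\lambda}(\rho)$ in $\Xi_{-}(\rho)\bm{1}$ are nonnegative and proportional to $M_{Q(\rho)}(\lambda)^{1/2}$, so that the normalized cyclic vector is $\|\Xi_{-}(\rho)\bm{1}\|^{-1}\Xi_{-}(\rho)\bm{1}=\sum_{\lambda\in\mbb{D}}M_{Q(\rho)}(\lambda)^{1/2}e_{\lambda}$ with all coefficients real and nonnegative. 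This is the shifted-Schur analogue of the hypothesis $\det(v_{i}(x_{j}))\ge 0$ imposed by hand in Proposition~\ref{prop:perfectness_warmup}; here it comes for free from the classification of positive specializations of the Schur $Q$-functions.

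Next I would build the isometry $\iota\colon L^{2}(\Omega,M_{Q(\rho)})\hookrightarrow\mcal{F}(\ell^{2}(\mfrak{X}))$ by $\delta_{\lambda}\mapsto M_{Q(\rho)}(\lambda)^{1/2}e_{\lambda}$, which by the previous display sends the unit constant function $\mbb{I}$ to the normalized cyclic vector of $\varphi_{Q(\rho)}$ (recall that for $\mfrak{X}=\mbb{N}$ the strict partitions $\mbb{D}$ index the finite configurations, so the basis $\{e_{\lambda}\}_{\lambda\in\mbb{D}}$ coincides with the $\{e_{\omega}\}_{\omega\in\Omega^{\circ}}$ used in Proposition~\ref{prop:perfectness_warmup}). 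It then remains to verify that $\iota$ intertwines $\mcal{T}^{M_{Q(\rho)}}$ and $\pi_{P_{0}}$, i.e.\ $\iota\circ\mcal{T}^{M_{Q(\rho)}}(A)=\pi_{P_{0}}(A)\circ\iota$ for $A\in\mcal{A}(\mcal{K},\Gamma)$. Since $\mcal{A}(\mcal{K},\Gamma)$ is generated by the images $p(\varepsilon_{x})$ and $p(s_{x,y})$, it suffices to check the intertwining on these, which is exactly the computation carried out in Proposition~\ref{prop:perfectness_warmup}: one has $\pi_{P_{0}}(F)e_{\omega}=F(\omega)e_{\omega}$ for $F\in C(\Omega)$ and, crucially, $\pi_{P_{0}}(p(g))e_{\omega}=e_{g(\omega)}$ with no sign factor, for $g\in\mfrak{S}$. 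Matching this against the Koopman action, whose permutation part carries the positive cocycle factor $({}^{g}M_{Q(\rho)}/M_{Q(\rho)})^{1/2}$, the two sides agree precisely because the expansion coefficients are nonnegative, and the identity $\varphi^{M_{Q(\rho)}}=\varphi_{Q(\rho)}|_{\mcal{A}(\mcal{K},\Gamma)}$ follows.

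A remark on where the genuine work lies. The only structural difference from the untwisted Schur case is in the Heisenberg generators $h_{n}$, which now involve the second quantization $S(-I)$ and the quadratic terms reflecting the Schur $Q$-function structure; but these enter the argument only through the already-proved facts that $\varphi_{Q(\rho)}$ is quasi-free and that the vacuum expands as in~(\ref{eq:expansion_vacuum_shifted_Schur}). Consequently the sign/phase bookkeeping in the intertwining — ensuring that $\pi_{P_{0}}(p(s_{x,y}))$ produces $+e_{s_{x,y}(\omega)}$ rather than $-e_{s_{x,y}(\omega)}$, which is the delicate point of the whole scheme — is identical to the Schur case and requires no new input. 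I therefore expect the main obstacle, such as it is, to be purely a matter of confirming compatibility: that the nonnegativity of the coefficients in~(\ref{eq:expansion_vacuum_shifted_Schur}) matches the positive square-root cocycle of the Koopman construction, a point that reduces entirely to the positivity $Q_{\lambda}(\rho)\ge 0$ already noted.
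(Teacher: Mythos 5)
Your proposal is correct and follows essentially the same route as the paper: the paper's own proof consists precisely of observing that $\|\Xi_{-}(\rho)\bm{1}\|^{-1}\Xi_{-}(\rho)\bm{1}=\sum_{\lambda\in\mbb{D}}M_{Q(\rho)}(\lambda)^{1/2}e_{\lambda}$ with non-negative coefficients (via $P_{\lambda}=2^{-\ell(\lambda)}Q_{\lambda}$ and $Q_{\lambda}(\rho)\ge 0$) and then invoking the isometry-and-intertwining argument of Theorem~\ref{thm:perfectness_Schur}, which in turn reduces to the computation in Proposition~\ref{prop:perfectness_warmup}. You have merely written out explicitly the steps the paper cites by reference, including the correct observation that the intertwining check on $p(\varepsilon_{x})$ and $p(s_{x,y})$ requires no new input beyond the quasi-freeness of $\varphi_{Q(\rho)}$ and the expansion~(\ref{eq:expansion_vacuum_shifted_Schur}).
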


\subsection*{Acknowledgements}

The author is grateful to Makoto Katori, Tomoyuki Shirai and Sho Matsumoto for discussions and comments on the manuscript.
He also thanks the anonymous referees for their useful suggestions for improvements of the manuscript.
This work was supported by the Grant-in-Aid for JSPS Fellows (No.~19J01279).

\pdfbookmark[1]{References}{ref}
\LastPageEnding

\end{document}